\DeclareMathOperator{\dist}{dist}
\DeclareMathOperator{\diam}{diam}
\DeclareMathOperator{\supp}{supp}
\newtheorem{lemma}{Lemma}
\newtheorem{theorem}{Theorem}
\newtheorem{corollary}{Corollary}
\newtheorem{remark}{Remark}
\title{Error estimates for variational normal derivatives and Dirichlet control problems with energy regularization}
\author{Max Winkler
\thanks{Technische Universit\"at Chemnitz, Faculty of Mathematics,
  Professorship Numerical Mathematics (Partial Differential Equations)}}
\begin{document}

\maketitle

\begin{abstract}
  This article deals with error estimates for the finite element approximation
  of variational normal derivatives and, as a consequence, error estimates 
  for the finite element approximation of Dirichlet boundary control problems 
  with energy regularization. The regularity of the solution is
  carefully carved out exploiting weighted Sobolev and H\"older spaces.
  This allows to derive a sharp relation between the convergence rates for the
  approximation and the structure of the geometry, more precisely, the largest opening angle at the
  vertices of polygonal domains. 
  Numerical experiments confirm that the derived convergence rates are sharp.
\end{abstract}

\section{Introduction}

The problem investigated in this article is the optimal Dirichlet control
problem
\begin{equation}\label{eq:target_funct}  
 \min_{z\in H^{1/2}(\Gamma)} \left\lbrace \frac12\,\|u(z)-u_d\|_{L^2(\Omega)}^2 +
 \frac{\nu}2\, |z|_{H^{1/2}(\Gamma)}^2\right\rbrace,
\end{equation}
where $u(z)\in H^1(\Omega)$ is the solution of the boundary value problem
\begin{equation}\label{eq:state}
-\Delta u = 0 \ \mbox{in}\ \Omega,\qquad u=z\ \mbox{on}\ \Gamma.
\end{equation}
The domain $\Omega\in\mathbb R^2$ is assumed to have a polygonal boundary
$\Gamma$. The function $u_d\in L^2(\Omega)$ is referred to as desired state.
The parameter $\nu>0$ is a regularization parameter and the
corresponding term in the objective guarantees
the existence of a solution in the space $H^{1/2}(\Gamma)$.

This optimal control problem has first been formulated by Lions \cite{Lio71}. Later, a
regularization using the $L^2(\Gamma)$-norm of the control became more
attention \cite{CR06,MRV13,AMPR16}. From the modeling point of view, the
$L^2(\Gamma)$ regularization is
reasonable as the regularization term can be interpreted as a measure for
control costs, but the disadvantage is that the control has a rather unexpected behavior
near the corners. In the general case the control tends to $0$ at convex and to
infinity at reentrant corners \cite{AMPR15}. Thus, the idea of using an energy regularization
instead was revealed by Of et.\ al.\ \cite{OPS13}. It has to be noted that the
behavior near the corners is in this approach 
just shifted to the tangential derivatives of the control. The physical
interpretation of the regularization term using the $H^{1/2}(\Gamma)$-norm of
the control is, that it is 
equivalent to the energy norm of the corresponding state $u(z)$, which might
be, depending on the
concrete application, a measure for control costs as well.
This becomes clear when defining the seminorm in $H^{1/2}(\Gamma)$ by
\begin{equation*}
 |z|_{H^{1/2}(\Gamma)}^2 := \int_\Gamma \partial_n u(z)\,z = \|\nabla u(z)\|_{L^2(\Omega)}^2.
\end{equation*}
Closely related are the investigations for the Neumann control problem
with an $H^{-1/2}(\Gamma)$-regularization \cite{ApelSteinbachWinkler2016,Win15}.
Note, that the optimal state is in both approaches equivalent.

Error estimates for approximate solutions of the Dirichlet control problem
are discussed already in 
\cite{OPS13} where all variables are approximated by piecewise linear finite
elements.
For this approach, and in case of convex computational domains, 
the convergence rate of $1$ for the control in the
$H^{1/2}(\Gamma)$-norm was proved, but in the numerical experiments a higher
convergence rate is observed. The results in the present article will show
that the rate $1$ is only a worst-case estimate for convex domains, meaning,
that if an opening angle of a corner tends to $180^\circ$, the convergence
rate will tend to $1$. The same convergence rate is proved in \cite{JSW18} 
for arbitrary polygonal domains for a discretization using the energy
corrected finite element method.

It is the aim of the present paper to prove sharp convergence rates.
Depending on the opening angle at
the corners one can prove a convergence rate up to 
$3/2$ for the control in the
$H^{1/2}(\Gamma)$-norm.
It turns out that this is in general only possible when the opening angles are
all less than $120^\circ$ as the corresponding singularities are mild enough
to guarantee $H^2(\Gamma)$-regularity of the control. 

The difficult part of the convergence proof is to derive an error estimate for a
variational normal derivative of the finite element solution of
the Poisson and the Laplace equation in the $H^{-1/2}(\Gamma)$-norm.
Such an error term appears due do the approximation of the 
Steklov-Poincar\'e operator $z\mapsto \partial_n u(z)$ used to realize the $H^{1/2}(\Gamma)$-norm, 
and the approximation for the normal derivative of the adjoint state variable
which appears in the optimality condition. A worst-case estimate for
variational normal derivatives in the $H^{-1/2}(\Gamma)$-norm, as used in
\cite{OPS13}, can be easily derived when using a trace theorem and standard 
finite element error estimates. Sharp error estimates require some more
effort and will be discussed intensively in the present article.
Closely related are the error estimates in the $L^2(\Gamma)$-norm
for the exact normal derivative of the finite element
approximation from \cite{HMW14,PW17}. In the latter reference the variational normal derivative
used in the present paper is discussed as well.
In the present article we consider estimates for the variational normal derivative
in $H^{-1/2}(\Gamma)$.
The convergence rate we prove will be related to $\omega_{max}$ denoting the
largest opening angle of the corners of the domain $\Omega$. Moreover, $y$ and $y_h$
are the solution of the Poisson or Laplace equation and its finite element approximation,
respectively. Under the assumption that the input data are sufficiently smooth,
and the normal derivative is continuous in the corners when a convergence
rate larger than $1$ is expected,
we show that the variational normal derivative satisfies the estimate
\begin{equation*}
 \|\partial_n y - \partial_n^h y_h\|_{H^{-1/2}(\Gamma)} \le c\,
 h^{\min\{3/2,\pi/\omega_{max}-\varepsilon\}}
\end{equation*}
with a constant $c>0$ independent of the mesh size $h$, and arbitrary but
sufficiently small $\varepsilon>0$.
The proof is based on an idea developed in \cite{PW17} where estimates
in the $L^2(\Gamma)$-norm on a sequence of 
boundary concentrated meshes is proved. 

As an application, we use this result to derive sharp discretization
error estimates for the optimal control problem \eqref{eq:target_funct}--\eqref{eq:state}.
Therefore, we approximate the control, state and adjoint state by a linear finite
element discretization.
Under the assumption that $u_d$ is H\"older continuous in case of convex
$\Omega$, or belongs to $L^2(\Omega)$ in case of non-convex $\Omega$, 
we show the same convergence rate for the
control approximation in the $H^{1/2}(\Gamma)$-norm, this is,
\begin{equation*}
 \|z - z_h\|_{H^{1/2}(\Gamma)} \le c\,
 h^{\min\{3/2,\pi/\omega_{max}-\varepsilon\}},
\end{equation*}
where $z$ and $z_h$ are the continuous and discrete optimal control.
This confirms the behavior figured out in the 
numerical experiments from \cite{OPS13} on the unit square,
where the rate $3/2$ was predicted numerically. The conjecture that this rate
is achieved on arbitrary convex polygonal domains is obviously wrong. Our
theory promises that this rate is obtained unless all opening angles of
corners are less that $2\pi/3$ which is also confirmed by numerical
experiments. The worst-case convergence rate of $1$ is indeed achieved unless
the domain remains convex. If the largest angle tends to $2\pi$, the
convergence rate will tend to~$1/2$.

As a further application of estimates for variational normal derivatives we mention
Steklov-Poincar\'e operators that are frequently used for parallel finite element methods
relying on domain decomposition \cite{AL85,QV99,XZ97}.
Closely related are the error estimates from \cite{MW12}. Therein, the authors
derive optimal error estimates for discrete Lagrange multipliers in $H^{-1/2}(\Gamma)$
defined on the interfaces of the subdomains. 
The approximation of the multipliers corresponds to some variational
approximation of a normal derivative as well.

The article is structured as follows. In Section~\ref{sec:regularity} 
we collect a priori estimates for solutions of the Poisson and Laplace
equation in weighted norms involving a regularized boundary distance
function. Moreover, we have to carve out the singular behavior near corners
of the domain which is done by weighted Sobolev and H\"older spaces. To this end, we
provide the required shift theorems.
Error estimates for the solution of the Dirichlet boundary value problem in the $L^2(\Omega)$-
and $H^1(\Omega)$-norm as well as for the discrete normal derivatives in
the $H^{-1/2}(\Gamma)$-norm are derived
in Section~\ref{sec:bvp}. These estimates are applied to the discretization of
our optimal control problem in Section~\ref{sec:control}. The results
derived therein are confirmed by the numerical experiments in Section~\ref{sec:experiments}.

\section{Auxiliary results}\label{sec:regularity}

Let us first explain the notation we will use in this paper.
The computational domain is denoted by $\Omega\subset\mathbb R$ and is always assumed to
have a polygonal boundary $\Gamma$.
By $W^{k,p}(\Omega)$, $k\in\mathbb N_0$, $p\in[1,\infty]$ we denote the usual Sobolev spaces
and write $H^{k}(\Omega) := W^{k,2}(\Omega)$, $L^2(\Omega):=H^0(\Omega)$. Frequently, we use the space
$H_0^1(\Omega)$ which is the closure of $C_0^\infty(\Omega)$ with respect to the $H^1(\Omega)$-norm.
For the corresponding norms and inner products we write $\|\cdot\|_{X}$ and $(\cdot,\cdot)_X$, respectively. The subscript $X$ indicates the related space.
Moreover, $\left<\cdot,\cdot\right>$ stands for the dual pairing between
$H^{-1/2}(\Gamma)$ and $H^{1/2}(\Gamma)$.

The aim of this section is to collect some regularity results for the solution of 
the Laplace and Poisson equation. The weak form
reads: Find $y\in H^1(\Omega)$ satisfying 
\begin{equation}\label{eq:bvp}
  y|_\Gamma = g,\qquad  (\nabla y,\nabla v)_{L^2(\Omega)^2} = (f,v)_{L^2(\Omega)}\qquad\forall v\in H^1_0(\Omega).
\end{equation}
The functions $f\in L^2(\Omega)$ and $g\in H^{1/2}(\Gamma)$ are given input data.

\subsection{Weighted regularity}\label{sec:weighted_est_bd}

For technical reasons we recall some a priori estimates in weighted norms involving
the weight function $\sigma(x):=\kappa h + \dist(x,\Gamma)$ with arbitrary $\kappa>0$.
This is a regularized distance function with respect to the boundary of the domain $\Omega$.
The following result is proved already in \cite[Lemma 1]{PW17}.
\begin{lemma}\label{lem:weighted_h1}
 Let $w\in H^1_0(\Omega)$ be the weak solution of $-\Delta w = f$ in $\Omega$.
 Then, the a priori estimate
 \begin{equation*}
  \|\sigma^{-1}\,w\|_{L^2(\Omega)} \le c\, \|\nabla w\|_{L^2(\Omega)} \le c\, \|\sigma\,f\|_{L^2(\Omega)}
\end{equation*}
holds.
\end{lemma}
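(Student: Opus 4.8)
The plan is to prove the two inequalities separately, observing that the second is essentially a consequence of the first. The left-hand bound $\|\sigma^{-1}w\|_{L^2(\Omega)} \le c\,\|\nabla w\|_{L^2(\Omega)}$ is a Hardy-type estimate, while the right-hand bound $\|\nabla w\|_{L^2(\Omega)} \le c\,\|\sigma f\|_{L^2(\Omega)}$ is an energy estimate obtained by testing the weak formulation with the solution itself and then invoking the first inequality to absorb the arising term.

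For the Hardy part, the key observation is that $\sigma(x) = \kappa h + \dist(x,\Gamma) \ge \dist(x,\Gamma)$ pointwise, so that $\sigma^{-1} \le \dist(\cdot,\Gamma)^{-1}$ and therefore
\[
 \|\sigma^{-1}\,w\|_{L^2(\Omega)} \le \left\| \frac{w}{\dist(\cdot,\Gamma)} \right\|_{L^2(\Omega)}.
\]
Since $\Omega$ is a bounded polygonal, hence Lipschitz, domain and $w \in H^1_0(\Omega)$ vanishes on $\Gamma$, the classical Hardy inequality applies and bounds the right-hand side by $c\,\|\nabla w\|_{L^2(\Omega)}$ with a constant $c$ depending only on $\Omega$. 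I emphasize that this constant is independent of $h$: the regularization parameter enters only through $\sigma$, and since we estimated $\sigma^{-1}$ from above by the larger unregularized weight $\dist(\cdot,\Gamma)^{-1}$, the mesh size disappears from the bound entirely.

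For the energy part, I would test the weak formulation of $-\Delta w = f$ with $v = w \in H^1_0(\Omega)$, which yields the identity $\|\nabla w\|_{L^2(\Omega)}^2 = (f,w)_{L^2(\Omega)}$. Writing $f\,w = (\sigma f)(\sigma^{-1}w)$ and applying the Cauchy--Schwarz inequality gives
\[
 \|\nabla w\|_{L^2(\Omega)}^2 \le \|\sigma\,f\|_{L^2(\Omega)}\,\|\sigma^{-1}\,w\|_{L^2(\Omega)}.
\]
Inserting the Hardy estimate $\|\sigma^{-1}w\|_{L^2(\Omega)} \le c\,\|\nabla w\|_{L^2(\Omega)}$ just established and cancelling one factor of $\|\nabla w\|_{L^2(\Omega)}$ (the case $\|\nabla w\|_{L^2(\Omega)}=0$ being trivial) delivers the desired right-hand inequality.

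The only genuinely nontrivial ingredient is the classical Hardy inequality on the Lipschitz domain $\Omega$; the remaining steps are the standard energy identity and Cauchy--Schwarz. The point that requires the most care is the $h$-independence of the constant, but this is guaranteed by the monotone bound $\sigma^{-1} \le \dist(\cdot,\Gamma)^{-1}$, which reduces the weighted estimate to one on the fixed domain with an $h$-independent Hardy constant. Thus I expect no serious obstacle beyond correctly invoking the Hardy inequality with the right weight.
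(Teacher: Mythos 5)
Your proposal is correct and matches the intended argument: the paper does not reprove this lemma but defers to \cite[Lemma 1]{PW17}, whose proof is precisely this combination of the Hardy inequality on the Lipschitz domain $\Omega$ (exploiting the pointwise bound $\sigma \ge \dist(\cdot,\Gamma)$, which makes the constant $h$-independent) with the energy identity, the weighted Cauchy--Schwarz splitting $f\,w = (\sigma f)(\sigma^{-1}w)$, and a kick-back of $\|\nabla w\|_{L^2(\Omega)}$. Your handling of the two potential pitfalls (the $h$-independence of the constant and the degenerate case $\|\nabla w\|_{L^2(\Omega)}=0$) is also correct.
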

Furthermore, we will need an interior regularity result:
\begin{lemma}\label{lem:interior_H2}
  Let $w\in H^1(\Omega)$ satisfy
  \[
    (\nabla w,\nabla v)_{L^2(\Omega)^2} = (f,v)\qquad\forall v\in H^1_0(\Omega)
  \]
  with some function $f\in L^2(\Omega)$.
  Moreover, let be given $\Omega_0\subset\subset\Omega_1\subset\Omega$ and
  denote by $d:=\dist(\partial\Omega_1,\partial\Omega_0)$ the distance
  between the boundaries of $\Omega_0$ and $\Omega_1$. Then, the estimate
 \[
  \|\nabla^2 w\|_{L^2(\Omega_0)} \le c\, \left(\|f\|_{L^2(\Omega_1)} + d^{-1}\,\|\nabla w\|_{L^2(\Omega_1)}\right)
\]
is valid.
\end{lemma}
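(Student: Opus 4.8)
The plan is to use Nirenberg's difference quotient technique together with a cutoff function whose gradient scales like $d^{-1}$; this scaling is precisely what produces the factor $d^{-1}$ in the asserted estimate. Fix a cutoff $\eta\in C_0^\infty(\Omega)$ with $0\le\eta\le 1$, $\eta\equiv 1$ on $\Omega_0$, $\supp\eta\subset\{x:\dist(x,\Omega_0)\le d/2\}$, and $\|\nabla\eta\|_{L^\infty(\Omega)}\le c/d$; such $\eta$ exists because $\dist(\partial\Omega_0,\partial\Omega_1)=d$. Since $\overline{\Omega_0}\subset\Omega_1\subset\Omega$ one checks $\dist(\Omega_0,\partial\Omega)\ge d$, so $\supp\eta$ stays at distance at least $d/2$ from both $\partial\Omega_1$ and $\partial\Omega$. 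For $k\in\{1,2\}$ and $|h|<d/2$ let $D_h^k$ denote the forward difference quotient in the $k$-th coordinate direction, and test the weak formulation with $v=-D_{-h}^k(\eta^2 D_h^k w)$. This $v$ lies in $H_0^1(\Omega)$ by the above choice of $\eta$ and the smallness of $|h|$.

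Using the discrete integration-by-parts rule $\int_\Omega(D_{-h}^k\phi)\,\psi=-\int_\Omega\phi\,(D_h^k\psi)$ and the discrete Leibniz rule, the left-hand side of the weak form rewrites as $\int_\Omega\nabla(D_h^k w)\cdot\nabla(\eta^2 D_h^k w)=\int_\Omega\eta^2\,|\nabla D_h^k w|^2+2\int_\Omega\eta\,(\nabla D_h^k w)\cdot\nabla\eta\,D_h^k w$. The first term is the quantity I want to control, and since $\eta\equiv 1$ on $\Omega_0$ it dominates $\|\nabla D_h^k w\|_{L^2(\Omega_0)}^2$. The second term, together with the right-hand side $-\int_\Omega f\,D_{-h}^k(\eta^2 D_h^k w)$, is handled by Cauchy--Schwarz and Young's inequality. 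Here I use the standard difference-quotient bounds $\|D_h^k w\|_{L^2(\supp\eta)}\le\|\nabla w\|_{L^2(\Omega_1)}$ and $\|D_{-h}^k(\eta^2 D_h^k w)\|_{L^2(\Omega)}\le\|\partial_k(\eta^2 D_h^k w)\|_{L^2(\Omega)}\le\|\eta\,\nabla D_h^k w\|_{L^2(\Omega)}+(c/d)\,\|\nabla w\|_{L^2(\Omega_1)}$, together with $\|\nabla\eta\|_{L^\infty}\le c/d$. Choosing the Young parameters so that the terms proportional to $\int_\Omega\eta^2|\nabla D_h^k w|^2$ can be absorbed into the left-hand side yields $\|\nabla D_h^k w\|_{L^2(\Omega_0)}^2\le c\,\bigl(\|f\|_{L^2(\Omega_1)}^2+d^{-2}\,\|\nabla w\|_{L^2(\Omega_1)}^2\bigr)$, with $c$ independent of $h$ and of $d$.

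Finally, letting $h\to 0$ and invoking the characterization of $H^1$ by uniformly bounded difference quotients shows $\partial_k\nabla w\in L^2(\Omega_0)$ with the same bound; summing over $k=1,2$ and using $\sqrt{a^2+b^2}\le a+b$ gives the claimed estimate for $\|\nabla^2 w\|_{L^2(\Omega_0)}$. I expect the main difficulty to be bookkeeping rather than conceptual: one must (i) carry the $d^{-1}$ scaling through $\nabla\eta$ without absorbing it into an unspecified constant, so that it appears explicitly in the final bound, and (ii) keep the difference-quotient test function admissible, which forces $\supp\eta$ to remain at positive distance from $\partial\Omega$ and restricts $|h|$. Both are resolved by the compact inclusion $\Omega_0\subset\subset\Omega_1$ and the resulting lower bound $\dist(\Omega_0,\partial\Omega)\ge d$.
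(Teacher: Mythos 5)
Your proof is correct and follows essentially the same route as the paper's: both use Nirenberg's difference-quotient technique with a cutoff $\eta$ satisfying $|\nabla\eta|\le c\,d^{-1}$, a Young/absorption argument for the term $\|\eta\,\nabla D^k_h w\|_{L^2}^2$, and the characterization of $H^1$ by uniformly bounded difference quotients. The only cosmetic difference is that you insert the full test function $v=-D^k_{-h}(\eta^2 D^k_h w)$ directly into the original weak form, whereas the paper first shifts the equation onto $D^k_h w$ and then tests with $\eta^2 D^k_h w$; these are equivalent by discrete integration by parts.
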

\begin{proof}
  The estimate (i) can be concluded from the proof of \cite[Theorem 8.8]{GT98} where this assertion is stated with a generic constant depending on the quantity $d$ that we want to carve out exactly.
 Thus, we repeat the proof for the convenience of the reader. 
 The proof basically relies on \cite[Lemma 7.24]{GT98} which states that a function $u\in L^2(\Omega)$ belongs to $H^1(\Omega_0)$ if its difference quotients 
 $D^h_k u(x):= \frac1h(u(x+h\mathbf e_k) - u(x))$, $k\in\{1,2\}$, are bounded in the $L^2(\Omega_0)$-norm for all $h\in\mathbb R$ with $|h|$ sufficiently small such that $D_k^h$ is well-defined 
 in $\Omega_1$. Moreover, the inclusion
 \begin{equation}\label{eq:trick_GT}
  \|D_k^h w\|_{L^2(\Omega_0)} \le K
  \quad\Rightarrow\quad 
  \|\partial_k w\|_{L^2(\Omega_0)} \le K
 \end{equation}
 is valid.
 To conclude the desired estimate we thus have to confirm that $\|D_k^h \nabla w\|_{L^2(\Omega_0)}$ is bounded.
 For technical reasons we introduce a further set $\tilde\Omega$ satisfying $\Omega_0\subset\subset\tilde\Omega\subset\subset \Omega_1$
 and $\dist(\Omega_0,\partial\tilde\Omega)\sim d$.
 For an arbitrary test function $v\in H^1_0(\Omega)$ with $\dist(\supp v,\partial\tilde \Omega)>2h$
 we obtain
 \begin{align}\label{eq:weak_form_diff}
  \int_\Omega (\nabla D_k^h w) \cdot \nabla v &= -\int_\Omega\nabla w\cdot(\nabla D_k^{-h} v) 
  = -\int_\Omega f\, D^{-h}_k v \nonumber\\
  &\le \|f\|_{L^2(\Omega_1)}\,\|\nabla v\|_{L^2(\Omega)}.
 \end{align}
 In the last step we bounded the difference quotient by the first derivative of $v$. Such an estimate is proved in \cite[Lemma 7.23]{GT98}.
 Next, we introduce a smooth cut-off function $\eta\in C_0^\infty(\Omega)$ satisfying $\eta\equiv 1$ in $\Omega_0$ and $\supp \eta\subset\tilde\Omega$.
 Moreover, $\eta$ is constructed in such a way that $|\nabla\eta|\le c\, d^{-1}$. 
 For sufficiently small $h$ we obtain from the product rule and \eqref{eq:weak_form_diff} for $v=\eta^2 D_k^h w$
 \begin{align}\label{eq:diff_quot_estimate}
   &\|\eta\,D_k^h \nabla w\|_{L^2(\Omega)}^2
   = \int_\Omega (\nabla D_k^h w)\cdot (\eta^2 \,(\nabla D_k^h w)) \nonumber\\
   &\quad= \int_\Omega (\nabla D_k^h  w)\cdot(\nabla (\eta^2\, D_k^h w) - 2\,\eta\, \nabla \eta D_k^h w) \nonumber\\
   &\quad \le \|f\|_{L^2(\Omega_1)}\,\|\nabla (\eta^2\,D_k^h w)\|_{L^2(\Omega)}
     + c\,d^{-1}\,\|\eta\, D_k^h \nabla w\|_{L^2(\tilde\Omega)}\,\|D_k^h w\|_{L^2(\tilde \Omega)}.  
 \end{align}
 Again, we apply \cite[Lemma 7.23]{GT98} to obtain $\|D_k^h w\|_{L^2(\tilde \Omega)}\le c\, \|\nabla w\|_{L^2(\Omega_1)}$.
 Moreover, with the product rule we obtain
 \begin{align*}
   \|\nabla (\eta^2\,D_k^h w)\|_{L^2(\Omega)}
   &\le 2\,\|\eta\,\nabla \eta\,D_k^h w\|_{L^2(\Omega)}
     + \|\eta^2\, \nabla D_k^h w\|_{L^2(\Omega)}\\
  &\le c\, \left(d^{-1}\,\|\nabla w\|_{L^2(\Omega_1)} + \|\eta\, D_k^h \nabla w\|_{L^2(\Omega)}\right).
 \end{align*}
 Insertion of this estimate into \eqref{eq:diff_quot_estimate} yields with Young's inequality and a kick-back argument for the latter term on the right-hand side
 \begin{equation*}
   \|\nabla D_k^h w\|_{L^2(\Omega_0)} \le \|\eta\,\nabla D_k^h w\|_{L^2(\Omega)}
   \le c\,\left(\|f\|_{L^2(\Omega_1)} + d^{-1}\,\|\nabla w\|_{L^2(\Omega_1)}\right).
 \end{equation*}
 The desired estimate then follows from \eqref{eq:trick_GT}.
\end{proof}

\subsection{Weighted Sobolev and H\"older spaces}\label{sec:weighted_spaces}

In order to describe the regularity of the solution of boundary value problems
in an accurate way we exploit regularity results in weighted Sobolev spaces.
These spaces capture the corner singularities contained in the solution and
allow us to derive sharp interpolation error estimates.
Throughout the paper we denote the corners of $\Omega$ by $\boldsymbol c_j$, $j\in\mathcal C:=1,\ldots,d$. Moreover, denote by $\Gamma_j$ the boundary edge having endpoints $\boldsymbol c_j$ and $\boldsymbol c_{j+1}$ or $\boldsymbol c_1$ in case of $j=d$. The interior angle between the edges
intersecting in $\boldsymbol c_j$ is $\omega_j\in (0,2\pi)$.

In order to introduce the weighted Sobolev spaces used for the analysis,
we divide the domain into circular sectors $\Omega_R^j:=\{x\in\Omega\colon |x-\boldsymbol c_j| < R\}$, $j\in\mathcal C$, with sufficiently small $R$ such that these sectors do not overlap.
The remaining sets are denoted by $\hat\Omega_R:=\Omega\setminus\cup\{\Omega_R^j\colon j\in\mathcal C\}$.
  For each $k\in\mathbb N_0$, $p\in [1,\infty)$ and some weight $\beta\in\mathbb R_+$ ($\mathbb R_+:=[0,\infty)$), we introduce the local norms
  \begin{align*}
    \|u\|_{V^{k,p}_{\beta}(\Omega_R^j)}^p&:=\sum_{|\alpha|\le k}\|r_j^{\beta-k+|\alpha|}\,D^\alpha u\|_{L^p(\Omega_R^j)}^p,\\
      \|u\|_{W^{k,p}_{\beta}(\Omega_R^j)}^p&:=\sum_{|\alpha|\le k}\|r_j^{\beta}\, D^\alpha u\|_{L^p(\Omega_R^j)}^p,
    \end{align*}
    and for an analogous definition in case of $p=\infty$, the sum has to be replaced by the maximum over $|\alpha|\le k$.
    For some $\vec\beta\in\mathbb R_+^d$ the global norms are defined by
    \begin{equation*}
      \|u\|_{V^{k,p}_{\vec\beta}(\Omega)}:=\left(\sum_{j\in\mathcal C}\|u\|_{V^{k,p}_{\beta_j}(\Omega_R^j)}^p + \|u\|_{W^{k,p}(\hat\Omega_R/2)}^p\right)^{1/p},
    \end{equation*}
    in case of $p\in [1,\infty)$ and with the obvious modification for $p=\infty$.
    When replacing $V$ by $W$ in the definition above, we obtain the global norm $\|\cdot\|_{W^{k,p}_{\vec\beta}(\Omega)}$. 
    The weighted Sobolev spaces $V^{k,p}_{\vec\beta}(\Omega)$ and $W^{k,p}_{\vec\beta}(\Omega)$ are defined as the set of functions whose norms introduced above are finite.
    The trace spaces are denoted by $V^{k-1/p,p}_{\vec\beta}(\Gamma)$ and
    $W^{k-1/p,p}_{\vec\beta}(\Gamma)$, respectively.
    The previous definitions and an intensive discussion on the relation between $V$- and $W$-spaces
    can be found in \cite[Chapter 4,\ \S5]{NP94}, \cite[Section 6.2]{MR10}.

        Later, we will frequently derive error estimates where the convergence rate will depend on the largest weight. Thus, we define
        \[\overline{\vec\beta}:=\max_{j\in\mathcal C} \beta_j.\]
    In the next chapter, we will frequently exploit regularity results in these space with $p=\infty$,
    but for this case a shift theorem is not valid. As a remedy, weighted H\"older spaces are
    used and we take the definition from \cite[Section 6.7.1]{MR10}.
    Again, we define some local norms with parameters $k\in\mathbb N_0$, $\sigma\in (0,1]$ and $\delta\ge \sigma$, defined by
    \begin{align*}
      \|u\|_{\Lambda^{k,\sigma}_{\delta}(\Omega_R^j)}&:= \sup_{x\in \Omega_R^j}\sum_{|\alpha|\le k} r_j(x)^{\delta-k-\sigma+|\alpha|}\,|D^\alpha u(x)| + \left<u\right>_{k,\sigma,\beta,\Omega_R^j},\\
      \|u\|_{C^{k,\sigma}_{\delta}(\Omega_R^j)}&:= \sup_{x\in \Omega_R^j}\sum_{|\alpha|\le k} r_j(x)^{\max\{0,\delta-k-\sigma+|\alpha|\}}\,|D^\alpha u(x)| + \left<u\right>_{k,\sigma,\delta,\Omega_R^j},
    \end{align*}
    where the seminorm is defined by
    \begin{equation*}
      \left<u\right>_{k,\sigma,\delta,\Omega_R^j}:=\sup_{x,y\in \Omega_R^j}\sum_{|\alpha|=k}
      \frac{|r_j(x)^{\delta}\,D^\alpha u(x) - r_j(y)^{\delta}\,D^\alpha u(y)|}{|x-y|^{\sigma}}.
    \end{equation*}
    The global norm is then given by
    \begin{equation*}
      \|u\|_{\Lambda^{k,\sigma}_{\vec\delta}(\Omega)} := \sum_{j\in\mathcal C}\|u\|_{\Lambda^{k,\sigma}_{\delta}(\Omega_R^j)} + \|u\|_{C^{k,\sigma}(\hat\Omega_R/2)}
    \end{equation*}
    with some vector $\vec\delta \in [\sigma,\infty)^d$. Analogously the norm $\|\cdot\|_{C^{k,\sigma}_{\vec\delta}(\Omega)}$ is defined.
    The corresponding function spaces are defined by
    \[
      \Lambda^{k,\sigma}_{\vec\delta}(\Omega):=\overline{C^\infty_0(\overline\Omega\setminus \mathcal S)}
      \vphantom{X}^{\|\cdot\|_{\Lambda^{k,\sigma}_{\vec\delta}(\Omega)}},\qquad
      C^{k,\sigma}_{\vec\delta}(\Omega):=\overline{C^\infty_0(\overline\Omega)}
      \vphantom{X}^{\|\cdot\|_{C^{k,\sigma}_{\vec\delta}(\Omega)}},
    \]
    where $\mathcal S:=\{\boldsymbol c_j\colon j=1,\ldots,d\}$.
    The corresponding trace spaces are endowed with the norm
    \begin{equation}\label{eq:hoelder_trace}
      \|u\|_{\Lambda^{k,\sigma}_{\vec\delta}(\Gamma)} := \inf\{\|\tilde u\|_{\Lambda^{k,\sigma}_{\vec\delta}(\Omega)}\colon \tilde u|_{\Gamma\setminus\{\boldsymbol c_j, j\in \mathcal C\}} \equiv u\},
    \end{equation}
    and analogously for $C^{k,\sigma}_{\vec\delta}(\Gamma)$.
    
Next, we establish a regularity result for weighted Sobolev spaces.
\begin{lemma}\label{lem:regularity_W22}
  Let $f\in W^{0,2}_{\vec\beta}(\Omega)$ and $g\in W^{3/2,2}_{\vec\beta}(\Gamma)$
  with $\vec\beta\in [0,1)^d$ satisfying $\beta_j > 1-\lambda_j$ for all $j\in\mathcal C$.
  Then, the solution of \eqref{eq:bvp} belongs to $W^{2,2}_{\vec\beta}(\Omega)$. In case of $g\in V^{3/2,2}_{\vec\beta}(\Gamma)$ the function $y$ belongs to $V^{2,2}_{\vec\beta}(\Omega)$.
\end{lemma}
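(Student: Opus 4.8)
The plan is to reduce the assertion to the classical Kondrat'ev shift theorem in the homogeneous weighted spaces $V^{k,2}_{\vec\beta}$ and then to recover the $W$-space statement by peeling off the values of the data at the corners. Recall that the leading singular exponent of the Dirichlet Laplacian at $\boldsymbol c_j$ is $\lambda_j=\pi/\omega_j$; in the infinite sector of opening $\omega_j$ the homogeneous solutions are $r_j^{k\lambda_j}\sin(k\lambda_j\theta)$, $k\in\mathbb N$, so that the eigenvalues of the associated operator pencil are exactly $\{k\lambda_j:k\in\mathbb Z\setminus\{0\}\}$ and the strip $(0,\lambda_j)$ is free of eigenvalues.

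I would first treat the case $g\in V^{3/2,2}_{\vec\beta}(\Gamma)$, which is the pure shift theorem. The Mellin line associated with the target space $V^{2,2}_{\beta_j}$ is $\operatorname{Re}\lambda=1-\beta_j$, and the two hypotheses $\beta_j<1$ and $\beta_j>1-\lambda_j$ translate precisely into $0<1-\beta_j<\lambda_j$; this places the line strictly inside the eigenvalue-free strip bounded by the trivial exponent $0$ and the first singular exponent $\lambda_j$. Hence the shift theorem from \cite{NP94,MR10} applies locally at every corner, and, combined with interior and up-to-the-edge $H^2$-regularity on $\hat\Omega_{R/2}$ (standard elliptic theory on the straight edges, cf.\ \cite{GT98}, together with Lemma~\ref{lem:interior_H2} in the interior), it yields the a priori bound
\[
 \|y\|_{V^{2,2}_{\vec\beta}(\Omega)}\le c\left(\|f\|_{V^{0,2}_{\vec\beta}(\Omega)}+\|g\|_{V^{3/2,2}_{\vec\beta}(\Gamma)}\right).
\]
Since $V^{2,2}_{\vec\beta}\hookrightarrow H^1$ for $\vec\beta\in[0,1)^d$ and the trace of a $V$-function vanishes at the corners (so its boundary value is compatible with $g$), the $V$-solution coincides with the weak energy solution by uniqueness, which settles the second claim.

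For the first claim I exploit that, for $\beta_j\in[0,1)$, a direct computation of the weighted integrals of monomials shows that the only polynomials belonging to $W^{2,2}_{\beta_j}$ but not to $V^{2,2}_{\beta_j}$ are the constants, and the analogous statement holds on the trace level for $W^{3/2,2}_{\beta_j}$. Consequently one has the direct-sum decomposition $W^{2,2}_{\beta_j}=V^{2,2}_{\beta_j}\oplus\mathbb C$ near each corner (cf.\ \cite{NP94,MR10}), and $W^{2,2}_{\beta_j}$ embeds into continuous functions there, so that the corner value $c_j:=g(\boldsymbol c_j)$ is well defined and $g-c_j\in V^{3/2,2}_{\beta_j}$ locally. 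Because $y|_\Gamma=g$ we have $y(\boldsymbol c_j)=c_j$, and $w:=y-c_j$ solves $-\Delta w=f$ with $w|_\Gamma=g-c_j\in V^{3/2,2}_{\beta_j}$ near $\boldsymbol c_j$. Choosing cutoffs $\eta_j\equiv 1$ near $\boldsymbol c_j$ with $\operatorname{supp}\eta_j\subset\Omega_R^j$, the localized function $\eta_j w$ satisfies a sector problem whose right-hand side is $\eta_j f$ together with commutator terms $2\nabla\eta_j\cdot\nabla w+(\Delta\eta_j)w$ supported in the annulus $\operatorname{supp}\nabla\eta_j$, which is bounded away from the corner; there $y$ is $H^2$-regular by the edge/interior regularity above, so these terms lie in $V^{0,2}_{\beta_j}$. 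Applying the shift theorem gives $\eta_j w\in V^{2,2}_{\beta_j}$, hence $\eta_j y=\eta_j c_j+\eta_j w\in W^{2,2}_{\beta_j}$, and adding the interior contribution $(1-\sum_j\eta_j)y$, controlled in the unweighted $H^2$-norm on $\hat\Omega_{R/2}$, yields $y\in W^{2,2}_{\vec\beta}(\Omega)$.

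The delicate points are the correct placement of the Mellin line inside the eigenvalue-free strip, which is exactly what the two inequalities on $\beta_j$ encode, and the $W$–$V$ splitting together with the well-definedness of the corner values $g(\boldsymbol c_j)$; once these are in place, the localization and commutator bookkeeping of the last step is routine.
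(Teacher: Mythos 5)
Your proposal is correct and follows essentially the same route as the paper: the $V$-space case is the Kondrat'ev shift theorem (the paper cites \cite[Theorem 1.4.3]{KMR01}, whose applicability is exactly your Mellin-line placement $0<1-\beta_j<\lambda_j$), and the $W$-space case is obtained by splitting off the corner values $g(\boldsymbol c_j)$, i.e.\ the local decomposition $W^{2,2}_{\beta_j}=V^{2,2}_{\beta_j}\oplus\mathbb R$, which is precisely the paper's argument (cf.\ \cite[Theorem 4.\S5.11]{NP94}). The only caveat --- shared with the paper, whose proof likewise restricts to $\beta_j\in(0,1)$ --- is that splitting off constants alone fails at $\beta_j=0$ (a linear polynomial vanishing at the corner lies in $H^2$ but not in $V^{2,2}_0$, by the borderline failure of Hardy's inequality), so that case should instead be settled by classical $H^2$-regularity at convex corners, which is available because $\beta_j=0$ forces $\lambda_j>1$.
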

\begin{proof}
  The regularity result for $V$-spaces can be deduced from \cite[Theorem 1.4.3]{KMR01}.
  Note that this result holds even for a larger range
  of the weights, this is, $\beta_j\in (1-\lambda_j,1+\lambda_j)$.
  From this result we infer the solvability in $W$-spaces as each function $y\in W^{2,2}_{\vec\beta_j}(\Omega_R^j)$ with $\beta_j\in(0,1)$ can be decomposed into $y_0 + p$ with a constant $p=g(x_j)$
  and $y_0\in V^{2,2}_{\beta_j}(\Omega_R^j)$. This is basically the idea which leads to
  \cite[Theorem 4.\S5.11]{NP94} from which we could conclude the same result.
\end{proof}
An analogue of this result is true for the weighted H\"older spaces introduced
above. This is used to show boundedness of the
solution of \eqref{eq:bvp} in a weighted $W^{2,\infty}$-space.
\begin{lemma}\label{lem:reg_hoelder}
 Assume that $f\in \Lambda^{0,\sigma}_{\vec\delta}(\Omega)$ and $g\in
 \Lambda^{2,\sigma}_{\vec\delta}(\Gamma)$ with $\sigma\in (0,1]$
 and weights $\vec\delta\in (\sigma,2+\sigma)^d$ satisfying 
 $2-\lambda_j>\delta_j-\sigma$ for $j\in\mathcal C$.
 Moreover, we exclude the case $\delta_j-\sigma=1$. 
 Then, the solution $y$ of \eqref{eq:bvp} belongs to
 $\Lambda^{2,\sigma}_{\vec\delta}(\Omega)$ and depends continuously on the input
 data. This result remains true when replacing $\Lambda$ by $C$.
\end{lemma}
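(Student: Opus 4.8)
The plan is to prove this Hölder counterpart of Lemma~\ref{lem:regularity_W22} along the same lines, combining classical Schauder theory away from the vertices with a Kondrat'ev-type shift theorem in weighted Hölder spaces near each vertex. I would work with the covering $\Omega=\hat\Omega_{R/2}\cup\bigcup_{j\in\mathcal C}\Omega_R^j$ and a subordinate smooth partition of unity $\{\chi_0,\chi_j\}$. On $\hat\Omega_{R/2}$, which is bounded away from $\mathcal S$, all weights $r_j^{\,\cdot}$ are comparable to constants, so the $\Lambda^{2,\sigma}_{\vec\delta}$- and $C^{2,\sigma}_{\vec\delta}$-norms reduce to the ordinary $C^{2,\sigma}$-norm, and the required bound is the interior-plus-boundary Schauder estimate for \eqref{eq:bvp} with right-hand side in $C^{0,\sigma}$ and datum in $C^{2,\sigma}$, see \cite{GT98}. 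The whole difficulty is concentrated in the sectors $\Omega_R^j$.

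In a sector I would freeze the geometry to the model cone of opening $\omega_j$ and use the Mellin/operator-pencil analysis of \cite[Section~6.7]{MR10}. The pencil of $-\Delta$ with Dirichlet conditions has spectrum $\{\pm k\lambda_j:k\in\mathbb N\}$, with the leading singular exponent $\lambda_j=\pi/\omega_j$, and the corresponding weighted Hölder shift theorem states that $(-\Delta,\mathrm{tr})$ is an isomorphism from $\Lambda^{2,\sigma}_{\delta_j}(\Omega_R^j)$ onto $\Lambda^{0,\sigma}_{\delta_j}(\Omega_R^j)\times\Lambda^{2,\sigma}_{\delta_j}(\Gamma\cap\partial\Omega_R^j)$ as soon as the critical line of the Mellin variable, whose height is fixed by $\delta_j-\sigma$, carries no eigenvalue of the pencil. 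The hypotheses $\delta_j\in(\sigma,2+\sigma)$ and $2-\lambda_j>\delta_j-\sigma$ place this line in precisely such a strip — the Hölder analogue of the admissible range $\beta_j\in(1-\lambda_j,1+\lambda_j)$ exploited in Lemma~\ref{lem:regularity_W22} — and single out, among all distributional solutions, the finite-energy solution of \eqref{eq:bvp}. The inhomogeneous boundary datum is transferred to the right-hand side through the quotient norm \eqref{eq:hoelder_trace}: I would pick an extension $\tilde g\in\Lambda^{2,\sigma}_{\vec\delta}(\Omega)$ realising the infimum up to a fixed factor and solve the shifted problem for $y-\tilde g$ with homogeneous data and forcing $f+\Delta\tilde g\in\Lambda^{0,\sigma}_{\vec\delta}(\Omega)$.

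The passage from the $\Lambda$- to the $C$-scale follows the decomposition already used in the proof of Lemma~\ref{lem:regularity_W22}. In contrast to $\Lambda^{2,\sigma}_{\vec\delta}$, functions of $C^{2,\sigma}_{\vec\delta}$ may carry a nontrivial Taylor jet at a corner; I would therefore subtract, in each sector, a harmonic polynomial $P_j$ of degree $0$ or $1$ reproducing that jet — namely the constant $g(\boldsymbol c_j)$ when $\delta_j-\sigma>1$ and the full affine part when $\delta_j-\sigma<1$ — which is exactly why the borderline value $\delta_j-\sigma=1$, at which the first-order weight $r_j^{\max\{0,\delta_j-1-\sigma\}}$ of the $C$-scale ceases to be smooth, has to be excluded. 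Since $P_j$ is harmonic, $y-P_j$ solves \eqref{eq:bvp} with forcing $f$ and boundary datum $g-P_j|_\Gamma\in\Lambda^{2,\sigma}_{\vec\delta}(\Gamma)$; as the order-zero scales coincide, $C^{0,\sigma}_{\vec\delta}=\Lambda^{0,\sigma}_{\vec\delta}$, the datum $f$ already lies in the right forcing space, so $y-P_j\in\Lambda^{2,\sigma}_{\vec\delta}(\Omega)$ by the first part and $y=P_j+(y-P_j)\in C^{2,\sigma}_{\vec\delta}(\Omega)$ after gluing with the Schauder bound on $\hat\Omega_{R/2}$. Continuous dependence is inherited from the bounded inverse of the isomorphism together with the continuity of the extension in \eqref{eq:hoelder_trace} and of the Schauder estimate.

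The main obstacle I anticipate is to assemble these ingredients rigorously rather than to invoke any single one of them. Two points deserve care. First, \cite[Section~6.7]{MR10} must be used in exactly the $\Lambda$- and $C$-scales and with the trace norm \eqref{eq:hoelder_trace} defined here, so that the isolated model-cone statement is upgraded to an estimate on the genuine sector $\Omega_R^j$; this is also where one has to verify that the finite-energy solution coincides with the one delivered by the isomorphism, the step in which the weight restriction is indispensable. Second, gluing the sectorial and interior estimates by the partition of unity produces commutator terms $[\Delta,\chi_j]$ supported in the overlap of a sector with $\hat\Omega_{R/2}$; these are of lower order and must be reabsorbed, so the global estimate has to be closed by a kick-back argument in the spirit of the proof of Lemma~\ref{lem:interior_H2}. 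I expect the careful bookkeeping of these lower-order contributions, and not the abstract shift theorem, to be the technically demanding part.
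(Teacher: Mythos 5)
Your strategy is essentially the paper's: a Kondrat'ev-type shift theorem in weighted H\"older spaces gives the $\Lambda$-regularity, and the $C$-case is reduced to it by subtracting at each corner the Taylor jet of degree $1$ (when $\delta_j-\sigma<1$) or degree $0$ (when $\delta_j-\sigma>1$) -- which is exactly why the borderline $\delta_j-\sigma=1$ is excluded; your degree selection and your explanation of the exclusion match the paper's proof precisely. The differences are in execution. For the $\Lambda$-part the paper simply cites a global shift theorem (KMR01, Theorem 1.4.5) for the whole polygon, so no partition of unity ever acts on $y$ itself: the only cut-offs $\eta_j$ multiply the subtracted polynomials $p_j=q_{1-\nu_j}(y;\boldsymbol c_j)$, and the resulting terms $\Delta\eta_j\,p_j+2\nabla\eta_j\cdot\nabla p_j$ are smooth and supported away from the corners, hence trivially admissible right-hand sides. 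The commutator/kick-back bookkeeping that you single out as the main technical obstacle is therefore an artifact of your localization and disappears entirely in the paper's arrangement; likewise, the paper handles the inhomogeneous datum through the trace-norm estimate of MR10, Theorem 6.7.6, namely $\|g-\sum_j\eta_j\,p_j\|_{\Lambda^{2,\sigma}_{\vec\delta}(\Gamma)}\le c\,\|g\|_{C^{2,\sigma}_{\vec\delta}(\Gamma)}$, rather than via an extension $\tilde g$ and reduction to homogeneous boundary values. The second difference concerns continuous dependence: the a priori estimate the paper invokes carries a lower-order term $\|u\|_{L^1(\Omega)}$, which it must bound separately through the $V$--$W$ norm equivalences and the $W^{2,2}_{\vec\beta}(\Omega)$-regularity of Lemma~\ref{lem:regularity_W22}; your isomorphism formulation would avoid that term, but only once the step you yourself flag -- identifying the finite-energy solution of \eqref{eq:bvp} with the solution delivered by the Mellin isomorphism -- has actually been carried out, so both proofs end up spending their effort at the same point.
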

\begin{proof}  
  The proof for the regularity in $\Lambda$-spaces can be deduced from
  \cite[Theorem 1.4.5]{KMR01}. 
  In order to show the regularity result in the weighted $C$-spaces we
  basically follow the ideas used in \cite[Lemma 3.13]{Pfe}.
  First, introduce the numbers $\nu_j$, $j\in\mathcal C$, such that $\nu_j <
  \delta_j-\sigma < \nu_j+1$. 
 Then, we split the solution into
\begin{equation*}
  y = u + \sum_{j=1}^d \eta_j\, p_j,
\end{equation*}
with smooth cut-off functions $\eta_j$ satisfying $\eta_j\equiv 1$ in
$\Omega_{R/2}^j$ and $\supp{\eta_j}\subset \overline{\Omega_R^j}$ for
all $j\in\mathcal C$, and polynomials $p_j$ of order not greater than $1-\nu_j$.
The key idea is to show that $u$ belongs to a weighted $\Lambda$-space
and the desired result follows from certain relations between $C$- and
$\Lambda$-spaces.
By a reformulation of the boundary value problem, we confirm that $u$ solves 
\begin{align*}
 -\Delta u &= f + \sum_{j=1}^d \left(\Delta\eta_j\, p_j + 2\,\nabla\eta_j\cdot\nabla p_j\right) := F &&\mbox{in}\ \Omega,\\
u &= g - \sum_{j=1}^d \eta_j\, p_j:=G  && \mbox{on}\ \Gamma.
\end{align*}
Our aim is to show that $u$ belongs to $\Lambda^{2,\sigma}_{\vec\delta}(\Omega)$ which would
follow under the assumption $F\in\Lambda^{0,\sigma}_{\delta}(\Omega)$ and
$G\in\Lambda^{2,\sigma}_{\vec\delta}(\Gamma)$. To achieve this, 
we have to construct the polynomials $p_j$ appropriately and therefore, we 
we define the projection 
\begin{equation}\label{eq:def_qk}
  q_k(v;\boldsymbol c_j)(x) := \sum_{|\alpha|=0}^k \frac{1}{|\alpha|!}(D^\alpha
  v)(\boldsymbol c_j)\,(x-\boldsymbol c_j)^\alpha
\end{equation}
for $j\in\mathcal C$ and $k\in \mathbb N_0$.
In a similar way we construct a projection for functions defined on the
boundary by means of $q_k^\partial(v,\boldsymbol c_j):=\gamma_0
q_k(\tilde v;\boldsymbol c_j)$, where $\tilde v$ is an arbitrary extension of
$v$ and $\gamma_0$ is the trace operator. The polynomial
$q_k^\partial(v;\boldsymbol c_j)$ is independent of the extension $\tilde v$, and hence, there holds
$\gamma_0 q_k(y; \boldsymbol c_j) = q_k^\partial(g;\boldsymbol c_j)$ as $y\equiv g$ on $\Gamma$.
In the following we use the choice $p_j := q_{1-\nu_j}(y;\boldsymbol c_j)$.
That $F$ belongs to $\Lambda^{0,\sigma}_{\vec\delta}(\Omega)$ is obvious, as
$f$ is assumed to be contained in $C^{0,\sigma}_{\vec\delta}(\Omega)$ and this
space is equivalent to $\Lambda^{0,\sigma}_{\vec\delta}(\Omega)$ if
$\vec\delta>0$, see the arguments before Lemma 6.7.1 in
\cite{MR10}.
Moreover, the cut-off functions $\eta_j$ are constant in the neighborhood of the corners
and thus, the products $\nabla\eta_j\cdot\nabla p_j$ and $\Delta \eta_j p_j$
belong trivially to that space.
Consequently, we get
\begin{equation}\label{eq:hoelder_F}
 \|F\|_{\Lambda^{0,\sigma}_{\vec\delta}(\Omega)} \le c\,\left(
 \|f\|_{C^{0,\sigma}_{\vec\delta}(\Omega)} + \sum_{j=1}^d
 \|p_j\|_{C^1(\Omega_R^j)} \right).
\end{equation}
With the definition \eqref{eq:def_qk} and the
imposed Dirichlet boundary conditions, taking into account
$p_j|_\Gamma = q_{1-\nu_j}^\partial(g; \boldsymbol c_j)$, $j\in\mathcal C$,
we deduce
\begin{align}\label{eq:bound_pj}
  \|p_j\|_{C^1(\Omega_R^j)}
  &\le c\, \sum_{|\alpha|=0}^{1-\nu_j} |D^\alpha g(\boldsymbol c_j)|
    \le c\, \|\tilde g\|_{C^{1-\nu_j,\varepsilon}_{0}(\Omega_R^j)} \nonumber \\
  & \le c\, \|\tilde g\|_{C^{2,\sigma}_{1+\nu_j+\sigma-\varepsilon}(\Omega_R^j)}
    \le c\, \|\tilde g\|_{C^{2,\sigma}_{\delta_j}(\Omega_R^j)}
    = c\, \|g\|_{C^{2,\sigma}_{\delta_j}(\Gamma_R^j)}
\end{align}
for $\delta_j - \sigma < 1+\nu_j$ and sufficiently small $\varepsilon>0$,
and $\tilde g$ is a suitable extension of $g$, see \eqref{eq:hoelder_trace}.
The second and third step follow from the equivalence of $C^{k,\sigma}_{0}(\Gamma_R^j)$ and $C^{k,\sigma}(\Gamma_R^j)$ stated in \cite[Lemma 6.7.2]{MR10} and an embedding theorem for weighted H\"older spaces,
see \cite[Lemma 6.7.1]{MR10}. The embedding used in the last step is trivial.

The property $G\in \Lambda^{2,\sigma}_{\vec\delta}(\Gamma)$ follows from 
\cite[Theorem 6.7.6]{MR10} which provides the a priori estimate 
\begin{equation}\label{eq:hoelder_G}
  \|G\|_{\Lambda^{2,\sigma}_{\vec\delta}(\Gamma)}
  = \|g-\sum_{j=1}^d \eta_j\, p_j\|_{\Lambda^{2,\sigma}_{\vec\delta}(\Gamma)} \le 
  c\,\|g\|_{C^{2,\sigma}_{\vec\delta}(\Gamma)}.
\end{equation}
The regularity result proved in \cite[Theorem 1.4.5(2)]{KMR01} then guarantees $u\in
\Lambda^{2,\sigma}_{\vec\delta}(\Omega)$, and with
the triangle inequality, the trivial estimate
$\|v\|_{C^{2,\sigma}_{\vec\delta}(\Omega)} \le c\,
\|v\|_{\Lambda^{2,\sigma}_{\vec\delta}(\Omega)}$ for $v\in
\Lambda^{2,\sigma}_{\vec\delta}(\Omega)$ and \eqref{eq:bound_pj}
we infer
\begin{align*}
  \|y\|_{C^{2,\sigma}_{\vec\delta}(\Omega)} &\le \|u\|_{C^{2,\sigma}_{\vec\delta}(\Omega)} + \sum_{j=1}^d
  \|\eta_j\, p_j\|_{C^{1}(\Omega)} \\
  &\le c\,\left(\|u\|_{\Lambda^{2,\sigma}_{\vec\delta}(\Omega)} + \|g\|_{C^{2,\sigma}_{\vec\delta}(\Gamma)}\right).
\end{align*}
An a priori estimate for the weighted $\Lambda$-norm of $u$ can be concluded from
\cite[Theorem 1.4.5(1)]{KMR01}, which leads to
\begin{align}\label{eq:a_priori_Lambda}
  \|y\|_{C^{2,\sigma}_{\vec\delta}(\Omega)}
  &\le c\,\left(\|F\|_{\Lambda^{0,\sigma}_{\vec\delta}(\Omega)}
    + \|G\|_{\Lambda^{2,\sigma}_{\vec\delta}(\Gamma)} + \|u\|_{L^1(\Omega)} + \|g\|_{C^{2,\sigma}_{\vec\delta}(\Gamma)}\right)\nonumber\\
  &\le c\,\left(\|f\|_{C^{0,\sigma}_{\vec\delta}(\Omega)} + \|g\|_{C^{2,\sigma}_{\vec\delta}(\Gamma)}
    + \|u\|_{L^1(\Omega)}\right),
\end{align}
where the second step follows from the estimates \eqref{eq:hoelder_F}
and \eqref{eq:hoelder_G}.
The $L^1(\Omega)$ norm of $u$ can be bounded by the
$V^{2,2}_{\vec\beta}(\Omega)$-norm with weights $\beta_j =
\max\{0,\delta_j-\sigma-1\}+\varepsilon$, $j\in\mathcal C$, and $\varepsilon>0$ sufficiently small.
Using the norm equivalence from \cite[Theorem 5.6]{NP94} or \cite[Lemma 6.2.12]{MR10} we arrive at
\begin{align}\label{eq:L1_u}
  \|u\|_{L^1(\Omega)} 
  \le c\, \|u\|_{V^{2,2}_{\vec\beta}(\Omega)}
  \le c\,\|y\|_{W^{2,2}_{\vec\beta}(\Omega)}.
  \end{align}
With Lemma \ref{lem:regularity_W22} and embeddings of $W$- into $C$-spaces,
see e.\,g.\ \cite[Lemma 2.39]{Pfe}, we deduce
\begin{equation}\label{eq:y_W22}
\|y\|_{W^{2,2}_{\vec\beta}(\Omega)} \le
c\left(\|f\|_{C^{0,\sigma}_{\vec\delta}(\Omega)} + \|g\|_{C^{2,\sigma}_{\vec\delta}(\Gamma)}\right).
\end{equation}
The desired a priori estimate follows after insertion of 
\eqref{eq:L1_u} and \eqref{eq:y_W22} into \eqref{eq:a_priori_Lambda}.
\end{proof}
The regularity results in weighted H\"older spaces allow us to extend the assertion
of Lemma \ref{lem:regularity_W22} to 
$L^\infty$ based norms. This is a simple conclusion from the definition of the spaces $V$ and $\Lambda$
as well as $W$ and $C$.
\begin{corollary}\label{cor:reg_W2inf}
  Assume that $\vec\delta\in (\sigma,2+\sigma)^d$ satisfies the assumptions
  of Lemma \ref{lem:reg_hoelder}. Let $\vec\gamma\in (0,2)^d$ be a weight vector defined by
  $\gamma_j:=\delta_j-\sigma$ for $j\in\mathcal C$.
  \begin{enumerate}[i)]
  \item If $f\in \Lambda^{0,\sigma}_{\vec\delta}(\Omega)$ and
    $g\in \Lambda^{2,\sigma}_{\vec\delta}(\Gamma)$,
    the solution $y$ of \eqref{eq:bvp} belongs to $V^{2,\infty}_{\vec\gamma}(\Omega)$.
  \item If $f\in C^{0,\sigma}_{\vec\delta}(\Omega)$ and
    $g\in C^{2,\sigma}_{\vec\delta}(\Gamma)$,
    the solution $y$ of \eqref{eq:bvp} belongs to $W^{2,\infty}_{\vec\gamma}(\Omega)$.
  \end{enumerate}
\end{corollary}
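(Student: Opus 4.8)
The plan is to read off both inclusions directly from the definitions of the weighted spaces, using Lemma \ref{lem:reg_hoelder} as the only substantial input. First I would invoke that lemma: under the stated hypotheses on $\vec\delta$, the solution $y$ of \eqref{eq:bvp} lies in $\Lambda^{2,\sigma}_{\vec\delta}(\Omega)$ in case i) and in $C^{2,\sigma}_{\vec\delta}(\Omega)$ in case ii). It then remains to establish the two continuous embeddings
\[
  \Lambda^{2,\sigma}_{\vec\delta}(\Omega) \hookrightarrow V^{2,\infty}_{\vec\gamma}(\Omega),
  \qquad
  C^{2,\sigma}_{\vec\delta}(\Omega) \hookrightarrow W^{2,\infty}_{\vec\gamma}(\Omega),
\]
for $\gamma_j = \delta_j - \sigma$, which I would verify sector by sector and then add the far-field contribution.

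For case i) the exponents match identically. On each sector $\Omega_R^j$ the weight appearing in the supremum part of the $\Lambda^{2,\sigma}_{\delta_j}$-norm is $r_j^{\delta_j-2-\sigma+|\alpha|}=r_j^{\gamma_j-2+|\alpha|}$, which is exactly the weight of the $V^{2,\infty}_{\gamma_j}$-norm for each multi-index $\alpha$. Since a maximum over $|\alpha|\le 2$ of nonnegative suprema is dominated by the supremum of the corresponding sum, I obtain
\[
  \|y\|_{V^{2,\infty}_{\gamma_j}(\Omega_R^j)}
  = \max_{|\alpha|\le 2}\sup_{x\in\Omega_R^j} r_j(x)^{\gamma_j-2+|\alpha|}\,|D^\alpha y(x)|
  \le \|y\|_{\Lambda^{2,\sigma}_{\delta_j}(\Omega_R^j)},
\]
so the sectorial contributions transfer with no loss (the Hölder seminorm in the $\Lambda$-norm is nonnegative and may simply be discarded).

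For case ii) the weight $r_j^{\gamma_j}$ of the $W^{2,\infty}_{\gamma_j}$-norm is the same for every derivative order, whereas the $C^{2,\sigma}_{\delta_j}$-norm carries the truncated exponent $\max\{0,\gamma_j-2+|\alpha|\}$. The elementary bound $\max\{0,\gamma_j-2+|\alpha|\}\le\gamma_j$, valid for $|\alpha|\le 2$ and $\gamma_j>0$, together with the boundedness $r_j\le R$ on each sector, yields $r_j^{\gamma_j}\le c\,r_j^{\max\{0,\gamma_j-2+|\alpha|\}}$ with $c$ depending only on $R$ and $\gamma_j$; hence each term of the $W$-norm is controlled by the matching term of the $C$-norm. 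In both cases the far-field piece $\hat\Omega_{R/2}$ is treated identically: there both global norms reduce to their unweighted counterparts and the trivial embedding $C^{2,\sigma}(\hat\Omega_{R/2})\hookrightarrow W^{2,\infty}(\hat\Omega_{R/2})$ — a Hölder norm dominates the $L^\infty$-bounds on all derivatives up to order two — closes the estimate. I do not expect a genuine obstacle: as the statement already indicates, the result is a direct bookkeeping in the weight exponents, the only delicate point being the $\max\{0,\cdot\}$ truncation in the $C$-spaces for case ii), which is absorbed precisely by the boundedness of $r_j$ on the sectors.
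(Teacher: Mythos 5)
Your proposal is correct and matches the paper's own argument: the paper likewise invokes Lemma~\ref{lem:reg_hoelder} and then remarks that the conclusion is ``a simple conclusion from the definition of the spaces $V$ and $\Lambda$ as well as $W$ and $C$'', which is precisely the exponent bookkeeping ($\delta_j-2-\sigma+|\alpha| = \gamma_j-2+|\alpha|$ for the $\Lambda\hookrightarrow V$ case, and absorption of the $\max\{0,\cdot\}$ truncation via $r_j\le R$ for the $C\hookrightarrow W$ case) that you carry out explicitly.
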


\section{Error estimates for normal derivatives}\label{sec:bvp}

In this section we consider a finite element discretization for the weak form of the boundary value problem \eqref{eq:bvp} which reads
\begin{equation*}
 y|_\Gamma\equiv g,\quad (\nabla y,\nabla v)_{L^2(\Omega)^2} = (f,v)_{L^2(\Omega)} \qquad \forall v\in H^1_0(\Omega).
\end{equation*}
Therefore, let $\{\mathcal T_h\}_{h>0}$ be a quasi-uniform family of shape-regular triangulations of $\Omega$,
which are feasible in the sense of \cite[Section 5]{Cia91}. The parameter $h$ denotes the maximal diameter of all elements from $\mathcal T_h$.
The trial and test spaces are defined by
\begin{equation*}
 V_h:=\{v_h\in C(\overline\Omega)\colon v_h|_T\in \mathcal P_1\ \mbox{for all}\ T\in \mathcal T_h\},\quad V_{0h}:=V_h\cap H^1_0(\Omega).
\end{equation*}
Moreover, the traces of function from $V_h$ belong to the space 
\begin{equation*}
V_h^\partial:=\{w_h\in C(\Gamma)\colon w_h= v_h|_\Gamma\ \mbox{for some}\ v_h\in V_h\}. 
\end{equation*}
The finite-element approximation $y_h\in V_h$ of $y$ is defined by
\begin{equation}\label{eq:fe_form}
 y_h|_\Gamma \equiv g_h,\quad (\nabla y_h,\nabla v_h)_{L^2(\Omega)^2}= (f,v_h)_{L^2(\Omega)}\qquad \forall v_h\in V_{0h},
\end{equation}
where $g_h\in V_h^\partial$ is some appropriate interpolation or projection of $g$.
In the following, $g_h$ will be the $L^2(\Gamma)$-projection of $g$ onto $V_h^\partial$, this is, $g_h:=Q_h(g)$.
Moreover, we denote by 
\begin{equation}\label{eq:Ih}
 I_h\colon C(\overline\Omega)\to V_h,\qquad [I_h u](x) = \sum_{i=1}^{N} u(x_i)\,\varphi_i(x)
\end{equation}
the nodal interpolant.
Here, $x_i\in\Omega$, $i=1,\ldots,N$, denote the nodes of $\mathcal T_h$ and $\{\varphi_i\}_{i=1}^N$ the nodal basis of $V_h$. Moreover, we will use a slightly modified interpolant defined by
\begin{equation}\label{eq:Ih_tilde}
  \tilde I_h y = I_h y + \mathcal E_h(Q_h g-I_h g),
\end{equation}
where $\mathcal E_h\colon V_h^\partial\to V_h$ is the zero extension which vanishes in the interior nodes of $\mathcal T_h$. For functions $y\in C(\overline\Omega)$ with $y|_\Gamma = g$, the interpolant fulfills the essential property $[\tilde I_h y]|_\Gamma \equiv Q_h g$ that is needed for instance in the proof
of a C\'ea-Lemma. As the local interpolation error estimates will frequently depend on the distance
to the corners, we introduce the notation
\begin{equation*}
  r_{j,T} = \inf_{x\in T}|x-\boldsymbol c_j|\qquad j\in\mathcal C,\ T\in\mathcal T_h.
\end{equation*}

We start our investigations with an interpolation error estimate for the
boundary datum $g$. 
\begin{lemma}\label{lem:int_error_g}
  Let be given weight vectors $\vec\alpha\in[0,1/2)^d$ and $\vec\gamma\in[0,3/2)^d$.
Then, the interpolation error estimates 
\begin{align*}
  \|g-I_h g\|_{L^2(\Gamma)}+ h^{1/2}\,\|g-I_h g\|_{H^{1/2}(\Gamma)} &\le c\,
  h^{2-\overline{\vec\gamma}}\,|g|_{W^{2,2}_{\vec\gamma}(\Gamma)},\\
  \|g-I_h g\|_{L^2(\Gamma)} + h^{1/2}\,\|g-I_h g\|_{H^{1/2}(\Gamma)}&\le c\,
  h^{3/2-\overline{\vec\alpha}}\,\|g\|_{W^{3/2,2}_{\vec\alpha}(\Gamma)},
\end{align*}
are valid, provided that $g$ possesses the regularity demanded by the
right-hand side.
\end{lemma}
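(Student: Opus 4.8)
The plan is to reduce the global interpolation error to a sum of weighted local errors over the boundary elements and to treat the elements abutting a corner separately from those bounded away from the corners. I write $\Gamma=\bigcup_{j}\Gamma_j$ and decompose each edge $\Gamma_j$ into the boundary mesh elements $T$; by quasi-uniformity $h_T\sim h$ for all $T$. For an element $T$ bounded away from every corner one has $r_j|_T\sim r_{j,T}$, so the weight is frozen on $T$ up to a constant, and a scaled interpolation estimate on the reference interval gives, for the nearest corner $j$,
\[
 \|g-I_h g\|_{L^2(T)}\le c\,h_T^{2}\,|g|_{W^{2,2}(T)}\le c\,h_T^{2-\gamma_j}\,\|r_j^{\gamma_j}\,g''\|_{L^2(T)},
\]
together with the analogous $H^1$-seminorm version carrying the exponent $1-\gamma_j$. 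Since $h_T\sim h$ and $\gamma_j\le\overline{\vec\gamma}$, each local bound is dominated by $h^{2-\overline{\vec\gamma}}$ (respectively $h^{1-\overline{\vec\gamma}}$) times the local weighted seminorm.

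The first genuinely technical point concerns the (at most two per corner) elements $T$ touching a corner $\boldsymbol c_j$, where $r_j$ degenerates and the frozen-weight argument fails. Here I would establish a weighted one-dimensional interpolation inequality on the reference element with the corner placed at an endpoint: for $\gamma\in[0,3/2)$,
\[
 \|v-I_h v\|_{L^2(T)}\le c\,h_T^{2-\gamma}\,\|r_j^{\gamma}\,v''\|_{L^2(T)},
\]
and its $H^1$-seminorm counterpart with exponent $1-\gamma$. These follow from a Hardy-type argument combined with scaling, keeping explicit track of the weight; the restriction $\gamma<3/2$ is precisely what guarantees the embedding of the weighted space into $C^0$ near the corner, so that the nodal value defining $I_h v$ is meaningful, and what keeps the exponents $2-\gamma$ and $1-\gamma$ admissible. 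I expect this weighted estimate on the singular element to be one of the two main obstacles.

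Summing the squared local $L^2$-bounds over all elements yields the global estimate with rate $2-\overline{\vec\gamma}$, the largest weight dominating. The second, and more delicate, obstacle is the $H^{1/2}$-seminorm, because of its nonlocal Slobodeckij nature. I would avoid estimating the double integral directly and instead argue by operator interpolation of the codomain: having the two bounds $\|g-I_h g\|_{L^2(\Gamma)}\le c\,h^{2-\overline{\vec\gamma}}\,|g|_{W^{2,2}_{\vec\gamma}(\Gamma)}$ and, by the same local-plus-summation argument applied to the $H^1$-seminorm on each edge (using that $I_h g$ is globally continuous, so the edgewise seminorms assemble to the global one), $\|g-I_h g\|_{H^1(\Gamma)}\le c\,h^{1-\overline{\vec\gamma}}\,|g|_{W^{2,2}_{\vec\gamma}(\Gamma)}$, I would view $I-I_h$ as an operator with fixed domain $W^{2,2}_{\vec\gamma}(\Gamma)$ and interpolate its two targets via $H^{1/2}(\Gamma)=[L^2(\Gamma),H^1(\Gamma)]_{1/2,2}$. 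This produces the rate $\tfrac12(2-\overline{\vec\gamma})+\tfrac12(1-\overline{\vec\gamma})=3/2-\overline{\vec\gamma}$. Since $I_h$ reproduces affine functions, the error operator annihilates $\mathcal P_1$ and factors through the quotient $W^{2,2}_{\vec\gamma}(\Gamma)/\mathcal P_1$, on which the seminorm is a norm; this legitimizes the interpolation argument with bounds expressed through the seminorm alone.

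For the second estimate, where $g$ carries only the weaker regularity $W^{3/2,2}_{\vec\alpha}(\Gamma)$, I would repeat the entire scheme with fractional local estimates, obtained either by interpolating the local bounds between the $W^{1,2}_{\vec\alpha}$ and $W^{2,2}_{\vec\alpha}$ endpoints or by a fractional weighted interpolation inequality on the reference element; this yields local errors with exponent $3/2-\alpha_j$. The more restrictive weight range $\vec\alpha\in[0,1/2)^d$ is dictated exactly by the embedding $W^{3/2,2}_{\vec\alpha}(\Gamma)\hookrightarrow C(\Gamma)$ needed for the nodal interpolant to be defined up to the corners. Summation gives the $L^2$-rate $3/2-\overline{\vec\alpha}$, and the same codomain-interpolation argument between $L^2(\Gamma)$ and $H^1(\Gamma)$ delivers the $H^{1/2}$-rate $1-\overline{\vec\alpha}$, which combined with the factor $h^{1/2}$ in the statement reproduces the claimed bound.
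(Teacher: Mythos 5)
Your overall architecture---local estimates, special treatment of the corner elements, summation, and passage to $H^{1/2}(\Gamma)$ by interpolating the $L^2(\Gamma)$- and $H^1(\Gamma)$-bounds for $g-I_hg$---matches the paper's (which, for the first estimate, simply cites an earlier result and interpolates). Your Hardy-type weighted inequality on corner elements is a sound way to prove the integer-order $W^{2,2}_{\vec\gamma}$ estimate directly. One caveat: its $H^1$-counterpart on a corner element forces $\gamma_j<1$, since there $(v-I_hv)'\sim r_j^{1/2-\gamma_j}$, which is square-integrable only for $\gamma_j<1$; indeed $r_j^\lambda\in W^{2,2}_{\gamma_j}$ near the corner for $\lambda>3/2-\gamma_j$ while $r_j^\lambda\notin H^1$ for $\lambda<1/2$, so for $\gamma_j\in[1,3/2)$ the $L^2$--$H^1$ interpolation route cannot cover the full stated range. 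This defect is shared by the paper's own argument and is harmless for its applications (there $\gamma_j<1/2+\varepsilon$), so I do not count it against you.

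The genuine gap is in your second estimate, i.e.\ the fractional regularity $g\in W^{3/2,2}_{\vec\alpha}(\Gamma)$, which you dispatch in one sentence by ``interpolating the local bounds between the $W^{1,2}_{\vec\alpha}$ and $W^{2,2}_{\vec\alpha}$ endpoints.'' This does not go through as stated, for two reasons. First, in this paper $W^{3/2,2}_{\vec\alpha}(\Gamma)$ is a \emph{trace space} (its norm is an infimum over extensions to $\Omega$), so before any element-wise argument you must characterize it intrinsically and show that the global norm dominates the sum of the local norms you interpolate with; fractional norms do not localize additively, and all constants must be uniform in $h$. Second, and more fundamentally, $W$-weighted norms are not scaling-homogeneous: under the map to the reference element, $\|r_j^{\alpha}D^k u\|_{L^2(E)}=h^{\alpha-k+1/2}\|\hat r^{\alpha}\hat D^k\hat u\|_{L^2(\hat E)}$ carries a $k$-dependent power of $h$, which ruins any clean, $h$-uniform identification of the local interpolation space with $W^{3/2,2}_{\alpha_j}(E)$. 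The paper's proof exists precisely to circumvent this: it splits $g=g_0+\sum_j\eta_j\,p_j$ with constants $p_j$ (the corner values) and cut-offs $\eta_j$, so that $g_0$ lies in $V^{3/2,2}_{\vec\alpha}(\Gamma)$ by the norm equivalence of $W$- and $V$-spaces modulo corner values; $V$-norms \emph{are} scaling-homogeneous, and on the reference element the embeddings $V^{3/2,2}_{\alpha_j}(\hat E)\hookrightarrow L^\infty(\hat E)$ and $V^{3/2,2}_{\alpha_j}(\hat E)\hookrightarrow H^1(\hat E)$ (valid exactly for $\alpha_j<1/2$) yield the corner-element rates $h^{3/2-\alpha_j}$ and $h^{1/2-\alpha_j}$ in one stroke; the leftover terms $\eta_jp_j-I_h(\eta_jp_j)$ are smooth and handled by standard estimates. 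Your fallback, a ``fractional weighted interpolation inequality on the reference element,'' is morally this argument, but without the $W\to V$ reduction it is not available---supplying that reduction is the actual content of the paper's proof of the second estimate, and it is the idea missing from your proposal.
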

\begin{proof}
  The first estimate can be deduced from \cite[Lemma 3.2.4]{Win15}. There, the desired error
  estimate in the $L^2(\Gamma)$- and $H^1(\Gamma)$-norm is proved. The estimate in $H^{1/2}(\Gamma)$
  follows from an interpolation argument.
  
  To show the second estimate we will reuse existing interpolation error
  estimates exploiting regularity in weighted $V$-spaces.
  To this end, we split up the function $g$ by means of $g=g_0+\eta_j\, p_j$
  with $g_0\in V^{3/2,2}_{\vec\alpha}(\Gamma)$, certain constants $p_j\in \mathbb R$,
  $j\in\mathcal C$, and smooth cut-off functions $\eta_j=\eta_j(|x-\boldsymbol c_j|)$
  satisfying
  \[\eta_j|_{\Omega_{R/2}^j}\equiv 1,\quad \supp(\eta_j)\subset \overline{\Omega_R^j}
    \quad\mbox{and}\quad \|D^\alpha \eta_j\|_{L^\infty(\Omega)}\le c \ \forall |\alpha|\le 2.\]
  Note that the nodal interpolant preserves the functions $\eta_j\, p_j$ near the corners.
  Hence, it suffices to prove an estimate for $g_0$.
  In order to derive local interpolation error estimates we denote by $\hat E:=(0,1)$ the reference
  interval, and by $F_E\colon \hat E\to E$ the affine reference transformation.
  Moreover, we write $\hat v(\hat x) = v(F_E(\hat x))$ for all $\hat x\in \hat E$.
  The norms of the weighted Sobolev spaces on the reference element, $V^{k-1/2,2}_{\alpha}(\hat E)$,
  are defined analogous to the global norms introduced in Section \ref{sec:weighted_spaces}
  but the weight function is defined by $\hat r(\hat x):=|\hat x|$.
  For elements $E\in\mathcal E_h$ touching the corner $\boldsymbol c_j$, $j\in\mathcal C$,
  there holds the property $r_j(F_E(\hat x)) \sim h\,\hat r(\hat x)$.
  
  For all elements $E\in\mathcal E_h$ with $r_{j,E}=0$ for some $j\in\mathcal C$, we obtain
  the estimate
  \begin{align*}
    \|g_0 - I_h g_0\|_{L^2(E)} &\le c\, |E|^{1/2}\,\|\hat g_0\|_{L^\infty(\hat E)}
    \le c\, |E|^{1/2}\,\|\hat g_0\|_{V^{3/2,2}_{\alpha_j}(\hat E)}\\
    &\le c\, h^{3/2-\alpha_j}\,\|g_0\|_{V^{3/2,2}_{\alpha_j}(E)},
    \end{align*}
    which follows from the arguments used in the proof of \cite[Lemma 4.5]{APR12}.
    In case of $E\subset \Omega_{R/2}^j$ for some $j\in\mathcal C$ and $r_{j,E} > 0$  we deduce
    \begin{equation*}
      \|g_0-I_h g_0\|_{L^2(E)} \le c\, h^{3/2}\,|g_0|_{H^{3/2}(E)}
      \le c\, h^{3/2-\alpha_j}\,\|g_0\|_{V^{3/2}_{\alpha_j}(E)},
  \end{equation*}
  where the argument used in the last step can also be found in \cite[Lemma 4.5]{APR12}.
  Far away from the corners, i.\,e.\
  $r_{j,E} > 1/4$ for all $j\in\mathcal C$, 
  we can use a standard estimate to get $\|g_0-I_h g\|_{L^2(E)} \le c\, h^{3/2}\,|g_0|_{H^{3/2}(E)}$.
  Combining the previous estimates and using a standard estimate for the error terms $p_j\,\eta_j - I_h(p_j\,\eta_j)$ yields
  \begin{align}\label{eq:interpolation_V_to_W}
    \|g - I_h g\|_{L^2(\Gamma)}
    &\le c\, \left(\|g_0-I_h g_0\|_{L^2(\Gamma)} + h^2\,\sum_{j\in\mathcal C} |p_j|\right) \nonumber\\
    &\le
      ch^{3/2-\overline{\vec\alpha}}\left(\|g_0\|_{V^{3/2,2}_{\vec\alpha}(\Gamma)}
      + \sum_{j\in\mathcal C} |p_j|\right)
      \le c\, h^{3/2-\overline{\vec\alpha}}\,\|g\|_{W^{3/2,2}_{\vec\alpha}(\Gamma)}.     
  \end{align}
  The last step is a consequence of the norm equivalence stated in \cite[Ch.\ 4, Theorem 5.7]{NP94}.

  The estimate in the $H^{1/2}(\Gamma)$-norm follows from an interpolation argument between
  estimates in $L^2(\Gamma)$ and $H^1(\Gamma)$.
  To show an estimate in $H^1(\Gamma)$ we derive local estimates first.
  For all elements $E\in\mathcal E_h$ with $r_{j,E} = 0$ we obtain
  \begin{align*}
    \|g_0-I_h g_0\|_{H^1(E)}&\le c\, h^{-1}\,|E|^{1/2}\,\|\hat g_0\|_{H^1(\hat E)}
    \le c\, h^{-1}\,|E|^{1/2}\,\|\hat g_0\|_{V^{3/2,2}_{\alpha_j}(\hat E)}\\
    &\le c\, h^{1/2-\alpha_j}\,\|g_0\|_{V^{3/2,2}_{\alpha_j}(E)},
  \end{align*}
  where the second step is an application of the embedding $V^{3/2,2}_{\alpha_j}(\hat E)\hookrightarrow H^1(\hat E)$, which is valid for $\alpha_j < 1/2$.
  Otherwise, if $E\subset\Omega_{R/2}^j$ and $r_{j,E} > 0$, we obtain with similar arguments
  \begin{equation*}
    \|g_0-I_h g_0\|_{H^1(E)} \le c\, h^{1/2}\,|g_0|_{H^{3/2}(E)} \le c\, h^{1/2-\alpha_j}\,\|g_0\|_{V^{3/2,2}_{\alpha_j}(E)}.
  \end{equation*}
  In the far interior, i.\,e.\ for $E\in\mathcal E_h$ with $r_{j,E}> 1/4$ for all $j\in\mathcal C$,
  we can use a standard estimate exploiting $H^{3/2}$-regularity. Summation over all elements
  $E\in\mathcal E_h$ and an interpolation argument lead to the desired estimate for $g_0-I_h g_0$
  in the $H^{1/2}(\Gamma)$-norm. With the splitting $g = g_0+\sum_{j\in\mathcal C} p_j\, \eta_j$ we get
  an estimate for $g-I_h g$ when using the arguments from \eqref{eq:interpolation_V_to_W}.
\end{proof}

Using the ideas of \cite{BCD04} we can derive error estimates for the approximate solutions
$y_h$ in the norms $H^1(\Omega)$ and $L^2(\Omega)$ . However,
in this reference $H^2(\Gamma_j)$-regularity ($j\in\mathcal C$) for the
Dirichlet datum $g$ is assumed.
As we deal with optimal Dirichlet control problems, 
the boundary datum for the state is the control function which might be less regular.
Thus, we repeat the proof assuming less regularity for $g$ in some weighted Sobolev space.
\begin{lemma}\label{lem:error_estimate_l2}
  Assume that $y\in W^{2,2}_{\vec\alpha}(\Omega)$ and $g\in W^{3/2,2}_{\vec\alpha}(\Gamma)$
  with a weight vector $\vec\alpha\in [0,1/2)^d$.
  Moreover, let $g_h:= Q_h g$.  
  Then, the solution $y_h\in V_h$ of \eqref{eq:fe_form} satisfies the error estimates
 \begin{equation}\label{eq:fe_error_l2}
   \begin{aligned}
        \|y-y_h\|_{H^1(\Omega)}
   &\le c\, h^{1-\bar{\vec\alpha}}\left(|y|_{W^{2,2}_{\vec\alpha}(\Omega)}
     + \|g\|_{W^{3/2,2}_{\vec\alpha}(\Gamma)}\right),\\
   \|y-y_h\|_{L^2(\Omega)}
   &\le c\, h^{3/2-\overline{\vec\alpha} + \varepsilon(\Omega)}\left(|y|_{W^{2,2}_{\vec\alpha}(\Omega)}
       + \|g\|_{W^{3/2,2}_{\vec\alpha}(\Gamma)}\right),
   \end{aligned}
 \end{equation}
 with some sufficiently small $\varepsilon(\Omega)\in (0,1/2]$ depending on the
 opening angles of the corners of $\Omega$. For convex domains, the choice $\varepsilon(\Omega)=1/2$ is possible.
\end{lemma}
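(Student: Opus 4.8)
The plan is to prove the two estimates by different but classical techniques, adapted to the weighted setting and to the inhomogeneous, only approximately satisfied boundary condition $y_h|_\Gamma=Q_h g$. The $H^1(\Omega)$ estimate will follow from Galerkin orthogonality together with a quasi--best--approximation argument based on the modified interpolant $\tilde I_h$ from \eqref{eq:Ih_tilde}, which has the crucial property $[\tilde I_h y]|_\Gamma\equiv Q_h g$. The $L^2(\Omega)$ estimate will follow from an Aubin--Nitsche duality argument in which the possibly reduced regularity of the adjoint solution is quantified by Lemma \ref{lem:regularity_W22}.

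For the $H^1$ bound I would first subtract \eqref{eq:fe_form} from \eqref{eq:bvp} to obtain the Galerkin orthogonality $(\nabla(y-y_h),\nabla v_h)_{L^2(\Omega)}=0$ for all $v_h\in V_{0h}$. Since $\tilde I_h y - y_h$ has vanishing trace, it belongs to $V_{0h}$, so testing with $v_h=\tilde I_h y-y_h$ and using Cauchy--Schwarz gives $\|\nabla(\tilde I_h y-y_h)\|_{L^2(\Omega)}\le\|\nabla(y-\tilde I_h y)\|_{L^2(\Omega)}$, and hence, by the triangle inequality, $\|\nabla(y-y_h)\|_{L^2(\Omega)}\le 2\,\|\nabla(y-\tilde I_h y)\|_{L^2(\Omega)}$. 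It remains to bound the two contributions in $\tilde I_h y=I_h y+\mathcal E_h(Q_h g-I_h g)$. The term $\|\nabla(y-I_h y)\|_{L^2(\Omega)}\le c\,h^{1-\overline{\vec\alpha}}\,|y|_{W^{2,2}_{\vec\alpha}(\Omega)}$ is a standard weighted interpolation estimate, obtained by the same local scaling arguments as in \cite{APR12} and in the proof of Lemma \ref{lem:int_error_g}. For the extension I would use that $\mathcal E_h w_h$ is supported in the layer of elements touching $\Gamma$, yielding the inverse--type estimate $\|\nabla\mathcal E_h w_h\|_{L^2(\Omega)}\le c\,h^{-1/2}\,\|w_h\|_{L^2(\Gamma)}$; combining it with $\|Q_h g-I_h g\|_{L^2(\Gamma)}\le\|g-Q_h g\|_{L^2(\Gamma)}+\|g-I_h g\|_{L^2(\Gamma)}\le c\,h^{3/2-\overline{\vec\alpha}}\,\|g\|_{W^{3/2,2}_{\vec\alpha}(\Gamma)}$, where the first summand is controlled by the second using optimality of $Q_h$ and Lemma \ref{lem:int_error_g}, gives the rate $h^{1-\overline{\vec\alpha}}$ for the boundary contribution as well.

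For the $L^2$ bound I would introduce the adjoint solution $\phi\in H^1_0(\Omega)$ defined by $(\nabla v,\nabla\phi)_{L^2(\Omega)}=(v,y-y_h)_{L^2(\Omega)}$ for all $v\in H^1_0(\Omega)$, i.e.\ $-\Delta\phi=y-y_h$ with $\phi|_\Gamma=0$. Green's formula then yields the representation
\[
\|y-y_h\|_{L^2(\Omega)}^2=(\nabla(y-y_h),\nabla\phi)_{L^2(\Omega)}-\int_\Gamma(g-Q_h g)\,\partial_n\phi.
\]
In the interior term I subtract $I_h\phi\in V_{0h}$, which is admissible since $\phi|_\Gamma=0$ and $\phi$ is continuous, and apply Galerkin orthogonality to bound it by $\|\nabla(y-y_h)\|_{L^2(\Omega)}\,\|\nabla(\phi-I_h\phi)\|_{L^2(\Omega)}$. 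In the boundary term I exploit that $g-Q_h g$ is $L^2(\Gamma)$--orthogonal to $V_h^\partial$ in order to subtract a suitable $w_h\in V_h^\partial$ from $\partial_n\phi$. Lemma \ref{lem:regularity_W22}, applied to the adjoint problem with weights $\vec\beta\in[0,1/2)^d$ satisfying $\beta_j>1-\lambda_j$, gives $\phi\in W^{2,2}_{\vec\beta}(\Omega)$ with $|\phi|_{W^{2,2}_{\vec\beta}(\Omega)}\le c\,\|y-y_h\|_{L^2(\Omega)}$, so that the interpolation estimates yield $\|\nabla(\phi-I_h\phi)\|_{L^2(\Omega)}\le c\,h^{1-\overline{\vec\beta}}\,\|y-y_h\|_{L^2(\Omega)}$ and, for the trace $\partial_n\phi\in W^{1/2,2}_{\vec\beta}(\Gamma)$, $\inf_{w_h}\|\partial_n\phi-w_h\|_{L^2(\Gamma)}\le c\,h^{1/2-\overline{\vec\beta}}\,\|y-y_h\|_{L^2(\Omega)}$. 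Inserting these together with $\|g-Q_h g\|_{L^2(\Gamma)}\le c\,h^{3/2-\overline{\vec\alpha}}\,\|g\|_{W^{3/2,2}_{\vec\alpha}(\Gamma)}$ and the $H^1$ estimate, and dividing by $\|y-y_h\|_{L^2(\Omega)}$, produces the rate $h^{2-\overline{\vec\alpha}-\overline{\vec\beta}}$, so that the choice $\varepsilon(\Omega):=\tfrac12-\overline{\vec\beta}$ reproduces the claimed exponent $\tfrac32-\overline{\vec\alpha}+\varepsilon(\Omega)$.

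The main obstacle is the regularity of the adjoint solution, which dictates the admissible weight $\overline{\vec\beta}$ and thus $\varepsilon(\Omega)$. For convex $\Omega$ all $\lambda_j=\pi/\omega_j>1$, so one may take $\vec\beta=0$, obtaining full $H^2$--regularity and hence $\varepsilon(\Omega)=\tfrac12$. For non-convex $\Omega$ the most reentrant corner forces $\overline{\vec\beta}>1-\pi/\omega_{max}$, so that $\varepsilon(\Omega)$ stays strictly positive precisely because $\omega_{max}<2\pi$, while it tends to $0$ as $\omega_{max}\to2\pi$. A secondary technical point, which does not arise in \cite{BCD04} where $g$ is assumed smoother, is the careful treatment of the boundary term caused by the mismatch $y_h|_\Gamma=Q_h g$: here the $L^2(\Gamma)$--orthogonality of the projection and the weighted trace regularity of $\partial_n\phi$ are exactly what is needed to recover the same rate as the interior term.
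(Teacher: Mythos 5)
Your proof is correct. For the $H^1(\Omega)$ bound it is essentially the paper's argument: Galerkin orthogonality combined with the modified interpolant $\tilde I_h$, the inverse-type estimate $\|\nabla \mathcal E_h w_h\|_{L^2(\Omega)} \le c\,h^{-1/2}\,\|w_h\|_{L^2(\Gamma)}$, and Lemma \ref{lem:int_error_g}. For the $L^2(\Omega)$ bound both proofs use the same Aubin--Nitsche skeleton (same dual problem, same splitting into an interior term reduced by Galerkin orthogonality and a boundary term treated with the $L^2(\Gamma)$-orthogonality of $Q_h$), but you quantify the dual regularity differently. The paper measures the dual solution $w$ in the fractional space $H^{3/2+\varepsilon(\Omega)}(\Omega)$, so that $\partial_n w \in H^{\varepsilon(\Omega)}(\Gamma)$ and the boundary term is handled by the elementary projection error estimate $\|\partial_n w - Q_h(\partial_n w)\|_{L^2(\Gamma)} \le c\,h^{\varepsilon(\Omega)}\,\|\partial_n w\|_{H^{\varepsilon(\Omega)}(\Gamma)}$ plus a trace theorem; this keeps the dual-problem machinery minimal. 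You instead invoke Lemma \ref{lem:regularity_W22} to place $\phi$ in $W^{2,2}_{\vec\beta}(\Omega)$ (in fact $V^{2,2}_{\vec\beta}(\Omega)$, since $\phi|_\Gamma = 0$) and set $\varepsilon(\Omega)=1/2-\overline{\vec\beta}$, which stays entirely inside the weighted framework of Section \ref{sec:weighted_spaces} and makes the dependence of $\varepsilon(\Omega)$ on the corner exponents $\lambda_j$ explicit. The price is the boundary step: the best-approximation estimate $\inf_{w_h\in V_h^\partial}\|\partial_n\phi - w_h\|_{L^2(\Gamma)} \le c\,h^{1/2-\overline{\vec\beta}}\,\|\phi\|_{V^{2,2}_{\vec\beta}(\Omega)}$ is not a routine interpolation fact; it is precisely the Cl\'ement-type estimate from \cite{PW17} that the paper itself quotes later in the proof of Theorem \ref{thm:main_result_nonconvex}, so it is available, but it should be cited as such rather than attributed to ``the interpolation estimates''. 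With that reference supplied, both routes give the identical rate $h^{3/2-\overline{\vec\alpha}+\varepsilon(\Omega)}$ and the same admissible range of $\varepsilon(\Omega)$, including $\varepsilon(\Omega)=1/2$ for convex domains.
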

\begin{proof}
  First, we derive the error estimate in $H^1(\Omega)$.
  We apply the error equation
  $(\nabla(y-y_h), \nabla (\tilde I_h y - y_h))_{L^2(\Omega)^2} = 0$
  and obtain
  \begin{align*}
    \|\nabla(y-y_h)\|_{L^2(\Omega)}^2 &= (\nabla(y-y_h), \nabla(y-I_h y))_{L^2(\Omega)^2}\\
    &+ (\nabla (y-y_h),\nabla\mathcal E_h (I_h g - Q_h g))_{L^2(\Omega)^2}.                                        
  \end{align*}
  With a standard interpolation error estimate exploiting weighted regularity,
  see e.\,g\ \cite[Lemma 3.31]{Pfe}, we get 
  \begin{equation*}
    (\nabla(y-y_h), \nabla(y-I_h y))_{L^2(\Omega)^2} \le c\, h^{1-\overline{\vec\alpha}}\,|y|_{W^{2,2}_{\vec\alpha}(\Omega)}\,\|\nabla(y-y_h)\|_{L^2(\Omega)}.
  \end{equation*}
  Note that the zero extension satisfies
  $\|\nabla\mathcal E_h \phi_h\|_{L^2(\Omega)} \le c\, h^{-1/2}\,\|\phi_h\|_{L^2(\Gamma)}$,
  see e.\,g.\ \cite[Lemma 3.3]{MRV13}. Thus, together with Lemma \ref{lem:int_error_g}
  we obtain an estimate for the second term 
  \begin{align*}
    (\nabla (y-y_h),\nabla\mathcal E_h (I_h g - Q_h g))
    &\le c\, h^{-1/2}\,\|g-I_h g\|_{L^2(\Gamma)}\,\|\nabla(y-y_h)\|_{L^2(\Omega)}\\
    &\le c\, h^{1-\overline{\vec\alpha}}\,\|g\|_{W^{3/2,2}_{\vec\alpha}(\Gamma)}\,\|\nabla(y-y_h)\|_{L^2(\Omega)}.    
  \end{align*}
  
  In order to derive an estimate in the $L^2(\Omega)$-norm we use a duality argument.
  Let $w\in H^1_0(\Omega)$ be the weak solution of $-\Delta w = y-y_h$ in $\Omega$. 
  With partial integration, the orthogonality of the $L^2(\Gamma)$-projection $Q_h$,
  the estimate in the $H^1(\Omega)$-norm 
  and Lemma \ref{lem:int_error_g}, we obtain for sufficiently small $\varepsilon(\Omega)\in (0,1/2]$
 \begin{align*}
   &\|y-y_h\|_{L^2(\Omega)}^2
   = (y-y_h, -\Delta w)_{L^2(\Omega)}\\
   &\quad = (\nabla(y-y_h), \nabla (w-I_h w))_{L^2(\Omega)^2}
     - (g-g_h,\partial_nw - Q_h(\partial_n w))_{L^2(\Gamma)} \\
   &\quad\le c\, h^{3/2-\overline{\vec\alpha}+\varepsilon(\Omega)}\left(|y|_{W^{2,2}_{\vec\alpha}(\Omega)}\,\|w\|_{H^{3/2+\varepsilon(\Omega)}(\Omega)}
     + \|g\|_{W^{3/2,2}_{\vec\alpha}(\Gamma)}\,\|\partial_n w \|_{H^{\varepsilon(\Omega)}(\Omega)}\right).
 \end{align*}
 The assertion follows from the a priori estimate
 \[\|\partial_n w\|_{H^{\varepsilon(\Omega)}(\Gamma)} \le c\, \|w\|_{H^{3/2+\varepsilon(\Omega)}(\Omega)} \le
 c\,\|y-y_h\|_{L^2(\Omega)}.\]
\end{proof}

The aim in the remainder of this section is to derive error estimates for the variational normal derivative of the approximate solution $y_h$.
Motivated by Green's identity this is defined by
\begin{equation}\label{eq:discrete_normal_deriv}
 \partial_n^h y_h \in V_h^\partial\colon\quad (\partial_n^h y_h,v_h)_{L^2(\Gamma)} = (\nabla y_h,\nabla v_h)_{L^2(\Omega)^2} - (f,v_h)_{L^2(\Omega)}\quad\forall v_h\in V_h.
\end{equation}
Note that both the left- and right-hand side are zero for test functions from $V_{0h}$.
Hence, in order to compute $\partial_n^h y_h$, it suffices to test the equation \eqref{eq:discrete_normal_deriv} with the nodal basis functions that belong to the boundary nodes.

We start our considerations with an existence and stability result.
\begin{lemma}\label{lem:stability_normal}
 For arbitrary input data $f\in L^2(\Omega)$, $g_h\in V_h^\partial$, the variational normal derivative $\partial_n^h y_h\in V_h^\partial$ defined by \eqref{eq:discrete_normal_deriv}
 exists, is unique, and satisfies the estimate
 \begin{equation*}
  \|\partial_n^h y_h\|_{H^{-1/2}(\Gamma)} \le c\,\left(\|f\|_{L^2(\Omega)} + \|g_h\|_{H^{1/2}(\Gamma)}\right).
 \end{equation*}
\end{lemma}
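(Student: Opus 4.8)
The plan is to separate the algebraic solvability of \eqref{eq:discrete_normal_deriv} from the stability bound, the latter being obtained by a duality argument. For existence and uniqueness, I would first note that the left-hand side of \eqref{eq:discrete_normal_deriv} depends on $v_h$ only through its trace $v_h|_\Gamma$, while the right-hand side, although written for all $v_h\in V_h$, is in fact independent of the interior degrees of freedom: if $v_h,\tilde v_h\in V_h$ have the same trace, then $v_h-\tilde v_h\in V_{0h}$ and the Galerkin relation \eqref{eq:fe_form} forces the two right-hand sides to coincide. Consequently \eqref{eq:discrete_normal_deriv} is equivalent to the problem of finding $\partial_n^h y_h\in V_h^\partial$ with $(\partial_n^h y_h,w_h)_{L^2(\Gamma)}=F(w_h)$ for all $w_h\in V_h^\partial$, where $F$ is a well-defined linear functional. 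Since $(\cdot,\cdot)_{L^2(\Gamma)}$ is symmetric and positive definite on the finite-dimensional space $V_h^\partial$, the associated boundary mass matrix is invertible, and the Lax--Milgram lemma yields a unique solution.

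For the stability estimate I would exploit the dual-norm representation
\begin{equation*}
  \|\partial_n^h y_h\|_{H^{-1/2}(\Gamma)}
  = \sup_{0\neq\phi\in H^{1/2}(\Gamma)}
    \frac{(\partial_n^h y_h,\phi)_{L^2(\Gamma)}}{\|\phi\|_{H^{1/2}(\Gamma)}},
\end{equation*}
using that $\partial_n^h y_h\in V_h^\partial\subset L^2(\Gamma)$ so that the dual pairing is realized by the $L^2(\Gamma)$ inner product. Fixing $\phi\in H^{1/2}(\Gamma)$, the orthogonality of the $L^2(\Gamma)$-projection $Q_h$ onto $V_h^\partial$ gives $(\partial_n^h y_h,\phi)_{L^2(\Gamma)}=(\partial_n^h y_h,Q_h\phi)_{L^2(\Gamma)}$. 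I would then take as test function the discrete harmonic extension $v_h\in V_h$ of $Q_h\phi$, i.e.\ the unique element of $V_h$ with $v_h|_\Gamma=Q_h\phi$ and $(\nabla v_h,\nabla\psi_h)_{L^2(\Omega)^2}=0$ for all $\psi_h\in V_{0h}$. The defining relation \eqref{eq:discrete_normal_deriv} and the Cauchy--Schwarz inequality then give
\begin{align*}
  (\partial_n^h y_h,Q_h\phi)_{L^2(\Gamma)}
  &= (\nabla y_h,\nabla v_h)_{L^2(\Omega)^2}-(f,v_h)_{L^2(\Omega)}\\
  &\le \|\nabla y_h\|_{L^2(\Omega)}\,\|\nabla v_h\|_{L^2(\Omega)}
     + \|f\|_{L^2(\Omega)}\,\|v_h\|_{L^2(\Omega)}.
\end{align*}

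It remains to control the norms of $v_h$ and $y_h$ by the right-hand side of the claimed estimate. For $v_h$ I would invoke the $h$-uniform $H^1(\Omega)$-stability of the discrete harmonic extension together with the $H^{1/2}(\Gamma)$-stability of $Q_h$ on quasi-uniform meshes to obtain $\|v_h\|_{H^1(\Omega)}\le c\,\|Q_h\phi\|_{H^{1/2}(\Gamma)}\le c\,\|\phi\|_{H^{1/2}(\Gamma)}$, which bounds both $\|\nabla v_h\|_{L^2(\Omega)}$ and $\|v_h\|_{L^2(\Omega)}$ by $c\,\|\phi\|_{H^{1/2}(\Gamma)}$. For $y_h$ I would use the standard stability of the discrete state: writing $y_h=y_{0h}+\mathcal H_h g_h$ with $y_{0h}\in V_{0h}$ and $\mathcal H_h g_h$ the discrete harmonic extension of $g_h$, testing \eqref{eq:fe_form} with $y_{0h}$ and using Poincar\'e's inequality yields $\|\nabla y_h\|_{L^2(\Omega)}\le c\,(\|f\|_{L^2(\Omega)}+\|g_h\|_{H^{1/2}(\Gamma)})$. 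Substituting these bounds into the last display, dividing by $\|\phi\|_{H^{1/2}(\Gamma)}$, and passing to the supremum over $\phi$ gives the asserted estimate.

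The main obstacle is not the algebra but the $h$-independent boundedness of the two auxiliary operators that enter the duality argument: the $H^{1/2}(\Gamma)$-stability of the $L^2(\Gamma)$-projection $Q_h$ and the $H^1(\Omega)$-stability of the discrete harmonic extension. Both properties hinge on the quasi-uniformity of the triangulation family and constitute the quantitative core of the lemma, whereas the orthogonality reduction and the Cauchy--Schwarz steps are routine.
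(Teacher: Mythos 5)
Your proof is correct and follows essentially the same route as the paper: a duality argument in which the test function $\phi\in H^{1/2}(\Gamma)$ is replaced by a discrete one and lifted by the discrete harmonic extension (the paper's operator $B_h$), followed by Cauchy--Schwarz and the $H^1(\Omega)$ a priori bound for $y_h$. The only difference is presentational: you unpack the paper's appeal to ``discrete stability of $V_h^\partial$ in $H^{1/2}(\Gamma)$'' into its ingredients (the $Q_h$-orthogonality plus the $H^{1/2}(\Gamma)$-stability of $Q_h$) and spell out the existence/uniqueness step, both of which the paper leaves implicit.
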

\begin{proof}
  In the following $B_h\colon V_h^\partial \to V_h$ is the discrete harmonic extension
  which satisfied the well-known estimate $\|B_h v_h\|_{H^1(\Omega)} \le c\, \|v_h\|_{H^{1/2}(\Gamma)}$.
  Together with the discrete stability of functions from $V_h^\partial$ in $H^{1/2}(\Gamma)$ 
  as well as \eqref{eq:fe_form} we obtain
 \begin{align}\label{eq:discrete_stability}
   \|\partial_n^hy_h\|_{H^{-1/2}(\Gamma)}
   &\le c\, \sup_{\genfrac{}{}{0pt}{}{v_h\in V_h^\partial}{v_h\not\equiv 0}} \frac{(\partial_n^h y_h,v_h)_{L^2(\Gamma)}}{\|v_h\|_{H^{1/2}(\Gamma)}} \nonumber\\
   &\le c\, \sup_{\genfrac{}{}{0pt}{}{v_h\in V_h^\partial}{v_h\not\equiv 0}} \frac{(\nabla y_h,\nabla B_hv_h)_{L^2(\Omega)^2} - (f,B_hv_h)_{L^2(\Omega)}}{\|B_hv_h\|_{H^1(\Omega)}} \nonumber \\
   &\le c\, \left(\|\nabla y_h\|_{L^2(\Omega)} + \|f\|_{L^2(\Omega)}\right).
 \end{align}
 The a priori estimate $\|y_h\|_{H^1(\Omega)} \le c\,(\|f\|_{L^2(\Omega)} + \|g_h\|_{H^{1/2}(\Gamma)})$ implies the assertion.
\end{proof}
Next, we show an error estimate for the variational normal derivative,
for which we exploit the $W^{2,2}_{\vec\alpha}(\Omega)$-regularity of the solution.
The result of the following theorem is sharp for non-convex domains $\Omega$,
and also for convex domains when the solution is not more regular than $H^2(\Omega)$
(this happens e.\,g.\ when the right-hand side belongs to $L^2(\Omega)$, but not to $L^p(\Omega)$ with $p>2$). Later, we prove an estimate which promises a higher convergence rate for convex domains, provided that the solution belongs to $W^{2,\infty}_{\vec\beta}(\Omega)$.
\begin{theorem}\label{thm:main_result_nonconvex}
  Let $\Omega$ be an arbitrary polygonal domain.
  Moreover, let $g_h = Q_h g$. Under the assumptions $y\in
   W^{2,2}_{\vec\alpha}(\Omega)$ 
   and $g\in W^{3/2}_{\vec\alpha}(\Gamma)$ with $\vec\alpha\in [0,1/2)^d$, there
   holds the error estimate
   \begin{equation*}
     \|\partial_n y - \partial_n^h y_h\|_{H^{-1/2}(\Gamma)} \le c\, h^{1-\overline{\vec\alpha}}
     \left(\|y\|_{W^{2,2}_{\vec\alpha}(\Omega)}
       + \|g\|_{W^{3/2,2}_{\vec\alpha}(\Gamma)}\right).
   \end{equation*}
 \end{theorem}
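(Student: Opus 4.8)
The plan is to estimate the error in the variational normal derivative by duality against test functions in $H^{1/2}(\Gamma)$, reducing everything to the already-established $H^1(\Omega)$-error estimate from Lemma~\ref{lem:error_estimate_l2} together with the stability result of Lemma~\ref{lem:stability_normal}. First I would observe that the exact normal derivative $\partial_n y$ and its discrete counterpart $\partial_n^h y_h$ are both characterized variationally: for the exact quantity, Green's identity gives $\langle \partial_n y, v\rangle = (\nabla y,\nabla \tilde v)_{L^2(\Omega)^2} - (f,\tilde v)_{L^2(\Omega)}$ for any $v\in H^{1/2}(\Gamma)$ and any $H^1$-extension $\tilde v$ of $v$, while the discrete definition \eqref{eq:discrete_normal_deriv} gives the analogous identity for $\partial_n^h y_h$ tested against $v_h\in V_h^\partial$.

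\medskip

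The key step is to subtract these two characterizations. For $v_h\in V_h^\partial$, let $\tilde v_h = B_h v_h$ be its discrete harmonic extension into $V_h$, so that $\tilde v_h\in V_{0h}$ whenever $v_h\equiv 0$. Using the Galerkin orthogonality $(\nabla(y-y_h),\nabla w_h)_{L^2(\Omega)^2}=0$ for all $w_h\in V_{0h}$, the difference of the two variational identities collapses to
\begin{equation*}
  (\partial_n y - \partial_n^h y_h, v_h)_{L^2(\Gamma)}
  = (\nabla(y-y_h),\nabla B_h v_h)_{L^2(\Omega)^2},
\end{equation*}
because the interior contributions cancel against the boundary value problem and only the gradient-difference term survives. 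I would then bound the right-hand side by $\|\nabla(y-y_h)\|_{L^2(\Omega)}\,\|\nabla B_h v_h\|_{L^2(\Omega)}$ and invoke the discrete-harmonic-extension stability $\|B_h v_h\|_{H^1(\Omega)}\le c\,\|v_h\|_{H^{1/2}(\Gamma)}$ already used in Lemma~\ref{lem:stability_normal}.

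\medskip

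To convert this into the claimed estimate I would take the supremum over $v_h\in V_h^\partial$. Since $\partial_n^h y_h\in V_h^\partial$ but $\partial_n y$ need not be, the supremum over the discrete space controls only the discrete part directly; I therefore split $\|\partial_n y-\partial_n^h y_h\|_{H^{-1/2}(\Gamma)}$ by inserting the $L^2(\Gamma)$-projection $Q_h(\partial_n y)$ and estimating $\|\partial_n y - Q_h(\partial_n y)\|_{H^{-1/2}(\Gamma)}$ separately via the approximation and stability properties of $Q_h$ in negative norms, controlled by the $W^{2,2}_{\vec\alpha}(\Omega)$-regularity of $y$ through a trace theorem for the normal derivative. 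The dominant term, $\|Q_h(\partial_n y)-\partial_n^h y_h\|_{H^{-1/2}(\Gamma)}$, is handled by the discrete supremum above, yielding $\|\partial_n y-\partial_n^h y_h\|_{H^{-1/2}(\Gamma)}\le c\,\|\nabla(y-y_h)\|_{L^2(\Omega)}$. Finally I would insert the $H^1(\Omega)$-error bound $\|y-y_h\|_{H^1(\Omega)}\le c\,h^{1-\overline{\vec\alpha}}(|y|_{W^{2,2}_{\vec\alpha}(\Omega)}+\|g\|_{W^{3/2,2}_{\vec\alpha}(\Gamma)})$ from Lemma~\ref{lem:error_estimate_l2} to conclude.

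\medskip

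\textbf{The main obstacle} I anticipate is the discrete inf-sup / negative-norm bookkeeping: because $\partial_n^h y_h$ lives only in $V_h^\partial$, one must justify that the $H^{-1/2}(\Gamma)$-norm is faithfully captured by testing against discrete functions, which requires either a discrete trace/inverse estimate on $V_h^\partial$ or a careful commuting-projection argument so that the projection error $\|\partial_n y - Q_h(\partial_n y)\|_{H^{-1/2}(\Gamma)}$ does not degrade the $h^{1-\overline{\vec\alpha}}$ rate. The quasi-uniformity of the mesh and the $H^{1/2}$-stability of $Q_h$ on $V_h^\partial$ should suffice, but verifying that the negative-norm projection error stays at order $h^{1-\overline{\vec\alpha}}$ rather than a worse power is the delicate point to get right.
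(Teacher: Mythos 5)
Your overall architecture coincides with the paper's proof: insert $Q_h(\partial_n y)$ via the triangle inequality, control the fully discrete part $\|Q_h(\partial_n y)-\partial_n^h y_h\|_{H^{-1/2}(\Gamma)}$ by the supremum over discrete test functions (using the $L^2(\Gamma)$-orthogonality of $Q_h$, Green's identity, and the identity $(\partial_n y-\partial_n^h y_h, v_h)_{L^2(\Gamma)} = (\nabla(y-y_h),\nabla B_h v_h)_{L^2(\Omega)^2}$ --- note you do not even need Galerkin orthogonality here, since the $(f,\cdot)$ terms cancel directly), then invoke the stability of the discrete harmonic extension and the $H^1(\Omega)$-estimate of Lemma~\ref{lem:error_estimate_l2}. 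That half of your argument is correct and is exactly the paper's treatment in \eqref{eq:discrete_part}.

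The genuine gap is the projection term $\|\partial_n y - Q_h(\partial_n y)\|_{H^{-1/2}(\Gamma)}$, which you gesture at (``approximation and stability properties of $Q_h$ in negative norms \dots through a trace theorem'') and then explicitly defer as ``the delicate point to get right.'' This is not bookkeeping; it is where the weighted regularity must enter quantitatively, and it occupies most of the paper's proof --- calling the discrete part ``dominant'' is misleading, since both terms carry the rate $h^{1-\overline{\vec\alpha}}$ and the weighted analysis lives entirely in the projection term. The paper proceeds in two steps: a duality argument gives $\|\partial_n y - Q_h(\partial_n y)\|_{H^{-1/2}(\Gamma)} \le c\, h^{1/2}\,\|\partial_n y - Q_h(\partial_n y)\|_{L^2(\Gamma)}$; then the $L^2(\Gamma)$ best-approximation property of $Q_h$ is combined with a Cl\'ement-type interpolation estimate in weighted norms from \cite{PW17}, namely $\|\partial_n y_0 - C_h(\partial_n y_0)\|_{L^2(\Gamma)} \le c\, h^{1/2-\overline{\vec\alpha}}\,\|y_0\|_{V^{2,2}_{\vec\alpha}(\Omega)}$, applied after the splitting $y = y_0 + \sum_j p_j\,\eta_j$ with $y_0\in V^{2,2}_{\vec\alpha}(\Omega)$, constants $p_j$ and radial cut-offs $\eta_j$ (so that $\partial_n(p_j\,\eta_j)\equiv 0$ on $\Gamma$), together with the $V$-/$W$-norm equivalence. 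A mere stability property of $Q_h$ cannot produce a positive power of $h$, and the unweighted trace argument ($y\in H^2(\Omega)$, hence $\partial_n y\in H^{1/2}(\Gamma)$) is unavailable because $y$ is only assumed to lie in $W^{2,2}_{\vec\alpha}(\Omega)$. If you want to avoid the weighted boundary interpolation machinery, your ``trace theorem'' route must be made precise, e.g.\ via the embedding $W^{2,2}_{\vec\alpha}(\Omega)\hookrightarrow H^{2-\overline{\vec\alpha}}(\Omega)$ (available since $\overline{\vec\alpha}<1/2$), the trace theorem giving $\partial_n y\in H^{1/2-\overline{\vec\alpha}}(\Gamma)$, and the standard projection estimate $\|\partial_n y - Q_h(\partial_n y)\|_{L^2(\Gamma)}\le c\,h^{1/2-\overline{\vec\alpha}}\,\|\partial_n y\|_{H^{1/2-\overline{\vec\alpha}}(\Gamma)}$; without that chain (or the paper's argument) the claimed rate $h^{1-\overline{\vec\alpha}}$ is not established.
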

 \begin{proof}
     Using the triangle inequality we split up the norm into an error term for the $L^2(\Gamma)$-projection onto $V_h^\partial$, and a fully discrete term, this is
  \begin{equation*}
    \|\partial_n y- \partial_n^h y_h\|_{H^{-1/2}(\Gamma)} \le \|\partial_n y - Q_h(\partial_n y)\|_{H^{-1/2}(\Gamma)} + \|Q_h(\partial_n y) - \partial_n^h y_h\|_{H^{-1/2}(\Gamma)}.
  \end{equation*}
  With a standard duality argument we obtain for the first term
  \begin{equation}\label{eq:int_error_duality}
    \|\partial_n y - Q_h(\partial_n y)\|_{H^{-1/2}(\Gamma)} \le c\,
    h^{1/2}\,\|\partial_n y - Q_h (\partial_n y)\|_{L^2(\Gamma)}.
  \end{equation}
  Next, we show a best-approximation error estimate in the $L^2(\Gamma)$-norm.
  To this end, we use the splitting splitting $y=y_0+p_j\,\eta_j$,
  see e.\,g.\ \cite[Theorem 5.6(2)]{NP94}, with a
  function $y_0\in V^{2,2}_{\vec\alpha}(\Omega)$, certain constants $p_j$,
  $j\in\mathcal C$, and smooth cut-off functions $\eta_j=\eta_j(|x-\boldsymbol c_j|)$ which are equal to one near $\boldsymbol c_j$ and have support contained in $\overline\Omega_R^j$. A similar argument has been already used in the proof
  of Lemma \ref{lem:reg_hoelder}. For functions belonging to $V^{2,2}_{\vec\alpha}(\Omega)$
  the estimate 
  \begin{equation*}
    \|\partial_n y_0 - C_h(\partial_n y_0)\|_{L^2(\Gamma)}
    \le c\, h^{1/2-\overline{\vec\alpha}}\,\|y_0\|_{V^{2,2}_{\vec\alpha}(\Omega)}
  \end{equation*}
  can be found in the proof of Theorem 9 in \cite{PW17} for some
  Cl\'ement-type interpolation operator $C_h\colon L^1(\Gamma)\to V_h^\partial$.
  Note that $\partial_n (p_j\,\eta_j)$ and its interpolant vanish and thus, we easily deduce an estimate for the function $y\in W^{2,2}_{\vec\alpha}(\Omega)$.
  Moreover, due to norm equivalences of $V$- and $W$-spaces \cite[Theorem 5.6(2)]{NP94},
  we obtain
  $\|y_0\|_{V^{2,2}_{\vec\alpha}(\Omega)} + \sum_{j\in\mathcal C} |y(\boldsymbol c_j)| \sim \|y\|_{W^{2,2}_{\vec\alpha}(\Omega)}$,
  which leads together with the previous estimate and \eqref{eq:int_error_duality} to
  \begin{equation*}
    \|\partial_n y - Q_h(\partial_n y)\|_{H^{-1/2}(\Gamma)} \le c\, h^{1-\overline{\vec\alpha}}\,\|y\|_{W^{2,2}_{\vec\alpha}(\Omega)}.
  \end{equation*}

 With the discrete stability used already in \eqref{eq:discrete_stability},
 the definition of $\partial_n^h$ from \eqref{eq:discrete_normal_deriv},
 orthogonality of the $L^2(\Gamma)$-projection $Q_h$
 and Greens identity, we deduce
 \begin{align}\label{eq:discrete_part}
   \|Q_h(\partial_n y) - \partial_n^h y_h\|_{H^{-1/2}(\Gamma)} 
   &\le c\,\sup_{\genfrac{}{}{0pt}{}{\varphi_h\in V_h^\partial}{\varphi_h\not\equiv 0}} \frac{(Q_h(\partial_n y) - \partial_n^h y_h,\varphi_h)_{L^2(\Gamma)}}{\|\varphi_h\|_{H^{1/2}(\Gamma)}}\nonumber\\
   &\le c\,\sup_{\genfrac{}{}{0pt}{}{\varphi_h\in V_h^\partial}{\varphi_h\not\equiv 0}} \frac{(\nabla (y-y_h),\nabla \mathcal E_h \varphi_h)_{L^2(\Omega)^2}}{\|\varphi_h\|_{H^{1/2}(\Gamma)}}
 \end{align}
 with an arbitrary discrete extension operator $\mathcal E_h\colon V_h^\partial\to V_h$.
 In the present case we will use the discrete harmonic extension of $\varphi_h$, this is, $\mathcal E_h = B_h$ which satisfies the estimate $\|\nabla B_h\varphi_h\|_{L^2(\Omega)} \le c\, \|\varphi_h\|_{H^{1/2}(\Gamma)}$. Together with the $H^1(\Omega)$-error estimate from Lemma \ref{lem:error_estimate_l2}
 applied to $\|\nabla(y-y_h)\|_{L^2(\Omega)}$ we conclude the assertion.
\end{proof}
As already mentioned before the previous theorem, we expect a convergence rate higher than one
for convex domains, provided that the input data are more regular. 
The proof of sharp convergence rates in this case is more complicated
and we start with some notation required in the following.
As in \cite{PW17} we introduce a dyadic decomposition towards the boundary of $\Omega$, namely
\begin{equation}\label{eq:def_Omega_J}
 \Omega_J:=\{x\in\Omega\colon \rho(x)\in (d_{J+1},d_J)\}\quad\mbox{for}\ J=-1,\ldots,I,
\end{equation}
where $\rho(x):=\dist(x,\Gamma)$.
We set $d_J:=2^{-J}$ for $J=0,\ldots,I$ and use modifications for the interior domain by $d_{-1}:=\diam(\Omega)$, and the outermost domain by $d_{I+1}:=0$.
Note that this forms a complete decomposition of $\Omega$, i.\,e.,
\begin{equation}\label{eq:dyadic}
 \overline \Omega = \bigcup_{J=-1}^I \overline\Omega_J.
\end{equation}
In the following we will frequently exploit the following two properties that can be directly concluded from the definition:
\begin{equation}\label{eq:properties_dyadic}
 |\Omega_J| \sim d_J,\qquad \inf_{x\in\Omega_J} \dist(x,\Gamma)\sim\sup_{x\in\Omega_J} \dist(x,\Gamma) \sim d_J\ (J\ne I).
\end{equation}
The termination index $I$ is chosen such that $d_I = c_I h$ with some mesh-independent constant $c_I>1$ specified later.
This implies that $I\sim \lvert\ln h\rvert$.
Moreover, we introduce the patches with the adjacent subsets given by
\begin{align*}
  \Omega_J'&:=\Omega_{\min\{I,J+1\}}\cup\overline\Omega_J\cup \Omega_{\max\{-1,J-1\}},\\
  \Omega_J''&:=\Omega_{\min\{I,J+1\}}'\cup\overline\Omega_J\cup \Omega_{\max\{-1,J-1\}}'.
\end{align*}
Note that the patches satisfy the properties \eqref{eq:properties_dyadic}
as well due to $d_{J+1} \sim d_J$ for $J=-1,\ldots,I-1$.

We start the proof of the desired finite element error estimate with some local error estimates for the nodal interpolant defined in \eqref{eq:Ih_tilde}.
\begin{lemma}\label{lem:interpolation_Omega_J}
  Assume that $\Omega$ is convex and
  $y\in  W^{2,\infty}_{\vec\beta}(\Omega)$, $g\in W^{2,2}_{\vec\gamma}(\Gamma)$
  with $\vec\beta\in[0,2)^d$, $\vec\gamma\in [0,3/2)^d$. Then, there holds the estimate
  \begin{align*}
    &\|y-\tilde I_h y\|_{L^2(\Omega_J)} + h\,\|\nabla (y-\tilde I_h y)\|_{H^{1}(\Omega_J)}\\
    &\qquad \le c\, h^{2} d_J^{\min\{1/2,1-\overline{\vec\beta}\}}\,\lvert\ln h\rvert^{z/2}\,|y|_{W^{2,\infty}_{\vec\beta}(\Omega_J')}
  + \delta_{J,I}\,h^{5/2-\overline{\vec\gamma}}\,|g|_{W^{2,2}_{\vec\gamma}(\Gamma)},
  \end{align*}
  with $z=1$ if $\overline{\vec\beta}=1/2$, and $z=0$ if $\overline{\vec\beta}\ne 1/2$.
\end{lemma}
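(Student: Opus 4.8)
The goal is a localized interpolation error estimate on each dyadic ring $\Omega_J$, with the extra boundary correction term $\mathcal E_h(Q_h g - I_h g)$ only contributing on the outermost ring $J=I$. The plan is to estimate the two pieces of $\tilde I_h y = I_h y + \mathcal E_h(Q_h g - I_h g)$ separately. First I would handle the standard nodal interpolant $I_h y$, and afterwards bound the extension correction, which lives only near $\Gamma$ and hence touches only $\Omega_I$.

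\textbf{The interior piece $I_h y$.}
On a fixed ring $\Omega_J$ with $J<I$, the distance to the boundary is comparable to $d_J$ by \eqref{eq:properties_dyadic}, so on the union of elements meeting $\Omega_J$ the weight $r_j$ behaves like a constant only away from the corners; near a corner the weight degenerates and one must track the factor $r_j^{\vec\beta}$ carefully. The idea is to sum the elementwise estimate $\|y-I_h y\|_{L^2(T)} + h\,|y-I_h y|_{H^1(T)} + h^2 |y - I_h y|_{H^2(T)}\le c\,h^2\,|y|_{W^{2,\infty}(T)}\,|T|^{1/2}$ over all elements $T$ meeting $\Omega_J$. Using the weighted regularity $y\in W^{2,\infty}_{\vec\beta}(\Omega)$, on each element I replace $|y|_{W^{2,\infty}(T)}$ by $r_{j,T}^{-\overline{\vec\beta}}\,|y|_{W^{2,\infty}_{\vec\beta}(T)}$. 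Summing $|T|$ over the elements in $\Omega_J$ gives the ring measure $|\Omega_J|\sim d_J$, and the factor $d_J^{1/2}$ emerges from $|T|^{1/2}$ after the $L^2$ accumulation. The key computation is to control $\sum_T |T|\, r_{j,T}^{-2\overline{\vec\beta}}$ on the strip $\Omega_J$: this is essentially an integral $\int_{\Omega_J} r_j^{-2\overline{\vec\beta}}$, and its size depends on whether $\overline{\vec\beta}$ is below, equal to, or above $1/2$. This is exactly where the $d_J^{\min\{1/2,1-\overline{\vec\beta}\}}$ exponent and the logarithmic factor $|\ln h|^{z/2}$ (with $z=1$ precisely at the borderline $\overline{\vec\beta}=1/2$) are produced — the dyadic strip intersects corner neighborhoods where the weight integral is borderline divergent.

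\textbf{The boundary correction piece.}
The term $\mathcal E_h(Q_h g - I_h g)$ vanishes at all interior nodes, so by the definition of the zero extension $\mathcal E_h$ it is supported in the elements touching $\Gamma$, hence contributes only on $\Omega_I$; this explains the Kronecker factor $\delta_{J,I}$. On $\Omega_I$ I would bound $\|\mathcal E_h(Q_h g-I_h g)\|_{L^2(\Omega_I)}$ and its $H^1$-seminorm using the extension estimate $\|\nabla\mathcal E_h\phi_h\|_{L^2(\Omega)}\le c\,h^{-1/2}\|\phi_h\|_{L^2(\Gamma)}$ (cited from \cite{MRV13}) together with an $L^2$ analogue, and then invoke Lemma \ref{lem:int_error_g} with the $W^{2,2}_{\vec\gamma}$-regularity of $g$ to get $\|Q_h g - I_h g\|_{L^2(\Gamma)}\le \|g-Q_hg\|_{L^2(\Gamma)} + \|g-I_hg\|_{L^2(\Gamma)}\le c\,h^{2-\overline{\vec\gamma}}|g|_{W^{2,2}_{\vec\gamma}(\Gamma)}$. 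Combining the $h^{-1/2}$ (or, for $L^2$, appropriate) scaling with the width $d_I\sim h$ of the last ring yields the claimed $h^{5/2-\overline{\vec\gamma}}$ rate after accounting for one additional power of $d_I^{1/2}\sim h^{1/2}$ from the ring measure.

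\textbf{Main obstacle.}
The delicate part is the borderline weight bookkeeping on the dyadic strips: getting the sharp exponent $\min\{1/2,1-\overline{\vec\beta}\}$ and, in particular, isolating the logarithmic factor exactly at $\overline{\vec\beta}=1/2$ requires splitting each ring $\Omega_J$ into its corner-adjacent part (where $r_j\sim$ its distance to $\boldsymbol c_j$ varies) and its corner-free part (where $r_j\sim d_J$), estimating the weighted element-area sum $\int_{\Omega_J'} r_j^{-2\overline{\vec\beta}}$ on each, and recognizing that the enlarged patch $\Omega_J'$ (rather than $\Omega_J$) must appear on the right so that the elements straddling ring boundaries are covered. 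Managing these overlaps so the patches $\Omega_J'$ have bounded overlap — which is what makes a later summation over $J$ telescope cleanly — is the technically demanding step, whereas the boundary correction term is comparatively routine once Lemma \ref{lem:int_error_g} is in hand.
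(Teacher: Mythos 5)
Your strategy is in essence the paper's own: split $\tilde I_h y$ into the nodal interpolant $I_h y$ and the zero-extension correction $\mathcal E_h(Q_h g - I_h g)$, handle the correction only on $\Omega_I$ via $\|\mathcal E_h v_h\|_{L^2(\Omega)} + h\,\|\nabla \mathcal E_h v_h\|_{L^2(\Omega)} \le c\, h^{1/2}\|v_h\|_{L^2(\Gamma)}$ combined with Lemma \ref{lem:int_error_g}, and extract the exponent $\min\{1/2,1-\overline{\vec\beta}\}$ together with the logarithm from a weighted area computation over the strip. Your continuous integral $\int_{\Omega_J} r_j^{-2\overline{\vec\beta}}$ is precisely what the paper evaluates in discretized form through its second dyadic decomposition into the parallelograms $\Omega_{J,K}^{j,\pm}$ and the geometric series \eqref{eq:sum_over_K}; the three cases (below, at, above the threshold) coincide, and the borderline case produces the same $\lvert\ln h\rvert^{1/2}$. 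So far the two arguments differ only cosmetically.

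There is, however, a genuine gap: the elements that \emph{touch} a corner, i.e.\ those $T$ with $r_{j,T}=0$. Such elements are contained in the outermost ring $\Omega_I$ (their diameter is at most $h<d_I$), so they unavoidably enter the case $J=I$, and your mechanism fails exactly there. The swap $|y|_{W^{2,\infty}(T)}\le r_{j,T}^{-\beta_j}\,|y|_{W^{2,\infty}_{\beta_j}(T)}$ is vacuous when $r_{j,T}=0$, and you cannot fall back on the unweighted elementwise estimate either: for any $\beta_j>0$ the quantity $|y|_{W^{2,\infty}(T)}$ may be infinite on a corner element (think of $|\nabla^2 y|\sim r_j^{-\beta_j}$), and since the lemma admits $\vec\beta\in[0,2)^d$, for $\beta_j\ge 1$ the function need not even lie in $H^2(T)$ there, so an $H^2$-based standard estimate is also unavailable. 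What is required is a dedicated \emph{weighted} interpolation error estimate on corner elements; the paper invokes \cite[Corollary 3.33]{Pfe}, which yields $\|y-I_h y\|_{L^2(T)} + h\,\|\nabla(y-I_h y)\|_{L^2(T)} \le c\, h^{3-\beta_j}\,|y|_{W^{2,\infty}_{\beta_j}(T)}$ directly in terms of the weighted seminorm, and then uses $d_I\sim h$ to write $h^{3-\beta_j}=h^2 d_I^{1-\beta_j}\le h^2 d_I^{\min\{1/2,\,1-\beta_j\}}$, which fits the claimed bound. Without this ingredient (or a proof of an equivalent corner-element estimate, which is not a one-line reduction to the unweighted Bramble--Hilbert argument), your proof does not cover the stated range of weights; everything else in your plan, including the patch-overlap bookkeeping and the $h^{5/2-\overline{\vec\gamma}}$ boundary term, is sound.
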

\begin{proof}   
  Throughout the proof we will hide the constant $c_I$ in the generic constant $c$ as
  it is not needed for the terms considered here.
  For elements $T\in\mathcal T_h$ touching a corner, i.\,e., $r_{j,T}=0$ for some $j\in\mathcal C$, we directly deduce the estimate
  \begin{align}\label{eq:int_error_corner}
    &\|y-I_h y\|_{L^2(T)} + h\,\|\nabla(y-I_h y)\|_{L^2(T)} \nonumber\\
    &\quad \le c\, h^{3-\beta_j}\,|y|_{W^{2,\infty}_{\beta_j}(T)}\le c\, h^2\, d_I^{1-\beta_j}\,|y|_{W^{2,\infty}_{\beta_j}(T)},
  \end{align}
  which follows from the estimate from \cite[Corollary 3.33]{Pfe} and
  the property $d_I \sim h$.
  On that part of $\Omega_J$ excluding the elements touching a corner
  we obtain for $J=-1,\ldots,I$ with a standard estimate
  \begin{equation}\label{eq:int_error_H2_reg}
    \|y-I_h y\|_{L^2(\Omega_J\setminus S_h)} + h\,\|\nabla(y-I_h y)\|_{L^2(\Omega_J\setminus S_h)} \le c\, h^2\,\|\nabla^2 y\|_{L^2(\Omega_J'\setminus S_h)},  
  \end{equation}
  where $S_h:=\cup\{T\in\mathcal T_h\colon r_{j,T} = 0,\ j\in\mathcal C\}$.
  It remains to bound the term on the right-hand side of \eqref{eq:int_error_H2_reg}.
  Therefore, we bound  $\|\nabla^2 y\|_{L^2(\Omega_{\tilde J}\setminus S_h)}$
  for $\tilde J=\max\{-1,J-1\},\ldots,\min\{J+1,I\}$ by some weighted $W^{2,\infty}(\Omega)$-norm
  of $y$. This is done by an application of the H\"older inequality on a further dyadic decomposition
  of $\Omega_{\tilde J}$ with respect to the corners.
  A similar technique is used e.\,g.\ in \cite{APW17,Win15} where error estimates in $L^2(\Gamma)$ for the Neumann problem in three-dimensional polyhedral domains are derived. 
  Therein, the domain is decomposed twice into dyadic subsets to resolve both edge and corner singularities.
  Following these ideas we introduce
\begin{equation*}
  d_{J,K}:= 2^K\,d_J = 2^{K-J},
\end{equation*}
and define the subdomains 
\begin{equation}\label{eq:def_parallelogram}
  \Omega_{J,K}^{j,+} := \{x\in\Omega\colon d_{J+1} < \dist(x,\tilde \Gamma_j) \le d_J,\ 
  d_{J,K} < \dist(x,\tilde \Gamma_{j-1}) \le d_{J,K+1}\},
\end{equation}
for $J=0,\ldots,I$, $K=0,\ldots,J-1$ and $j\in\mathcal C$.
Here $\tilde\Gamma_{j}$ stands for
the straight line which coincides with the boundary edge $\Gamma_j$.
Each domain $\Omega_{J,K}^{j,+}$ is a parallelogram bounded by that parallels 
to $\tilde \Gamma_j$ having distance $d_{J+1}$ and $d_{J}$ from $\tilde \Gamma_j$,
and by that parallels to 
$\tilde \Gamma_{j-1}$ having distance $d_{J,K}$ and $d_{J,K+1}$ from $\tilde \Gamma_{j-1}$.
In a similar way we define the subdomains $\Omega_{J,K}^{j,-}$ by simply changing the roles of $\tilde\Gamma_j$ and $\tilde\Gamma_{j-1}$ in the definition \eqref{eq:def_parallelogram}.
Note that $\Omega_{J,0}^j := \Omega_{J,0}^{j,+} = \Omega_{J,0}^{j,-}$.
These subdomains are illustrated in Figure \ref{fig:dyadic}.
\begin{figure}[tb]
  \begin{center}
    \includegraphics[width=.9\textwidth]{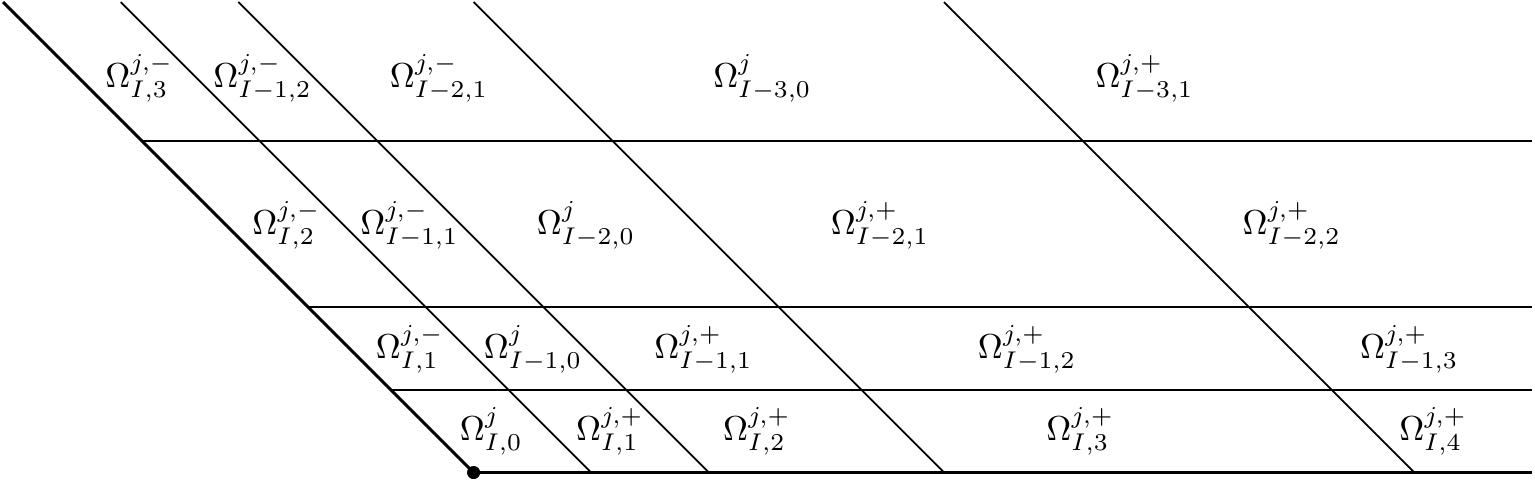}
\end{center}
 \caption{Definition of the domains $\Omega_{J,K}^{j,\pm}$.}
 \label{fig:dyadic}
\end{figure}
By construction we have the property
\begin{equation}\label{eq:volume_Omega_JK}
 |\Omega_{J,K}^{j,\pm}| \sim d_J\, d_{J,K} = d_J^2\,2^K.
\end{equation}
Moreover, we will exploit the property
\begin{equation}\label{eq:prop_distance_corner}
  \inf_{x\in\Omega_{J,K}^{\pm,j}\setminus S_h} r_j(x) \sim \sup_{x\in\Omega_{J,K}^{\pm,j}\setminus S_h} r_j(x)
  \sim d_{J,K},
\end{equation}
for all $J=0,\ldots,I$, $K=0,\ldots,J-1$ and $j\in\mathcal C$,
which follows directly from the definition of the sets $\Omega_{J,K}^{\pm,j}$.
This allows us to locally trade the quantities $d_{J,K}$ by the weights $r_j(x)$ contained
in the weighted Sobolev spaces.
The union of the domains introduced in \eqref{eq:def_parallelogram}
leads to a covering of our initial decomposition \eqref{eq:dyadic} near a ball of radius $1$
around the corner $\boldsymbol c_j$, $j\in\mathcal C$, i.\,e.,
\begin{equation}\label{eq:double_dyadic}
  \Omega_J\cap \Omega_R^j \subset \bigcup_{K=0}^{J-1} \Omega_{J,K}^{j,\pm},\qquad 
  J=0,\ldots,I.
\end{equation}

In order to bound the term on the right-hand side of \eqref{eq:int_error_H2_reg}, we
apply the H\"older inequality on each subset $\Omega_{\tilde J,K}^{j,\pm}$
using the property \eqref{eq:volume_Omega_JK},
and insert appropriate weights taking \eqref{eq:prop_distance_corner}
into account. This implies
\begin{align*}
  \|\nabla^2 y\|_{L^2(\Omega_{\tilde J}\cap\Omega_R^j\setminus S_h)}^2&\le \sum_{K=0}^{\tilde J-1} d_{\tilde J}\,d_{\tilde J,K}^{1-2\beta_j}\,\|r_j^{\beta_j}\,\nabla^2 y\|_{L^\infty(\Omega_{\tilde J,K}^{j,\pm})}^2\\
&\le c\, d_{\tilde J}^{\min\{1,2-2\beta_j\}}\,\lvert\ln h\rvert^z \max_{K=0,\ldots,\tilde J-1} |y|_{W^{2,\infty}_{\beta_j}(\Omega_{\tilde J,K}^{j,\pm})}^2, 
\end{align*}
where the last step follows from the limit value of the geometric series
and the property $\tilde J \le I \sim \lvert\ln h\rvert$, i.\,e., 
\begin{equation}\label{eq:sum_over_K}
  \sum_{K=0}^{\tilde J-1} d_{\tilde J,K}^t = d_{\tilde J}^{t}\sum_{K=0}^{\tilde J-1}(2^K)^t 
  %= \left(\frac{(2^t)^{\tilde J}-1}{2^t-1} \right) 
  \le c\cdot
  \begin{cases}
      d_{\tilde J}^t, &\mbox{if}\ t<0,\\
      1, &\mbox{if}\ t>0,\\
      \lvert\ln h\rvert, &\mbox{if}\ t=0,
      \end{cases}
      \qquad  t:=1-2\beta_j.
    \end{equation}
 With the H\"older inequality we obtain a similar estimate on the set $\Omega_{\tilde J}\setminus\cup_{j\in\mathcal C} \Omega_R^j$, this is,
 \begin{equation*}
   \|\nabla^2 y\|_{L^2(\Omega_{\tilde J}\setminus\cup_{j\in\mathcal C}\Omega_R^j)}^2
   \le c\, d_J\, \|\nabla^2 y\|_{L^\infty(\Omega_{\tilde J}\setminus\cup_{j\in\mathcal C} \Omega_R^j)}^2
   \le c\, d_J\, |y|_{W^{2,\infty}_{\vec\beta}(\Omega_{\tilde J})}^2.
 \end{equation*}
 Combining the previous estimates and summing up
 over the indices
 $\tilde J=\max\{-1,J-1\},\ldots,\min\{J+1,I\}$ finally yields together with \eqref{eq:int_error_corner}
 \begin{align}\label{eq:hoelder_Omega_J}
   &\|y-I_h y\|_{L^2(\Omega_J)} + h\, \|\nabla(y-I_h y)\|_{L^2(\Omega_J)} \nonumber\\
   &\qquad\le c\, h^2\,d_J^{\min\{1/2,1-\overline{\vec\beta}\}}\,\lvert\ln h\rvert^{z/2}\,|y|_{W^{2,\infty}_{\vec\beta}(\Omega_J')}.
 \end{align}
 In case of $J=I$, we still have to discuss the boundary terms to obtain an estimate
 for $\tilde I_h$. This follows from
 \begin{equation*}
   \|\mathcal E_h v_h\|_{L^2(\Omega)} + h\,\|\nabla(\mathcal E_h v_h)\|_{L^2(\Omega)}
   \le c\, h^{1/2}\,\|v_h\|_{L^2(\Gamma)},\qquad v_h\in V_h^\partial,
 \end{equation*}
 and the estimate derived in Lemma \ref{lem:int_error_g}.
\end{proof}

As an intermediate result we prove a weighted $L^2(\Omega)$-error estimate.
The weight function we use is defined by
\begin{equation*}
  \sigma(x) = d_I + \dist(x,\Gamma).
\end{equation*}
Note that such a weight function has been discussed already in Section \ref{sec:weighted_est_bd}.
The regularizer in the present situation is the width of the outermost subset $\Omega_I$.
Here, the relation between the weight function $\sigma$ and the dyadic decomposition \eqref{eq:dyadic}
becomes clear, as the definition directly implies
\begin{equation}\label{eq:trade_sigma_dJ}
  \sigma(x) \sim d_J \qquad \mbox{for}\ x\in\Omega_J,\ J=-1,\ldots,I.
\end{equation}
\begin{lemma}\label{lem:weighted_l2_estimate}
  Assume that $\Omega\subset\mathbb R^2$ is a convex polygonal domain.
  Let $y\in W^{2,\infty}_{\vec\beta}(\Omega)$ and $g\in W^{2,2}_{\vec\gamma}(\Gamma)$
  with $\vec\beta\in [0,2)^d$ and $\vec\gamma\in [0,3/2)^d$.
  Moreover, let $g_h:=Q_h g$. Then the solutions of \eqref{eq:fe_form}
  fulfill the error estimate
 \begin{equation}
   \|\sigma^{-2}\,(y-y_h)\|_{L^2(\Omega)}
   \le c\, \left(h^{\min\{1/2,1-\overline{\vec\beta}\}}\,\lvert\ln h\rvert^{z/2}\,|y|_{W^{2,\infty}_{\vec\beta}(\Omega)}
  + h^{1/2-\overline{\vec\gamma}}\,|g|_{W^{2,2}_{\vec\gamma}(\Gamma)}\right),
 \end{equation}
 provided that $c_I$ is sufficiently large.
\end{lemma}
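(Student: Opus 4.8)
The plan is to set up a duality argument against the weighted residual $\sigma^{-4}(y-y_h)$ and to localize the resulting expression over the dyadic decomposition \eqref{eq:def_Omega_J}. Let $w\in H^1_0(\Omega)$ be the weak solution of $-\Delta w=\sigma^{-4}(y-y_h)$, which is admissible since $\sigma\ge d_I>0$ renders the right-hand side an $L^2(\Omega)$-function; then
\begin{equation*}
  \|\sigma^{-2}(y-y_h)\|_{L^2(\Omega)}^2=(\sigma^{-4}(y-y_h),y-y_h)_{L^2(\Omega)}.
\end{equation*}
Applying Green's identity together with the Galerkin orthogonality $(\nabla(y-y_h),\nabla v_h)_{L^2(\Omega)^2}=0$ for $v_h\in V_{0h}$, with the choice $v_h=I_h w\in V_{0h}$ (legitimate because $w$ is continuous and vanishes on $\Gamma$), I would rewrite the right-hand side as
\begin{equation*}
  (\nabla(y-y_h),\nabla(w-I_h w))_{L^2(\Omega)^2}-(g-Q_h g,\partial_n w)_{L^2(\Gamma)},
\end{equation*}
where the boundary term stems from $(y-y_h)|_\Gamma=g-Q_h g$.

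First I would split the interior term as $\sum_{J=-1}^{I}(\nabla(y-y_h),\nabla(w-I_h w))_{L^2(\Omega_J)^2}$ and estimate each summand by Cauchy--Schwarz. On the dual side I bound $\|\nabla(w-I_h w)\|_{L^2(\Omega_J)}\le c\,h\,\|\nabla^2 w\|_{L^2(\Omega_J')}$ and invoke the interior regularity estimate of Lemma \ref{lem:interior_H2} on the patch, with the carved-out distance $d\sim d_J$, to obtain
\begin{equation*}
  \|\nabla^2 w\|_{L^2(\Omega_J')}\le c\left(\|\sigma^{-4}(y-y_h)\|_{L^2(\Omega_J'')}+d_J^{-1}\,\|\nabla w\|_{L^2(\Omega_J'')}\right).
\end{equation*}
Since $\sigma\sim d_J$ on $\Omega_J$ by \eqref{eq:trade_sigma_dJ} and $d_I=c_I h\le d_J$, the prefactor $h\,d_J^{-1}\le c_I^{-1}$ is small; this is the mechanism that will later allow a kickback once $c_I$ is chosen large. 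The global factor $\|\nabla w\|_{L^2(\Omega)}$ produced by this step I would control through the weighted a priori estimate of Lemma \ref{lem:weighted_h1}, which gives $\|\nabla w\|_{L^2(\Omega)}\le c\,\|\sigma\cdot\sigma^{-4}(y-y_h)\|_{L^2(\Omega)}=c\,\|\sigma^{-3}(y-y_h)\|_{L^2(\Omega)}$.

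On the primal side I would use the splitting $y-y_h=(y-\tilde I_h y)+\theta$ with $\theta:=\tilde I_h y-y_h\in V_{0h}$. The interpolation contribution $\|\nabla(y-\tilde I_h y)\|_{L^2(\Omega_J)}$ is controlled directly by Lemma \ref{lem:interpolation_Omega_J}, while $\|\nabla\theta\|_{L^2(\Omega_J)}$ is reduced, by an inverse estimate on the quasi-uniform mesh, to $c\,h^{-1}\|\theta\|_{L^2(\Omega_J')}$ and further to $\|y-y_h\|_{L^2(\Omega_J')}+\|y-\tilde I_h y\|_{L^2(\Omega_J')}$; using $\sigma\sim d_J$ this reproduces a term proportional to $\|\sigma^{-2}(y-y_h)\|$ carrying the small factor from the previous paragraph, which I would absorb into the left-hand side. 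Collecting the remaining genuinely local contributions and summing the geometric series in $J$ exactly as in \eqref{eq:sum_over_K}---a sum dominated by the outermost index $J=I$ with $d_I\sim h$, and that produces the logarithmic factor $\lvert\ln h\rvert^{z/2}$ precisely in the borderline case $\overline{\vec\beta}=1/2$---I expect the interpolation part to reproduce the rate $h^{\min\{1/2,1-\overline{\vec\beta}\}}$, and the contributions supported on $\Omega_I$ (including the $\delta_{J,I}$ term of Lemma \ref{lem:interpolation_Omega_J} and the boundary term, the latter handled by inserting $Q_h$, exploiting its $L^2(\Gamma)$-orthogonality, and applying Lemma \ref{lem:int_error_g}) to reproduce the boundary rate $h^{1/2-\overline{\vec\gamma}}\,|g|_{W^{2,2}_{\vec\gamma}(\Gamma)}$.

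The step I expect to be the main obstacle is the weighted bookkeeping inside the localized duality: one has to track the interplay between the $d_J^{-1}\|\nabla w\|$ part of the interior regularity estimate, the weighted a priori bound for $w$, and the $\|\sigma^{-2}(y-y_h)\|$ contribution created on the primal side, and then show that all of these can be combined into a single term of the form $c\,c_I^{-1}\,\|\sigma^{-2}(y-y_h)\|_{L^2(\Omega)}$ that is absorbed on the left. Making this kickback rigorous while simultaneously keeping every geometric series over $J$ (and over the secondary dyadic index hidden in Lemma \ref{lem:interpolation_Omega_J}) convergent is the delicate point, and it is exactly the reason for the hypothesis that $c_I$ be sufficiently large.
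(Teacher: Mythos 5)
Your skeleton (weighted duality, dyadic localization, interior regularity for the dual solution, kickback via a large $c_I$) is the paper's, but the step where you bound the discrete part of the primal error breaks the argument. The paper controls $\|\nabla(y-y_h)\|_{L^2(\Omega_J)}$ by the \emph{local} finite element error estimate \eqref{eq:local_fe_error} from \cite{DGS11}, whose pollution term is $d_J^{-1}\,\|y-y_h\|_{L^2(\Omega_J')}$ --- the prefactor scales with the strip distance $d_J$, not with the mesh size. Your substitute (triangle inequality with $\theta=\tilde I_h y-y_h$ plus an inverse estimate) produces instead $h^{-1}\,\|y-y_h\|_{L^2(\Omega_J')}$, i.e.\ you lose a factor $d_J/h$ on every strip, and this loss is fatal for the kickback. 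Concretely, in your unnormalized notation, pairing $h^{-1}\|y-y_h\|_{L^2(\Omega_J')}$ with the dual bound $\|\nabla(w-I_hw)\|_{L^2(\Omega_J)}\le c\,h\,\bigl(\|\sigma^{-4}(y-y_h)\|_{L^2(\Omega_J'')}+d_J^{-1}\|\nabla w\|_{L^2(\Omega_J'')}\bigr)$ and using $\sigma\sim d_J$ gives per strip
\begin{equation*}
  c\,\|\sigma^{-2}(y-y_h)\|_{L^2(\Omega_J'')}^2
  \;+\; c\,d_J\,\|\sigma^{-2}(y-y_h)\|_{L^2(\Omega_J')}\,\|\nabla w\|_{L^2(\Omega_J'')}.
\end{equation*}
Summing over $J$, the first contribution is $c\,\|\sigma^{-2}(y-y_h)\|_{L^2(\Omega)}^2$ with an \emph{absolute} constant (no factor $c_I^{-1}$ at all), and the second, using the only available bound $\|\nabla w\|_{L^2(\Omega)}\le c\,\|\sigma^{-3}(y-y_h)\|_{L^2(\Omega)}\le c\,d_I^{-1}\|\sigma^{-2}(y-y_h)\|_{L^2(\Omega)}$ from Lemma~\ref{lem:weighted_h1}, is of size $c\,c_I^{-1}h^{-1}\diam(\Omega)\,\|\sigma^{-2}(y-y_h)\|_{L^2(\Omega)}^2$. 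Neither term can be absorbed into the left-hand side, no matter how large the $h$-independent constant $c_I$ is. In the paper the prefactors combine to $d_J^{-1}\cdot h\,d_J^{-1}\cdot d_J^{2}=h$, and $h\,\|\nabla w\|_{L^2(\Omega)}\le c\,c_I^{-1}$, which is exactly what makes the absorption work; your observation that ``$h\,d_J^{-1}\le c_I^{-1}$ is small'' is correct for the dual interpolation factor but is cancelled by your $h^{-1}$ pollution prefactor. The missing idea is therefore the interior energy error estimate of \cite{DGS11} (or an equivalent superapproximation argument); a plain inverse estimate cannot replace it.

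A second, smaller gap: you invoke Lemma~\ref{lem:interior_H2} on every patch, but for $J=I-1,I$ the sets $\Omega_J''$ touch $\Gamma$, so the interior regularity estimate is not applicable there. The paper treats these two strips separately via the global a priori estimate $\|\nabla^2 w\|_{L^2(\Omega)}\le c\,\|\Delta w\|_{L^2(\Omega)}$, which is precisely where the convexity of $\Omega$ enters on the dual side; your outline never uses convexity for $w$, which signals that these strips have not been accounted for. The remainder of your plan (treatment of the boundary term via $Q_h$-orthogonality and Lemma~\ref{lem:int_error_g}, the interpolation-times-interpolation product, and the geometric series producing the rates and the logarithm at $\overline{\vec\beta}=1/2$) does follow the paper and would go through once the two points above are repaired --- but repairing them essentially means reinstating the paper's proof.
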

\begin{proof}
 We follow the arguments of the Nitsche trick using the slightly modified dual problem
 \begin{equation}\label{eq:dual_problem}
  -\Delta w = \sigma^{-2}\,\psi\quad\mbox{in}\quad \Omega,\qquad w=0\quad\mbox{on}\quad\Gamma,
 \end{equation}
 with $\psi = \sigma^{-2}\,(y-y_h)  / \|\sigma^{-2}\,(y-y_h)\|_{L^2(\Omega)}$. Note that $\|\psi\|_{L^2(\Omega)} = 1$. 
 With partial integration and the Galerkin orthogonality we conclude
 \begin{align}\label{eq:nitsche_trick}
   \|\sigma^{-2}\,(y-y_h)\|_{L^2(\Omega)} &= (y-y_h, \sigma^{-2}\,\psi)_{L^2(\Omega)}\nonumber\\
   &=  (\nabla (y-y_h),\nabla (w - I_h w))_{L^2(\Omega)^2} - (g-g_h,\partial_n w)_{L^2(\Gamma)}.
 \end{align}
 
 First, we consider the second term on the right-hand side of \eqref{eq:nitsche_trick}.
 With the orthogonality of the $L^2(\Gamma)$-projection we obtain 
 \begin{equation}\label{eq:boundary_approximation_term}
   (g-g_h,\partial_n w)_{L^2(\Gamma)} \le c\, h^{1/2}\,\|g-I_h g\|_{L^2(\Gamma)}\,\|\partial_n w\|_{H^{1/2}(\Gamma)} \le c\, h^{1/2-\overline{\vec\gamma}}\,|g|_{W^{2,2}_{\vec\gamma}(\Gamma)},   
 \end{equation}
 where the second step follows from Lemma \ref{lem:int_error_g}
 and the estimate $\|\partial_n w\|_{H^{1/2}(\Gamma)} \le c\, \|w\|_{H^2(\Omega)} \le c\, \|\sigma^{-2}\,\psi\|_{L^2(\Gamma)} \le c\, h^{-2}$ which is a consequence of a trace theorem, an a priori estimate, and $\sigma(x) \ge d_I \sim h$ for all $x\in\Omega$.
 
 Next, we discuss the first term on the right-hand side of \eqref{eq:nitsche_trick}.
 A subset-wise application of the Cauchy-Schwarz inequality
 with respect to the dyadic decomposition \eqref{eq:dyadic} yields
 \begin{equation}\label{eq:fe_times_interpolation}
   (\nabla (y-y_h),\nabla (w - I_h w))_{L^2(\Omega)^2}
   \le \sum_{J=-1}^I \|\nabla (y-y_h)\|_{L^2(\Omega_J)}\,\|\nabla (w-I_h w)\|_{L^2(\Omega_J)}.
 \end{equation}
 Moreover, with the local finite-element error estimates from \cite{DGS11} we obtain
 \begin{align}\label{eq:local_fe_error}
   &\|\nabla(y-y_h)\|_{L^2(\Omega_J)}\nonumber\\
   &\quad \le c\, \left(\|\nabla(y-\tilde I_hy)\|_{L^2(\Omega_J')} + d_J^{-1}\,\|y-\tilde I_h y\|_{L^2(\Omega_J')} + d_J^{-1}\,\|y-y_h\|_{L^2(\Omega_J')}\right)
 \end{align}
 for all $J=-1,\ldots,I$. Note that this estimate would not hold for $I_h$ as the boundary
 traces of $y_h$ and the used interpolant must coincide.
  
 Next, we insert \eqref{eq:local_fe_error} into \eqref{eq:fe_times_interpolation} and discuss the resulting terms separately.
 First, consider the product of the interpolation terms.
 For the interpolation error of the dual solution we apply a standard estimate and Lemma \ref{lem:interior_H2} in case of $J=-1,\ldots,I-2$.
 As we can locally trade $\sigma$ by $d_J$, see \eqref{eq:trade_sigma_dJ}, we obtain
 \begin{align}\label{eq:int_error_w}
   \|\nabla(w-I_h w)\|_{L^2(\Omega_J)}
   &\le c\, h\,\|\nabla^2 w\|_{L^2(\Omega_J')}\nonumber\\
   &\le c\, h\,d_J^{-1}\,\left(\|\nabla w\|_{L^2(\Omega_J'')}
     + \|\sigma^{-1}\,\psi\|_{L^2(\Omega_J'')}\right).
 \end{align}
 In case of $J=I-1,I$ we use a global a priori estimate to arrive at
 \begin{equation}\label{eq:w_Ihw_convex}
   \|\nabla(w-I_h w)\|_{L^2(\Omega_J)} \le c\, h\,\|\nabla^2 w\|_{L^2(\Omega)}
   \le c\,h\,d_I^{-1}\,\|\sigma^{-1}\,\psi\|_{L^2(\Omega)},
 \end{equation}
 where the last step follows from the property $\sigma(x) \ge d_I$ for $x\in\Omega$.
 Together with the interpolation error estimates from
 Lemma \ref{lem:interpolation_Omega_J} and the discrete Cauchy-Schwarz inequality we obtain 
 \begin{align}\label{eq:product_interpolation}
   &\sum_{J=-1}^I\left(\|\nabla(y-\tilde I_hy)\|_{L^2(\Omega_J')} + d_J^{-1}\,\|y-\tilde I_h y\|_{L^2(\Omega_J')}\right) \|\nabla(w-I_h w)\|_{L^2(\Omega_J)}\nonumber\\
   &\quad\le c\,h^2\sum_{J=-1}^I  \left(\lvert\ln h\rvert^{z/2}\,d_J^{\min\{-1/2,-\overline{\vec\beta}\}}\,|y|_{W^{2,\infty}_{\vec\beta}(\Omega_J'')} + \delta_{J,I}\,d_I^{-1}h^{1/2-\overline{\vec\gamma}}\,|g|_{W^{2,2}_{\vec\gamma}(\Gamma)}\right)\nonumber\\
   &\qquad \times 
     \left(\|\nabla w\|_{L^2(\Omega_J')} + \|\sigma^{-1}\,\psi\|_{L^2(\Omega_J'')} + (\delta_{J,I-1} + \delta_{J,I})\,\|\sigma^{-1}\,\psi\|_{L^2(\Omega)}\right)\nonumber\\
   &\quad \le c\, h^2 \left(\lvert\ln h\rvert^{z/2}\left(\sum_{J=-1}^I d_J^{2\min\{-1/2,-\overline{\vec\beta}\}}\right)^{1/2}\,|y|_{W^{2,\infty}_{\vec\beta}(\Omega)} + h^{-1/2-\overline{\vec\gamma}}\,|g|_{W^{2,2}_{\vec\gamma}(\Gamma)}\right)\nonumber\\
   &\qquad \times \left(\|\nabla w\|_{L^2(\Omega)} + \|\sigma^{-1}\,\psi\|_{L^2(\Omega)}
     \right)\nonumber\\
   &\quad \le c\, \left(h^{1/2+\min\{0,1/2-\overline{\vec\beta}\}}\,\lvert\ln h\rvert^{z/2}\,|y|_{W^{2,\infty}_{\vec\beta}(\Omega)}
     + h^{1/2-\overline{\vec\gamma}}\,|g|_{W^{2,2}_{\vec\gamma}(\Gamma)}\right).
 \end{align}
 The last step follows from the limit value of the geometric series.
 Analogous to \eqref{eq:sum_over_K} this can be calculated by means of
 \begin{equation}\label{eq:geometric_series}
  \sum_{J=0}^{I-1} d_J^t = \sum_{J=0}^{I-1} (2^{-t})^J \le c\,(1+(2^{-t})^I) \le c\,(1+d_I^t),
 \end{equation}
 with $t=2\min\{-1/2,-\overline{\vec\beta}\}$. Moreover, we exploited the property $d_I\sim h$ 
 and the estimates from Lemma \ref{lem:weighted_h1} taking into account $\|\sigma^{-1}\,\psi\| \le c\,d_I^{-1}\le c\, h^{-1}$. Note that the constant $c_I$ vanishes in $c$ as it is not needed here.
  
 Next, we discuss the product of the pollution term for the primal problem from
 \eqref{eq:local_fe_error} and the interpolation error for the dual problem.
 With similar arguments as in \eqref{eq:product_interpolation} we get
 \begin{align}\label{eq:product_pollution}
  &\sum_{J=-1}^I d_J^{-1}\,\|y-y_h\|_{L^2(\Omega_J')}\,\|\nabla (w-I_h w)\|_{L^2(\Omega_J)} \nonumber\\
   &\quad \le c\, h\,\sum_{J=-1}^I d_J^{-2}\,\|y-y_h\|_{L^2(\Omega_J')}\,\Big(\|\nabla w\|_{L^2(\Omega_J'')}
     + \|\sigma^{-1}\,\psi\|_{L^2(\Omega_J'')} \nonumber\\
   &\hspace{5cm}+ (\delta_{J,I-1} + \delta_{J,I})\,\|\sigma^{-1}\,\psi\|_{L^2(\Omega)}\Big) \nonumber\\
   &\quad \le c\, h\,\|\sigma^{-2}\, (y-y_h)\|_{L^2(\Omega)}\,\left(\|\nabla w\|_{L^2(\Omega)}
     + \|\sigma^{-1}\,\psi\|_{L^2(\Omega)}\right) \nonumber\\
  &\quad \le c\, c_I^{-1}\,\|\sigma^{-2}\,(y-y_h)\|_{L^2(\Omega)},
 \end{align}
 and in the last step we applied Lemma \ref{lem:weighted_h1} and $\|\sigma^{-1}\,\psi\|_{L^2(\Omega)} \le c\, d_I^{-1} = c\, c_I^{-1}\, h^{-1}$.
 Finally, insertion of \eqref{eq:product_interpolation} and \eqref{eq:product_pollution} into \eqref{eq:fe_times_interpolation}
 and the resulting estimate together with \eqref{eq:boundary_approximation_term} into \eqref{eq:nitsche_trick} leads to
 \begin{align}\label{eq:weighted_error_final}
  \|\sigma^{-2}\,(y-y_h)\|_{L^2(\Omega)} &\le c\, h^{1/2 - \max\{0,\overline{\vec\beta}-1/2\}}\,\lvert\ln h\rvert^{z/2}\,|y|_{W^{2,\infty}_{\vec\beta}(\Omega)} \nonumber\\
  &+ c h^{1/2-\overline{\vec\gamma}}\,|g|_{W^{2,2}_{\vec\gamma}(\Gamma)} + c c_I^{-1}\,\|\sigma^{-2}\,(y-y_h)\|_{L^2(\Omega)}.
 \end{align}
 The last term on the right-hand side can be neglected when $c_I$
 is chosen sufficiently large such that $c c_I^{-1} \le 1/2$. This implies the assertion.
\end{proof}
Now, we are in the position to show an improved convergence rate
for the variational normal derivative in case of convex domains.
\begin{theorem}\label{thm:error_estimate}
  Let $g_h:=Q_h g$. Assume that $\Omega$ is a convex polygonal domain. Let $y\in H^2(\Omega)\cap
  W^{2,\infty}_{\vec\beta}(\Omega)$ 
  and $g\in W^{2,2}_{\vec\gamma}(\Gamma)$ with $\vec
  \beta\in [0,1)^d$, $\vec\gamma\in
[0,3/2)^d$. Moreover, it is assumed that $\partial_n y$ is continuous in the
corners of $\Omega$. Then, there holds the error estimate
 \begin{align*}
   &\|\partial_n y - \partial_n^h y_h\|_{H^{-1/2}(\Gamma)} \\
   &\quad\le c\, h^{3/2}\,\lvert\ln h\rvert^{z/2}\,\left(h^{-\max\{0,\overline{\vec\beta}-1/2\}}\,|y|_{W^{2,\infty}_{\vec\beta}(\Omega)}
  + h^{-\overline{\vec\gamma}}\,|g|_{W^{2,2}_{\vec\gamma}(\Gamma)}\right)
 \end{align*}
 with $z=1$ if $\overline{\vec\beta}=1/2$ and $z=0$ otherwise.
\end{theorem}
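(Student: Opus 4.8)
The plan is to mirror the splitting used in the proof of Theorem~\ref{thm:main_result_nonconvex}, namely to write
\[
  \|\partial_n y-\partial_n^h y_h\|_{H^{-1/2}(\Gamma)}
  \le \|\partial_n y-Q_h(\partial_n y)\|_{H^{-1/2}(\Gamma)}
  + \|Q_h(\partial_n y)-\partial_n^h y_h\|_{H^{-1/2}(\Gamma)},
\]
but now to gain the additional half power over the worst-case rate by systematically exploiting the boundary concentration of the involved quantities together with the weighted $L^2(\Omega)$-estimate from Lemma~\ref{lem:weighted_l2_estimate}. The first (best-approximation) term will be controlled by the regularity of $y$, whereas the second (fully discrete) term carries both the $y$- and the $g$-contribution.

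For the first term I would use the duality estimate~\eqref{eq:int_error_duality}, this is $\|\partial_n y-Q_h(\partial_n y)\|_{H^{-1/2}(\Gamma)}\le c\,h^{1/2}\,\|\partial_n y-Q_h(\partial_n y)\|_{L^2(\Gamma)}$, and then prove a weighted best-approximation estimate for $\partial_n y$ in $L^2(\Gamma)$. Since $y\in W^{2,\infty}_{\vec\beta}(\Omega)$ the tangential derivative of $\partial_n y$ is bounded by $c\,r_j^{-\beta_j}\,|y|_{W^{2,\infty}_{\vec\beta}(\Omega)}$ near $\boldsymbol c_j$, and $\partial_n y$ is continuous at the corners by assumption. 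Splitting $\Gamma$ into the dyadic ring elements towards each corner and treating the corner element separately, where $\partial_n y-\partial_n y(\boldsymbol c_j)$ is of order $r_j^{1-\beta_j}$, one obtains $\|\partial_n y-Q_h(\partial_n y)\|_{L^2(\Gamma)}\le c\,h^{1-\max\{0,\overline{\vec\beta}-1/2\}}\,\lvert\ln h\rvert^{z/2}\,|y|_{W^{2,\infty}_{\vec\beta}(\Omega)}$, so that the first term is bounded by $c\,h^{3/2-\max\{0,\overline{\vec\beta}-1/2\}}\,\lvert\ln h\rvert^{z/2}\,|y|_{W^{2,\infty}_{\vec\beta}(\Omega)}$.

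For the second term I would proceed as in~\eqref{eq:discrete_part} to obtain, with the discrete harmonic extension $B_h$,
\[
  \|Q_h(\partial_n y)-\partial_n^h y_h\|_{H^{-1/2}(\Gamma)}
  \le c\sup_{\genfrac{}{}{0pt}{}{\varphi_h\in V_h^\partial}{\varphi_h\not\equiv 0}}
  \frac{(\nabla(y-y_h),\nabla B_h\varphi_h)_{L^2(\Omega)^2}}{\|\varphi_h\|_{H^{1/2}(\Gamma)}}.
\]
The crucial new idea is to compare $B_h\varphi_h$ with the \emph{continuous} harmonic extension $\Phi$ of $\varphi_h$ and to split $(\nabla(y-y_h),\nabla B_h\varphi_h)=(\nabla(y-y_h),\nabla\Phi)+(\nabla(y-y_h),\nabla(B_h\varphi_h-\Phi))$. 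Since $\Phi$ is harmonic, integration by parts turns the first contribution into the boundary pairing $\langle g-Q_h g,\partial_n\Phi\rangle$, which is bounded by $\|g-Q_h g\|_{H^{1/2}(\Gamma)}\,\|\partial_n\Phi\|_{H^{-1/2}(\Gamma)}\le c\,h^{3/2-\overline{\vec\gamma}}\,|g|_{W^{2,2}_{\vec\gamma}(\Gamma)}\,\|\varphi_h\|_{H^{1/2}(\Gamma)}$, using the boundedness of the Steklov--Poincar\'e operator and Lemma~\ref{lem:int_error_g} together with the $H^{1/2}(\Gamma)$-stability of $Q_h$; this produces the full $g$-contribution. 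For the second contribution I would use the dyadic decomposition~\eqref{eq:dyadic}, the local finite element error estimate~\eqref{eq:local_fe_error} for both factors, the interior regularity of the harmonic function $\Phi$ from Lemma~\ref{lem:interior_H2}, which yields $\|\nabla^2\Phi\|_{L^2(\Omega_J')}\le c\,d_J^{-1}\|\varphi_h\|_{H^{1/2}(\Gamma)}$ and hence the boundary-layer decay $\|\nabla(\Phi-B_h\varphi_h)\|_{L^2(\Omega_J)}\le c\,h\,d_J^{-1}\|\varphi_h\|_{H^{1/2}(\Gamma)}$, and for $\|\nabla(y-y_h)\|_{L^2(\Omega_J)}$ the combination of Lemma~\ref{lem:interpolation_Omega_J} with the weighted estimate of Lemma~\ref{lem:weighted_l2_estimate}, trading locally $\sigma$ by $d_J$ as in~\eqref{eq:trade_sigma_dJ}. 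Summing the resulting geometric series in $J$ as in~\eqref{eq:geometric_series} would again yield the bound $c\,h^{3/2-\max\{0,\overline{\vec\beta}-1/2\}}\,\lvert\ln h\rvert^{z/2}(|y|_{W^{2,\infty}_{\vec\beta}(\Omega)}+h^{1/2-\overline{\vec\gamma}}\,|g|_{W^{2,2}_{\vec\gamma}(\Gamma)})$.

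The main obstacle I anticipate is precisely the second contribution of the fully discrete term: one has to justify the boundary-layer decay $\|\nabla(\Phi-B_h\varphi_h)\|_{L^2(\Omega_J)}\le c\,h\,d_J^{-1}\|\varphi_h\|_{H^{1/2}(\Gamma)}$ for the discrete harmonic extension error, which requires controlling the pollution term in~\eqref{eq:local_fe_error} through a weighted $L^2(\Omega)$-estimate for $\Phi-B_h\varphi_h$. The latter is provable by the same Nitsche-type argument as in Lemma~\ref{lem:weighted_l2_estimate}, now with vanishing right-hand side and boundary datum $\varphi_h$, where the convexity of $\Omega$ again enters through the $H^2(\Omega)$-regularity of the dual problem. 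The remaining difficulty is the careful bookkeeping of the logarithmic factor arising from the sum over $J$, which is sharp only in the borderline case $\overline{\vec\beta}=1/2$ and accounts for the exponent $z$ in the statement.
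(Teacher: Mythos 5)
Your overall architecture coincides with the paper's: the same splitting into a best-approximation and a fully discrete part, essentially the same treatment of the best-approximation term (your dyadic boundary argument is a rephrasing of the paper's corner-modified Cl\'ement interpolant and yields the same $h^{3/2-\max\{0,\overline{\vec\beta}-1/2\}}\lvert\ln h\rvert^{z/2}$ bound, using the assumed continuity of $\partial_n y$ at the corners), and the same integration-by-parts trick for the $g$-contribution. The genuine difference lies in the extension operator in \eqref{eq:discrete_part}: you take $\mathcal E_h=B_h$ and compare with the continuous harmonic extension $\Phi=B\varphi_h$, so your dual-side factor $\Phi-B_h\varphi_h$ is a \emph{Galerkin} error; the paper takes $\mathcal E_h:=I_hB$, so its dual-side factor $B\varphi_h-I_hB\varphi_h$ is a pure \emph{interpolation} error, which admits purely local bounds via Lemma~\ref{lem:interior_H2} and therefore never produces a pollution term on the dual side. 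This is exactly where your version develops two problems.

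First, the decay $\|\nabla(\Phi-B_h\varphi_h)\|_{L^2(\Omega_J)}\le c\,h\,d_J^{-1}\,\|\varphi_h\|_{H^{1/2}(\Gamma)}$ cannot be justified ``by the same Nitsche-type argument as in Lemma~\ref{lem:weighted_l2_estimate}'': that proof requires the primal solution to lie in $W^{2,\infty}_{\vec\beta}(\Omega)$ (through Lemma~\ref{lem:interpolation_Omega_J}), and $\Phi$ has no such regularity --- $\varphi_h$ is merely piecewise linear, so $\Phi\in H^{3/2-\varepsilon}(\Omega)$ at best. What does work, and is all you need for the pollution term $d_J^{-1}\|\Phi-B_h\varphi_h\|_{L^2(\Omega_J')}$, is the plain Aubin--Nitsche estimate $\|\Phi-B_h\varphi_h\|_{L^2(\Omega)}\le c\,h\,\|\nabla(\Phi-B_h\varphi_h)\|_{L^2(\Omega)}\le c\,h\,\|\varphi_h\|_{H^{1/2}(\Gamma)}$, valid because the traces of $\Phi$ and $B_h\varphi_h$ coincide (no boundary term in the duality) and $\Omega$ is convex. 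Second, and more serious, the summation as you describe it loses a logarithm: pairing the pollution part of $\|\nabla(y-y_h)\|_{L^2(\Omega_J)}$, namely $c\,d_J\,\|\sigma^{-2}(y-y_h)\|_{L^2(\Omega_J')}$, with your uniform bound $c\,h\,d_J^{-1}\,\|\varphi_h\|_{H^{1/2}(\Gamma)}$ gives $c\,h\,\|\varphi_h\|_{H^{1/2}(\Gamma)}\sum_J\|\sigma^{-2}(y-y_h)\|_{L^2(\Omega_J')}$, and since there are $\sim\lvert\ln h\rvert$ summands this costs an extra factor $\lvert\ln h\rvert^{1/2}$, i.e.\ you end up with $\lvert\ln h\rvert^{(z+1)/2}$ instead of $\lvert\ln h\rvert^{z/2}$. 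The cure is to keep both factors local before summing: write the dual-side pollution as $d_J^{-1}\|\Phi-B_h\varphi_h\|_{L^2(\Omega_J')}$, so that in the product the full weight $d_J^{-2}$ sits on the $y-y_h$ factor (where it becomes $\sigma^{-2}$ by \eqref{eq:trade_sigma_dJ}), apply the discrete Cauchy--Schwarz inequality over $J$ exploiting the finite overlap of the patches, and only then invoke $\|\Phi-B_h\varphi_h\|_{L^2(\Omega)}\le c\,h\,\|\varphi_h\|_{H^{1/2}(\Gamma)}$ and Lemma~\ref{lem:weighted_l2_estimate}. With these two repairs your route delivers the stated estimate; the paper's choice $\mathcal E_h=I_hB$ simply sidesteps both issues, since a pure interpolation error has no pollution term to begin with.
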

\begin{proof}
  The beginning of the proof is analogous to the proof of Theorem \ref{thm:main_result_nonconvex}.
  First, we derive an interpolation error estimates for some interpolant of
  $\partial_n y$ in the $L^2(\Gamma)$-norm.
  Therefore, we use the a Cl\'ement-type interpolant $C_h\colon C(\Gamma)\to V_h^\partial$
  with a slight modification in the nodes located in a corner of $\Omega$.
  In the following $\{x_i\}_{i=1}^{N_{\text{bd}}}$ are the nodes of $\mathcal E_h$, and 
  $\{\varphi_i\}_{i=1}^{N_{\text{bd}}}$ are the nodal basis functions of
  $\mathcal E_h$. Each basis function is the boundary trace of a nodal basis
  function of $V_h$ (the 2D ``hat functions'').
  The precise definition of $C_h$ is given by
\begin{equation*}
  [C_h v](x) = \sum_{i=1}^{N_{\text{bd}}} a_i(v)\,\varphi_i(x),\qquad
  a_i(y):=\begin{cases}
    v(x_i),& \mbox{if}\ x_i = \boldsymbol c_j\ \mbox{for some}\ j\in\mathcal C,\\
    |\sigma_i|^{-1}\,\int_{\sigma_i} v, &\mbox{otherwise},
  \end{cases}
\end{equation*}
where $\sigma_i :=\cup\{E\in\mathcal E_h\colon x_i\in \overline E\}$ if $x_i\not\in\{\boldsymbol c_j,\ j\in\mathcal C\}$.
For the nodes $x_i$ located in the vertices of $\Omega$ we simply set $\sigma_i:=\emptyset$.
For some $E\in\mathcal E_h$ we denote by $T$ the corresponding
triangle from $\mathcal T_h$, this is, $E\subset \bar T$,
and by $F_T\colon \hat T\to T$ the affine mappings from the reference triangle
$\hat T:=\text{conv}\{(0,0),\, (1,0),\,(0,1)\}$ to the world element $T$.
Moreover, we will use the notation $\hat v(\hat x) := v(F_T(\hat x))$.
In addition, we introduce the patches
$S_E:=\cup\{\sigma_i\colon x_i\in\bar E\}$ and  $D_E:=\cup\{T\in\mathcal
T_h\colon \bar T\cap \bar E\ne\emptyset\}$,
as well as  the corresponding reference patches $S_{\hat E} := F_T^{-1}(S_E)$ and $D_{\hat E} := F_T^{-1}(D_E)$.

First, we easily see that the interpolant satisfies the stability estimate
\begin{equation*}
  \|C_h (v)\|_{L^2(E)}
  \le \sum_{i\colon x_i\in\bar E} a_i(v)\,\|\varphi_i\|_{L^2(E)}
  \le c\, |E|^{1/2}\,\|v\|_{L^\infty(S_E)} 
\end{equation*}
for an arbitrary function $v\in L^\infty(E)$.
For elements $E\in\mathcal E_h$ touching the corner $\boldsymbol c_j$,
$j\in\mathcal C$, we insert an arbitrary first-order polynomial $p$ and infer with 
the triangle inequality and the stability estimate for $C_h$ 
\begin{align*}
  \|\partial_n y- C_h (\partial_n y)\|_{L^2(E)}
  &\le c\, \left(\|\partial_n y - \partial_n p \|_{L^2(E)} + \|C_h (\partial_n y - \partial_n p)\|_{L^2(E)}\right) \\
  &\le c\, h^{-1}\,|E|^{1/2}\,\|\partial_{\hat n}\hat y - \partial_{\hat n} \hat p\|_{L^\infty(S_{\hat E})} \\
  &\le c\, h^{-1}\,|E|^{1/2}\,\|\hat y - \hat p\|_{W^{1,\infty}(D_{\hat E})}.
\end{align*}
We proceed with the embedding $W^{2,2+\varepsilon}(D_{\hat E}) \hookrightarrow W^{1,\infty}(D_{\hat E})$,
the Bramble-Hilbert Lemma, as well as the embedding 
$W^{0,\infty}_{\beta_j}(D_{\hat E}) \hookrightarrow L^{2+\varepsilon}(D_{\hat E})$, which holds for all $\beta_j < 1$, provided that $\varepsilon>0$ is sufficiently small.
The weighted Sobolev spaces in the reference setting are defined analogous to the
spaces defined in Section \ref{sec:weighted_spaces} with the exception that
the weight function is defined by $\hat r:=|\hat x|$.
When assuming w.l.o.g\ that $F_T(0) = \boldsymbol c_j$
we obtain the property $\hat r(\hat x) \sim r_j(F_T(\hat x))\, h^{-1}$.
A transformation of variables then yields
\begin{equation}\label{eq:int_error_corner_convex}
  \|\partial_n y- C_h (\partial_n y)\|_{L^2(E)}
  \le c\, h^{-1}\,|E|^{1/2}\,|\hat y|_{W^{2,\infty}_{\beta_j}(D_{\hat E})}
  \le c\, h^{3/2-\beta_j}\,|y|_{W^{2,\infty}_{\beta_j}(D_E)}.
\end{equation}
For elements $E\in\mathcal E_h$ away from the corners we apply similar arguments,
but use instead the stability estimate $\|C_h v\|_{L^2(E)} \le c\,
\|v\|_{L^2(S_E)}$, to arrive at
\begin{align*}
  \|\partial_n y- C_h (\partial_n y)\|_{L^2(E)} &\le c\,  h^{-1}\,|E|^{1/2}\,\|\partial_{\hat n} \hat y - \partial_{\hat n} \hat p\|_{L^2(S_{\hat E})} \\
&\le c\, h^{-1}\,|E|^{1/2}\,\|\hat y -\hat p\|_{H^2(D_{\hat E})} \le c\, h^{-1}\,|E|^{1/2}\,|\hat y|_{H^2(D_{\hat E})} \\
&\le c\, h^{1/2}\,|y|_{H^2(D_E)}.
\end{align*}
Summation over all boundary elements $E\in\mathcal E_h$ with $E\not\subset
S_h:=\cup\{E\in\mathcal E_h\colon r_{j,E} = 0,\ j\in\mathcal C\}$ yields
\begin{align*}
  \|\partial_n y - C_h(\partial_n y)\|_{L^2(\Gamma\setminus \overline S_h)} 
&\le c\, h^{1/2}\,|y|_{H^2(\Omega_I\setminus S_h)} \\
&\le c\, h^{1 - \max\{0,\overline{\vec\beta}-1/2\}}\,\lvert\ln h\rvert^{z/2}\,|y|_{W^{2,\infty}_{\vec\beta}(\Omega)},
\end{align*}
where the last step is an application of the estimate \eqref{eq:hoelder_Omega_J}
with $J=I$ taking into account $d_I\sim h$.
Together with the estimates \eqref{eq:int_error_duality} and \eqref{eq:int_error_corner_convex}
we deduce
\begin{equation*}
  \|\partial_n y - Q_h(\partial_n y)\|_{H^{-1/2}(\Gamma)}
  \le c\, h^{3/2-\max\{0,\overline{\vec\beta}-1/2\}}\,\lvert\ln h\rvert^{z/2}\,|y|_{W^{2,\infty}_{\vec\beta}(\Omega)}.
\end{equation*}

The fully discrete part $Q_h(\partial_ny) - \partial_n^h y_h$
is treated with \eqref{eq:discrete_part}. However, now, 
the extension operator we are going to use is the Lagrange interpolant
of the harmonic extension, this is, $\mathcal E_h := I_h B$.
Introducing $B\varphi_h$ as intermediate function yields
\begin{align}\label{eq:h-12_norm_splitting}
  &(\nabla (y-y_h),\nabla \mathcal E_h \varphi_h)_{L^2(\Omega)^2}\nonumber\\
  &\qquad = (\nabla (y-y_h),\nabla (I_h B \varphi_h - B\varphi_h))_{L^2(\Omega)^2} + (\nabla (y-y_h),\nabla B \varphi_h)_{L^2(\Omega)^2}.
\end{align}
The latter term is the simpler one.
By partial integration, the trace theorem for normal derivatives from
 \cite[Theorem 1.3.2]{Mik12} and the interpolation error estimate
 from Lemma \ref{lem:int_error_g} we obtain 
 \begin{align}\label{eq:main_term_1}
   (\nabla (y-y_h),\nabla B \varphi_h)_{L^2(\Omega)^2}
   &= (y-y_h,\partial_n B\varphi_h)_{L^2(\Gamma)} \nonumber\\
   & \le c\, \|g-Q_h g\|_{H^{1/2}(\Gamma)}\,\|B\varphi_h\|_{H^1(\Omega)} \nonumber\\
   & \le c\, h^{3/2-\overline{\vec\gamma}}\,|g|_{W^{2,2}_{\vec\gamma}(\Gamma)}\,\|\varphi_h\|_{H^{1/2}(\Gamma)}.
 \end{align}
 
 The first term on the right-hand side of \eqref{eq:h-12_norm_splitting}
 has the structure of the term
 \eqref{eq:fe_times_interpolation} from the proof of Lemma \ref{lem:weighted_l2_estimate}. 
 The only difference is that the dual solution $w$ used in this lemma, has to be
 replaced by the function $B\varphi_h$. This means that $w$ fulfills a 
 homogeneous equation and but inhomogeneous Dirichlet boundary conditions in the present case.
 For that reason, we replace $\psi$ by $0$ and it remains 
 to bound each occurrence of $w$ by the $H^{1/2}(\Gamma)$-norm of $\varphi_h$.

 To be more precise, we confirm that the estimate \eqref{eq:int_error_w} remains valid when neglecting
 the term depending on $\psi$. 
 Moreover, we have to establish an analogue to the estimate
 \eqref{eq:w_Ihw_convex}.
 For the present definition of $w:=B \varphi_h$ we obtain 
 with a standard interpolation and a priori estimate and an inverse inequality
 \begin{align*}
   \|\nabla(w - I_h w)\|_{H^1(\Omega)}
   &\le c\, h^{1/2-\varepsilon}\,\|w\|_{H^{3/2-\varepsilon}(\Omega)}\\
   &\le c\, h^{1/2-\varepsilon}\,\|\varphi_h\|_{H^{1-\varepsilon}(\Gamma)}
   \le c\, \|\varphi_h\|_{H^{1/2}(\Gamma)},
 \end{align*}
 provided that $\varepsilon\in(0,1/2)$.
 With this modification we can repeat the arguments used to show \eqref{eq:weighted_error_final}.
 Moreover, we have to modify the last steps in \eqref{eq:product_interpolation}
 and \eqref{eq:product_pollution}. There, we insert the a priori estimate
 $\|\nabla w\|_{L^2(\Omega)} \le c\,\|\varphi_h\|_{H^{1/2}(\Gamma)}$.
 In both estimates the exponent of $h$ is then greater by one.
 
 All together, this implies
 \begin{align*}
   &(\nabla (y-y_h),\nabla (I_h B \varphi_h -
   B\varphi_h))_{L^2(\Omega)^2} \\
   &\quad \le c\, h^{3/2}\Big(h^{\min\{0,1/2-\overline{\vec\beta}\}}\,\lvert\ln h\rvert^{z/2}\,|y|_{W^{2,\infty}_{\vec\beta}(\Omega)} 
     + h^{-\overline{\vec\gamma}}\,|g|_{W^{2,2}_{\vec\gamma}(\Gamma)}\\
   &\hspace{1.4cm}+ h^{-1/2}\,\|\sigma^{-2}\,(y-y_h)\|_{L^2(\Omega)}\Big)\,\|\varphi_h\|_{H^{1/2}(\Gamma)}.
 \end{align*}
 The last term in the parentheses on the right-hand side is discussed in Lemma
 \ref{lem:weighted_l2_estimate} already and we can bound this term by the
 first two ones.
 
 Insertion of the previous estimate, \eqref{eq:main_term_1} and \eqref{eq:h-12_norm_splitting}
 into \eqref{eq:discrete_part} and canceling out the terms $\|\varphi_h\|_{H^{1/2}(\Gamma)}$
 leads to the desired estimate for the term
 $\|Q_h(\partial_n y) - \partial_n^h y_h\|_{H^{-1/2}(\Gamma)}$.
\end{proof}

\begin{remark}
  The best possible convergence rate of $3/2$ is achieved when $g\in H^2(\Gamma)$
  and $y\in W^{2,\infty}_{\vec\beta}(\Omega)$ with $\beta_j < 1/2$ for all $j\in\mathcal C$.
  In general, the latter assumption is only satisfied when the opening angles of the corners
  of $\Omega$ satisfy $\omega_j < 2\pi/3$, $j\in\mathcal C$,
  and when $f$ is sufficiently smooth. As an example, assuming $f$ to be H\"older continuous
  would be sufficient, compare Corollary \ref{cor:reg_W2inf}.
  Otherwise, for angles larger than $2\pi/3$ we find a relation between
  the convergence rate and the exponent of the dominating singularity
  $\bar \lambda = \pi/\omega_{\max}$ by choosing 
  $\bar\beta = 2-\bar\lambda + \varepsilon$ if $\omega_{\max}\in (2\pi/3,\pi)$ and
  $\bar\alpha = 1-\bar\lambda+\varepsilon$ if $\omega_{\max}\in (\pi,2\pi)$ for arbitrary but
  sufficiently small $\varepsilon > 0$.
  Under the assumption that $f$ and $g$ are sufficiently smooth we then infer
  \begin{equation*}
    \|\partial_n y - \partial_n^h y_h\|_{H^{-1/2}(\Gamma)} \le c\, h^{\min\{3/2,\bar\lambda-\varepsilon\}}\,\lvert\ln h\rvert^{z/2}.
  \end{equation*}
\end{remark}

\section{Dirichlet control problems}\label{sec:control}

This section is devoted to the numerical approximation of the optimal control problem
\begin{equation}\label{eq:target}
 J(u,z):=\frac12\,\|u-u_d\|_{L^2(\Omega)}^2 + \frac\nu2\left<Nz,z\right> \to\min!
\end{equation}
subject to the constraints
\begin{equation}\label{eq:state_eq}
\left\lbrace
 \begin{aligned}
  -\Delta u &= f &\qquad & \mbox{in}\ \Omega\\
  u&= z && \mbox{on}\ \Gamma.
 \end{aligned}
 \right.
\end{equation}
Here, $f, u_d\in L^2(\Omega)$ are given functions and $\nu>0$ is a
regularization parameter. The operator $N\colon H^{1/2}(\Gamma)\to
H^{-1/2}(\Gamma)$ is a Steklov-Poincar\'e operator which is used to realize
an $H^{1/2}(\Gamma)$-seminorm. 

We introduce the linear operators $S\colon H^{1/2}(\Gamma)\to H^1(\Omega)$ and
$P\colon H^{1}(\Omega)^*\to H_0^1(\Omega)$ defined by
\begin{align*}
  u_z = Sz\ &:\iff   u_z\ \mbox{solves \eqref{eq:state_eq} for } f\equiv 0,\\
  u_f = Pf\ &:\iff   u_f\ \mbox{solves \eqref{eq:state_eq} for } z\equiv 0.
\end{align*}
We can express the operator $N$ by means of 
$Nz:=\partial_n (Sz)$.
Note that the regularization term is equivalent to the square of
the $H^{1/2}(\Gamma)$-seminorm of $z$.

Necessary optimality conditions, that are also sufficient due to the convexity of this optimization problem, can be found in \cite{OPS13}.
Therein, it is shown that the pair $(u,z)\in H^1(\Omega)\times H^{1/2}(\Gamma)$ is the unique global minimizer of \eqref{eq:target}--\eqref{eq:state_eq} if and only if
an adjoint state $p\in H^1(\Omega)$ exists such that the optimality system
\begin{equation}\label{eq:optimality_system}
  \left\lbrace
\begin{aligned}
 - \Delta u &= f& -\Delta p &= u-u_d &\quad& \mbox{in}\ \Omega,\\
 u&= z & p&= 0 && \mbox{on}\ \Gamma,\\
  & & \nu\,N z + \partial_n p &= 0 && \mbox{in}\ H^{-1/2}(\Gamma),
\end{aligned}
\right.
\end{equation}
is fulfilled. 
One can reformulate the optimality system using the operators $S$ and $P$
introduced above. Taking also into account the relation 
$S^* u=\partial_n Pu $ leads to a compact form of the optimality system
\begin{equation*}
 u=Sz+Pf,\qquad \nu\,Nz + S^*(u-u_d) = 0.
\end{equation*}
Eliminating $u$ leads to the variational problem
\begin{equation}\label{eq:variational_form}
 \left<T^\nu z,v\right> = \left<g,v\right>\qquad\forall v\in H^{1/2}(\Gamma)
\end{equation}
with
\begin{equation*}
T^\nu:= S^*S + \nu N,\qquad g:=S^*(u_d- u_f),
\end{equation*}
where $u_f:=Pf$. 
The existence of a unique solution $z$ of \eqref{eq:variational_form} follows
from the Lax-Milgram Lemma.
It remains to discuss the regularity of the optimal solution and the
corresponding state and adjoint state.
These results will be needed for sharp discretization error estimates.
\begin{lemma}\label{lem:regularity}
  Assume that $f,u_d\in L^2(\Omega)$.
  Let $\vec\alpha\in [0,1)^d$  
  be a weight vector satisfying
  $1-\lambda_j< \alpha_j$, $j\in\mathcal C$.
  Then, the solution of \eqref{eq:optimality_system} possesses the regularity
  \begin{equation}\label{eq:weighted_h2_reg}
    Sz, Pf\in W^{2,2}_{\vec\alpha}(\Omega),\quad
    p\in V^{2,2}_{\vec\alpha}(\Omega),\quad
    z\in W^{3/2,2}_{\vec\alpha}(\Gamma),\quad
  \end{equation}
  Moreover, if $u_d\in C^{0,\sigma}(\overline\Omega)$ for some $\sigma\in (0,1)$, there holds
  \begin{equation}\label{eq:weighted_w2inf_reg}
    Sz\in W^{2,\infty}_{\vec\beta}(\Omega),\quad p\in V^{2,\infty}_{\vec\beta}(\Omega),\quad z\in W^{2,\infty}_{\vec\beta}(\Gamma)\cap W^{2,2}_{\vec\gamma}(\Gamma),
  \end{equation}
  with $\vec\beta\in [0,2)^d$, $\vec\gamma\in [0,3/2)^d$ satisfying $2-\lambda_j < \beta_j$
  and $3/2-\lambda_j<\gamma_j$ for $j\in\mathcal C$.
\end{lemma}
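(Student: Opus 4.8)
The plan is to bootstrap regularity around the optimality system \eqref{eq:optimality_system}, starting from the a priori regularity $z\in H^{1/2}(\Gamma)$, $u=Sz+Pf\in H^1(\Omega)$, $p\in H^1(\Omega)$ supplied by Lax--Milgram. Since the adjoint equation has a right-hand side that is only as regular as $u$, I would first lift $p$, read off its normal trace $\partial_n p$, and then use the optimality relation $\nu Nz=-\partial_n p$ together with the mapping properties of the Steklov--Poincar\'e operator $N$ to transfer regularity to the control $z$ and, via a Dirichlet shift, to the state $Sz$. Throughout, the weights are chosen just above the thresholds dictated by the leading corner exponent $\lambda_j=\pi/\omega_j$, which is precisely the order of the singular term $r^{\lambda_j}$ carried by the harmonic state.

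For the first part, since $u-u_d\in L^2(\Omega)\subset W^{0,2}_{\vec\alpha}(\Omega)$ and $p$ has homogeneous Dirichlet data, the $V$-variant of Lemma \ref{lem:regularity_W22} (applicable because $\alpha_j>1-\lambda_j$) gives $p\in V^{2,2}_{\vec\alpha}(\Omega)$, and taking the trace of $\nabla p$ yields $\partial_n p\in V^{1/2,2}_{\vec\alpha}(\Gamma)$. The optimality condition transfers this to $Nz\in V^{1/2,2}_{\vec\alpha}(\Gamma)$. Inverting the Steklov--Poincar\'e operator, which raises the order by one, produces $z\in W^{3/2,2}_{\vec\alpha}(\Gamma)$, the passage from $V$ to $W$ accounting for the generally nonzero corner values of $z$; feeding $g=z$ and $f\equiv0$ into the $W$-variant of Lemma \ref{lem:regularity_W22} then gives $Sz\in W^{2,2}_{\vec\alpha}(\Omega)$, while $Pf\in V^{2,2}_{\vec\alpha}(\Omega)\subset W^{2,2}_{\vec\alpha}(\Omega)$ is immediate from the same lemma with $f\in L^2(\Omega)$ and zero boundary data. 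I would also record that the compatibility $\langle\partial_n p,1\rangle=0$ needed to invert $N$ is automatic: testing the optimality condition against the constant and using $\langle Nz,1\rangle=0$ forces $\int_\Omega(u-u_d)=0$.

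Under $u_d\in C^{0,\sigma}(\overline\Omega)$ I would run the same loop in weighted H\"older spaces. The first part already yields $u\in W^{2,2}_{\vec\alpha}(\Omega)$, hence $u$ is H\"older continuous by a weighted Sobolev embedding, so that $u-u_d\in C^{0,\sigma}_{\vec\delta}(\Omega)=\Lambda^{0,\sigma}_{\vec\delta}(\Omega)$ for $\vec\delta>0$ (after possibly reducing $\sigma$). Lemma \ref{lem:reg_hoelder} in $\Lambda$-spaces with zero Dirichlet data gives $p\in\Lambda^{2,\sigma}_{\vec\delta}(\Omega)$, whence $\partial_n p\in\Lambda^{1,\sigma}_{\vec\delta}(\Gamma)$ and $Nz\in\Lambda^{1,\sigma}_{\vec\delta}(\Gamma)$. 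Inverting $N$ once more, now in $C$-spaces since $z$ need not vanish at the corners, gives $z\in C^{2,\sigma}_{\vec\delta}(\Gamma)$, and Lemma \ref{lem:reg_hoelder} in $C$-spaces gives $Sz\in C^{2,\sigma}_{\vec\delta}(\Omega)$. Corollary \ref{cor:reg_W2inf} converts these into $p\in V^{2,\infty}_{\vec\gamma'}(\Omega)$ and $Sz,z\in W^{2,\infty}_{\vec\gamma'}$ with $\gamma'_j=\delta_j-\sigma<2-\lambda_j$; since the asserted weights obey $\beta_j>2-\lambda_j>\gamma'_j$, the monotonicity $W^{2,\infty}_{\vec\gamma'}\hookrightarrow W^{2,\infty}_{\vec\beta}$ and its $V$-analogue deliver the claimed memberships. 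The remaining assertion $z\in W^{2,2}_{\vec\gamma}(\Gamma)$ follows from the embedding $W^{2,\infty}_{\vec\gamma'}(\Gamma)\hookrightarrow W^{2,2}_{\vec\gamma}(\Gamma)$, which holds because $\gamma_j>3/2-\lambda_j>\gamma'_j-1/2$.

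The main obstacle is the mapping property of the Steklov--Poincar\'e operator invoked in both parts: one must justify that its inverse, the Neumann-to-Dirichlet map of the Laplacian, gains exactly one order of regularity on the relevant weighted Sobolev and H\"older trace spaces, with the admissible weights governed by the same exponents $\lambda_j=\pi/\omega_j$ that control the Dirichlet shift theorems above. As only the Dirichlet results are recorded here, this step rests on the Neumann analogues of \cite{KMR01,NP94,MR10}, and the delicate part is matching the weight thresholds $1-\lambda_j$, $3/2-\lambda_j$ and $2-\lambda_j$ consistently across the $L^2$- and $L^\infty$-scales while keeping track of the corner constants that separate the $V/\Lambda$- from the $W/C$-scales.
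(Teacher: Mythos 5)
Your overall bootstrap skeleton is reasonable, and your observation on compatibility (testing the optimality condition with constants, using $\langle Nz,1\rangle=0$, forces $\int_\Omega(u-u_d)=0$) is correct and is indeed the fact that underlies the paper's construction. But your proof has a genuine gap at its pivotal step, and it is exactly the step you flag yourself: you transfer regularity from $\partial_n p$ to $z$ by inverting the Steklov--Poincar\'e operator on weighted trace spaces, i.e.\ you need that the Neumann-to-Dirichlet map gains one order both on $V^{1/2,2}_{\vec\alpha}(\Gamma)$ and on $\Lambda^{1,\sigma}_{\vec\delta}(\Gamma)$, with weight thresholds matching the Dirichlet ones. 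Nothing in the paper provides this: Lemma \ref{lem:regularity_W22}, Lemma \ref{lem:reg_hoelder} and Corollary \ref{cor:reg_W2inf} are Dirichlet shift theorems only, and weighted shift theorems for the Neumann problem with inhomogeneous \emph{boundary} data (which carry the extra subtlety that the Neumann operator pencil contains the eigenvalue $0$, so one must work modulo constants, i.e.\ in the $W$-/$C$-scale) are neither stated nor precisely cited. As written, the central claim of your argument is assumed rather than proved.

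The paper's proof avoids this operator inversion altogether with one device that your proposal is missing: the optimality condition $\nu\,Nz+\partial_n p=0$ says precisely that $\partial_n(\nu\,Sz+p)=0$, so $\nu\,Sz+p$ solves a \emph{homogeneous}-Neumann problem with volume right-hand side. Writing $u_0$ for the solution of $-\Delta u_0=\frac1\nu(u-u_d)$ in $\Omega$, $\partial_n u_0=0$ on $\Gamma$ (well posed since $\int_\Omega u=\int_\Omega u_d$), one obtains the decomposition $Sz=u_0-\frac1\nu\,p$. Regularity then flows from the domain to the boundary rather than the other way around: $z\in H^{1/2}(\Gamma)$ gives $u\in H^1(\Omega)\hookrightarrow L^q(\Omega)$, hence $p,u_0\in W^{2,q}(\Omega)$ by standard, explicitly citable results for boundary value problems with volume data (\cite[Theorem 4.4.3.7]{Gri85}), hence $Sz\in W^{2,q}(\Omega)$ and $z\in W^{2-1/q,q}(\Gamma)$ by the trace theorem; the weighted statements \eqref{eq:weighted_h2_reg} and \eqref{eq:weighted_w2inf_reg} then follow from Lemma \ref{lem:regularity_W22}, and from the H\"older continuity of $u,p$ combined with Corollary \ref{cor:reg_W2inf} --- that is, using only the Dirichlet shift theorems already established in Section \ref{sec:regularity}. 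Your route could in principle be completed by importing the Neumann analogues of the shift theorems from the cited references, but that is substantial additional machinery; the auxiliary Neumann problem with homogeneous boundary data is what allows the paper's proof to close with the tools actually on hand.
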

\begin{proof}
  In order to transfer the regularity of the adjoint state to the state, we introduce the auxiliary function $u_0$ solving the boundary value problem
  \begin{equation*}
    -\Delta u_0 = \frac1\nu\,(u-u_d) \quad \mbox{in}\ \Omega,\qquad
    \partial_n u_0 = 0\quad \mbox{on}\ \Gamma.
  \end{equation*}
  Note that the function $u_0$ can be determined uniquely as the optimal state satisfies  
  $\int_\Omega u = \int_\Omega u_d$, see e.\,g.\ \cite[Section 3.2.3]{John14}.
  With \eqref{eq:optimality_system} it is easy to confirm that the state can be
  decomposed by means of $Sz = u_0 - \frac1\nu\,p$.
  
  The assertion then follows from bootstrapping arguments.       
  Standard regularity results, and in particular \cite[Theorem 4.4.3.7]{Gri85}, immediately imply
  \begin{align*}
    z\in H^{1/2}(\Gamma)\ \Rightarrow\ &u\in H^1(\Omega)\hookrightarrow L^q(\Omega)
    \ \Rightarrow\ p, u_0\in W^{2,q}(\Omega) \\
    \Rightarrow\ &Sz\in W^{2,q}(\Omega)\ \Rightarrow\ z\in W^{2-1/q,q}(\Gamma),
  \end{align*}
  for arbitrary $q\in[1,\infty)$ satisfying $2/q > 2-\lambda_j$ for all $j\in\mathcal C$.
  The regularity results collected in \eqref{eq:weighted_h2_reg}
  then directly follow from Lemma \ref{lem:regularity_W22}.
  
  From embedding theorems we moreover conclude that $u, p\in C^{0,\sigma'}(\overline\Omega)$ for
  some $\sigma' \in (0,\min\{1,\bar\lambda\})$, and 
  with Corollary \ref{cor:reg_W2inf} we directly infer
  \eqref{eq:weighted_w2inf_reg}. The assertion $z\in W^{2,2}_{\vec\gamma}(\Gamma)$ follows from
  the $W^{2,\infty}_{\vec\beta}(\Gamma)$-regularity due to the H\"older inequality.
\end{proof}

In order to discretize the optimality system we 
replace $S$ and $P$ by the finite element solution 
operators $S_h\colon V_h^\partial\to V_h$ and $P_h\colon H^1(\Omega)^*\to V_h^\partial$ defined by
\begin{align*}
 u_h&= S_h z_h&&:\iff& u_h|_\Gamma\equiv z_h\quad (\nabla u_h,\nabla v_h)_{L^2(\Omega)^2} &= 0 &\forall v_h\in V_{0h},\\
 p_h&= P_h u_h&&:\iff& (\nabla p_h,\nabla v_h)_{L^2(\Omega)^2} &= (u_h,v_h)_{L^2(\Omega)} &\forall v_h\in V_{0h}.
\end{align*}
Instead of $S^*$ we use its discrete version 
$S_h^*:=\partial_n^h P_h$ which is the adjoint operator to $S_h$.
Then, we seek a state $u_h\in V_h$ and a control $z_h\in V_h^\partial$  as solution of the finite-dimensional optimization problem
\begin{equation}\label{eq:target_discrete}
 J_h(u_h,z_h):=\frac12\,\|u_h - u_d\|_{L^2(\Omega)}^2 + \frac\nu2\left<N_h z_h,z_h\right> \to \min!
\end{equation}
subject to 
\begin{equation}\label{eq:state_eq_discrete}
 u_h|_\Gamma\equiv z_h,\quad (\nabla u_h,\nabla v_h)_{L^2(\Omega)^2} = (f,v_h)_{L^2(\Omega)}\qquad\forall v_h\in V_{0h}.
\end{equation}
In order to define an appropriate discrete Steklov-Poincar\'e operator 
we use the variational normal derivative introduced in \eqref{eq:discrete_normal_deriv},
and define $N_h\colon V_h^\partial\to V_h^\partial$ by $N_h z_h := \partial_n^h (S_h z_h)$.
Note that by this definition, the functional $\left<N_h\cdot, \cdot\right>$
induces a mesh-independent $H^{1/2}(\Gamma)$-seminorm for functions in $V_h^\partial$.
Analogous to the continuous case we can derive the discrete optimality system
\begin{align}\label{eq:optimality_system_disc}
 u_h|_\Gamma\equiv z_h, \quad (\nabla u_h,\nabla v_h)_{L^2(\Omega)^2} &= (f,v_h)_{L^2(\Omega)} && \forall v_h\in V_{0h},\nonumber\\
 (\nabla p_h,\nabla v_h)_{L^2(\Omega)^2} &= (u_h-u_d,v_h)_{L^2(\Omega)} && \forall v_h\in V_{0h},\\
 (\nu\,\partial_n^h (S_h z_h) + \partial_n^h p_h, w_h)_{L^2(\Gamma)}&=0 && \forall w_h\in V_h^\partial,\nonumber
\end{align}
with an adjoint state $p_h\in V_{0h}$.
This system can be rewritten by means of
\begin{equation}\label{eq:variational_form_discrete}
 \left<T_h^\nu z_h,v_h\right> = \left<g_h,v_h\right>\qquad\forall v_h\in V_h^\partial
\end{equation}
with $T_h^\nu = S_h^*S_h + \nu\, N_h$, $g_h:=S_h^*(u_d - u_{f,h})$ and $u_{f,h}:=P_h f$.

The remainder of this section is devoted to the proof of error estimates for the finite-element approximation $(u_h,z_h,p_h)$.
To this end, we introduce an auxiliary function $\tilde z_h\in V_h^\partial$ solving the variational formulation
\begin{equation}\label{eq:def_tilde_zh}
\left<T^\nu \tilde z_h,v_h\right> = \left<g,v_h\right>\qquad\forall v_h\in V_h^\partial.
\end{equation}
The Lax-Milgram-Lemma guarantees the existence and uniqueness of $\tilde z_h$ and 
by the C\'ea-Lemma and the
interpolation error estimates from Lemma \ref{lem:int_error_g} we obtain the following intermediate result:
\begin{lemma}\label{lem:cea}
  Let $z\in H^{1/2}(\Gamma)$ be the optimal control solving \eqref{eq:target}--\eqref{eq:state_eq}.
  The approximate solutions $\tilde z_h$ of \eqref{eq:def_tilde_zh} satisfy the estimate
 \begin{equation*}
   \|z-\tilde z_h\|_{H^{1/2}(\Gamma)} \le c\,
   \begin{cases}
     h^{1-\overline{\vec\alpha}}\,\|z\|_{W^{3/2,2}_{\vec\alpha}(\Gamma)}, &\mbox{if}\ u_d\in L^2(\Omega),\\
     h^{3/2-\overline{\vec\gamma}}\,|z|_{W^{2,2}_{\vec\gamma}(\Gamma)}, &\mbox{if}\ u_d\in C^{0,\sigma}(\overline\Omega).
   \end{cases}
 \end{equation*}
 The weights $\vec\alpha$ and $\vec\gamma$ are chosen as in Lemma \ref{lem:regularity}.
\end{lemma}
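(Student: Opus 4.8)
The plan is to recognize \eqref{eq:def_tilde_zh} as a conforming Galerkin discretization of the variational problem \eqref{eq:variational_form}: both equations use the \emph{same} bilinear form $\langle T^\nu\cdot,\cdot\rangle$, and the discrete trial/test space $V_h^\partial$ is a subspace of $H^{1/2}(\Gamma)$. Subtracting the two formulations over the discrete test space yields the Galerkin orthogonality $\langle T^\nu(z-\tilde z_h),v_h\rangle = 0$ for all $v_h\in V_h^\partial$. Since $T^\nu = S^*S + \nu N$ is bounded and $H^{1/2}(\Gamma)$-elliptic --- the ingredients that already guaranteed unique solvability via Lax--Milgram after \eqref{eq:variational_form} --- the standard C\'ea argument gives the quasi-optimality
\begin{equation*}
  \|z-\tilde z_h\|_{H^{1/2}(\Gamma)} \le c\,\inf_{v_h\in V_h^\partial}\|z-v_h\|_{H^{1/2}(\Gamma)}.
\end{equation*}

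First I would take $v_h := I_h z$, the nodal interpolant, which is well defined by exactly the regularity used when $I_h$ is applied to the boundary datum in Lemma~\ref{lem:int_error_g}; the regularity of $z$ supplied by Lemma~\ref{lem:regularity} (namely $z\in W^{3/2,2}_{\vec\alpha}(\Gamma)$, respectively $z\in W^{2,2}_{\vec\gamma}(\Gamma)$) is sufficient. This reduces the infimum to the interpolation error $\|z-I_h z\|_{H^{1/2}(\Gamma)}$.

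Then I would invoke Lemma~\ref{lem:int_error_g} in the two regularity regimes. In the case $u_d\in L^2(\Omega)$, Lemma~\ref{lem:regularity} yields $z\in W^{3/2,2}_{\vec\alpha}(\Gamma)$, and the second estimate of Lemma~\ref{lem:int_error_g} gives $h^{1/2}\,\|z-I_h z\|_{H^{1/2}(\Gamma)}\le c\,h^{3/2-\overline{\vec\alpha}}\,\|z\|_{W^{3/2,2}_{\vec\alpha}(\Gamma)}$, i.e.\ the rate $h^{1-\overline{\vec\alpha}}$ after dividing by $h^{1/2}$. In the smoother case $u_d\in C^{0,\sigma}(\overline\Omega)$, Lemma~\ref{lem:regularity} yields $z\in W^{2,2}_{\vec\gamma}(\Gamma)$, and the first estimate of Lemma~\ref{lem:int_error_g} gives $h^{1/2}\,\|z-I_h z\|_{H^{1/2}(\Gamma)}\le c\,h^{2-\overline{\vec\gamma}}\,|z|_{W^{2,2}_{\vec\gamma}(\Gamma)}$, i.e.\ the rate $h^{3/2-\overline{\vec\gamma}}$. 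Combined with C\'ea's inequality, this produces exactly the claimed two-case bound, with the weights $\vec\alpha,\vec\gamma$ taken as in Lemma~\ref{lem:regularity}.

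The argument is essentially routine once the Galerkin structure is identified; the only point needing care is the ellipticity of $T^\nu$ in the full $H^{1/2}(\Gamma)$-norm rather than merely the seminorm supplied by $N$. This is already available: the $L^2(\Omega)$-contribution $\|Sz\|_{L^2(\Omega)}^2$ coming from $S^*S$ controls the constant mode that $|\cdot|_{H^{1/2}(\Gamma)}$ annihilates, so that $\langle T^\nu z,z\rangle \ge c\,\|z\|_{H^{1/2}(\Gamma)}^2$, which is precisely the coercivity underlying the Lax--Milgram step cited above. No inverse or mesh-dependent estimates enter, so the constant $c$ is independent of $h$.
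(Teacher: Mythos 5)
Your proposal is correct and follows exactly the route the paper takes: the paper proves this lemma by observing that \eqref{eq:def_tilde_zh} is a conforming Galerkin discretization of \eqref{eq:variational_form}, invoking the C\'ea-Lemma (with the Lax--Milgram ellipticity of $T^\nu$), and then applying the two interpolation error estimates of Lemma~\ref{lem:int_error_g} to the regularity of $z$ supplied by Lemma~\ref{lem:regularity}. Your write-up merely spells out the details (choice of $I_h z$, the division by $h^{1/2}$, the coercivity of $S^*S+\nu N$ in the full $H^{1/2}(\Gamma)$-norm) that the paper leaves implicit.
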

It remains to derive an estimate for the error between the continuous and the discrete control
$z$ and $z_h$, respectively. In the following Lemma we present a general
estimate. The idea of the proof is taken from \cite{OPS13}.
\begin{lemma}
  The solutions $z$ and $z_h$ of \eqref{eq:variational_form} 
  and \eqref{eq:variational_form_discrete}, respectively, satisfy the general error estimate
 \begin{align*}
  \|z-z_h\|_{H^{1/2}(\Gamma)}
  &\le c\,\big(\|z-\tilde z_h\|_{H^{1/2}(\Gamma)} + \|\partial_n p - \partial_n^h p_h(u)\|_{H^{-1/2}(\Gamma)} \\
  &\quad+ \|u - u_h(Q_h z)\|_{L^{2}(\Omega)} + \|\partial_n(Sz) - \partial_n^h(S_hQ_h z)\|_{H^{-1/2}(\Gamma)}\big),
 \end{align*}
 with $u_h(Q_h z) = S_h(Q_h z) + u_{f,h}$ and $p_h(u) = P_h(u-u_d)$.
\end{lemma}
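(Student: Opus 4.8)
The plan is to proceed by a Strang-type perturbation argument around the intermediate solution $\tilde z_h$. First I would split by the triangle inequality
\[
 \|z-z_h\|_{H^{1/2}(\Gamma)} \le \|z-\tilde z_h\|_{H^{1/2}(\Gamma)} + \|\tilde z_h - z_h\|_{H^{1/2}(\Gamma)},
\]
so that the first summand is already the desired best-approximation contribution and it remains to estimate the fully discrete error $e_h:=\tilde z_h - z_h\in V_h^\partial$. Since $T_h^\nu$ is symmetric and uniformly coercive on $V_h^\partial$ — the discrete analogue of the Lax--Milgram coercivity of $T^\nu$, using that $N_h$ induces a mesh-independent $H^{1/2}(\Gamma)$-seminorm together with the positivity of $S_h^*S_h$ — i.e. $\langle T_h^\nu v_h,v_h\rangle\ge c\,\|v_h\|_{H^{1/2}(\Gamma)}^2$, I would test with $e_h$ itself. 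Using the defining relations $\langle T_h^\nu z_h,e_h\rangle=\langle g_h,e_h\rangle$ and $\langle T^\nu \tilde z_h,e_h\rangle=\langle g,e_h\rangle$ (both legitimate because $e_h\in V_h^\partial$) yields the consistency identity
\[
 \langle T_h^\nu e_h,e_h\rangle = \langle (T_h^\nu - T^\nu)\tilde z_h,e_h\rangle + \langle g - g_h,e_h\rangle,
\]
which isolates exactly the variational crime committed by replacing $(T^\nu,g)$ with $(T_h^\nu,g_h)$.

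The decisive algebraic step is to regroup the right-hand side into the state/adjoint structure. Inserting $T^\nu=S^*S+\nu N$, $T_h^\nu=S_h^*S_h+\nu N_h$, $g=S^*(u_d-u_f)$ and $g_h=S_h^*(u_d-u_{f,h})$ and collecting the $S^*$- and $S_h^*$-contributions separately, the affine data $u_f,u_{f,h},u_d$ combine so that
\[
 (T_h^\nu - T^\nu)\tilde z_h + (g - g_h) = \big[S_h^*(u_h(\tilde z_h)-u_d) - S^*(u(\tilde z_h)-u_d)\big] + \nu\,\big[N_h\tilde z_h - N\tilde z_h\big],
\]
where $u(\tilde z_h)=S\tilde z_h+u_f$ and $u_h(\tilde z_h)=S_h\tilde z_h+u_{f,h}$. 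With $S^*=\partial_n P$, $S_h^*=\partial_n^h P_h$, $N=\partial_n S$ and $N_h=\partial_n^h S_h$ this equals $[\partial_n^h p_h(\tilde z_h) - \partial_n p(\tilde z_h)] + \nu[\partial_n^h(S_h\tilde z_h) - \partial_n(S\tilde z_h)]$, with the adjoints $p(\tilde z_h)=P(u(\tilde z_h)-u_d)$ and $p_h(\tilde z_h)=P_h(u_h(\tilde z_h)-u_d)$.

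It then remains to pass from the intermediate control $\tilde z_h$ to the exact control $z$, respectively its projection $Q_h z$, thereby producing precisely the four stated quantities. For the Steklov--Poincar\'e part I would write $\partial_n^h(S_h\tilde z_h) - \partial_n(S\tilde z_h) = [\partial_n^h(S_h Q_h z) - \partial_n(Sz)] + \partial_n^h S_h(\tilde z_h - Q_h z) - \partial_n S(\tilde z_h - z)$, where the bracket is the fourth term and the two remainders are controlled in $H^{-1/2}(\Gamma)$ by $\|\tilde z_h - Q_h z\|_{H^{1/2}(\Gamma)}$ and $\|\tilde z_h - z\|_{H^{1/2}(\Gamma)}$ through the $H^{-1/2}(\Gamma)$-stability of the (discrete) harmonic extension and its normal derivative, cf.\ Lemma \ref{lem:stability_normal}. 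For the adjoint part, observing that $p(z)=p$ is the true adjoint and $p_h(u)=P_h(u-u_d)$, I would decompose $\partial_n^h p_h(\tilde z_h) - \partial_n p(\tilde z_h) = [\partial_n^h p_h(u) - \partial_n p] + \partial_n^h P_h(u_h(\tilde z_h)-u) - \partial_n PS(\tilde z_h - z)$ and further split $u_h(\tilde z_h)-u = S_h(\tilde z_h - Q_h z) + (u_h(Q_h z)-u)$. This reproduces the second and third terms, while all remainders are of the form $\partial_n^h P_h(\cdot)$ or $\partial_n P(\cdot)$ applied to harmonic-extension differences and are bounded, again by Lemma \ref{lem:stability_normal} in the discrete case and by the trace theorem for normal derivatives in the continuous case, by $c\,\|\tilde z_h - z\|_{H^{1/2}(\Gamma)}$.

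Finally, every remainder is dominated by $\|z-\tilde z_h\|_{H^{1/2}(\Gamma)}$: this is immediate for the continuous differences, and for $\|\tilde z_h - Q_h z\|_{H^{1/2}(\Gamma)}\le \|\tilde z_h - z\|_{H^{1/2}(\Gamma)} + \|z - Q_h z\|_{H^{1/2}(\Gamma)}$ one invokes the $H^{1/2}(\Gamma)$-stability, hence quasi-optimality, of the $L^2(\Gamma)$-projection $Q_h$ on the quasi-uniform mesh, giving $\|z-Q_h z\|_{H^{1/2}(\Gamma)}\le c\,\|z-\tilde z_h\|_{H^{1/2}(\Gamma)}$. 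Collecting the estimates, dividing the coercivity inequality by $\|e_h\|_{H^{1/2}(\Gamma)}$, and combining with the initial triangle inequality yields the claim. The main obstacle is not any single estimate but the bookkeeping in the regrouping step: one must verify that the affine data $u_f,u_{f,h},u_d$ cancel correctly, so that only the genuinely comparable pairs $(\partial_n p,\partial_n^h p_h(u))$ and $(\partial_n Sz,\partial_n^h S_h Q_h z)$ remain as leading terms, with every crossed remainder falling back onto the already-controlled error $\|z-\tilde z_h\|_{H^{1/2}(\Gamma)}$.
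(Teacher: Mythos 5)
Your proposal is correct and follows essentially the same route as the paper: a triangle inequality around $\tilde z_h$, coercivity of $T_h^\nu$ applied to $z_h-\tilde z_h$, the consistency identity isolating $(T^\nu-T_h^\nu)\tilde z_h$ and $g-g_h$, and stability of $S_h^*$, $N_h$, $T^\nu$, $T_h^\nu$ together with the $H^{1/2}(\Gamma)$-stability of $Q_h$ to absorb the remainders. The only difference is cosmetic bookkeeping — you regroup the perturbation into the adjoint-state structure at $\tilde z_h$ and then pass to $z$ and $Q_h z$, whereas the paper adds and subtracts $T^\nu z - T_h^\nu Q_h z$ directly — but both yield exactly the same four leading terms.
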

\begin{proof}
  First, we confirm that the bilinear form $\left<T_h^\nu \cdot, \cdot\right>$
  is $V_h^\partial$-elliptic and continuous, this is, for all $v_h,w_h\in V_h^\partial$ there holds
  \begin{align*}
    \gamma\, \left<T_h^\nu v_h,v_h\right> &\ge \|v_h\|_{H^{1/2}(\Gamma)}^2,\\
    \left<T_h^\nu v_h,w_h\right>&\le c\, \|v_h\|_{H^{1/2}(\Gamma)}\,\|w_h\|_{H^{1/2}(\Gamma)},
  \end{align*}
  with some constant $\gamma>0$ independent of $h$.
  This follows directly from the mapping properties
  of $N_h$, $S_h$ and $S_h^*$ as well as Lemma \ref{lem:stability_normal}. 
  In the following we write $w_h:= z_h - \tilde z_h$.
  With the ellipticity, the equations \eqref{eq:variational_form_discrete}  
  and \eqref{eq:def_tilde_zh} and Young's inequality, we obtain
 \begin{align}\label{eq:general_est_1}
  \gamma\,\|w_h\|_{H^{1/2}(\Gamma)}^2 &\le \left<T^\nu_h(z_h-\tilde z_h),w_h\right> \nonumber\\
  &= \left<g_h - g + (T^\nu - T_h^\nu)\tilde z_h,w_h\right> \nonumber\\
  &\le \frac{\gamma}2\,\|w_h\|_{H^{1/2}(\Gamma)}^2 
  + c\,\|g-g_h+(T^\nu - T_h^\nu) \tilde z_h\|_{H^{-1/2}(\Gamma)}^2.  
 \end{align}
 Insertion of the definitions of $g$ and $g_h$ yields
 \begin{equation}\label{eq:general_est_2}
  g_h-g = (S^*-S_h^*)(Pf-u_d) + S_h^* (P-P_h) f.
 \end{equation}
 Rearrangement of the remaining terms and the definitions of $T^\nu$ and $T_h^\nu$ lead to
 \begin{align}\label{eq:general_est_3}
 & (T^\nu - T_h^\nu) \tilde z_h\nonumber\\
 &\quad=  T^\nu(\tilde z_h- z) + T^\nu z - T_h^\nu Q_h z + T_h^\nu (Q_h z - \tilde z_h) \nonumber\\
   &\quad= T^\nu(\tilde z_h - z) + (S^*-S_h^*)Sz + S_h^*(Sz - S_h Q_h z) \nonumber\\
   &\qquad + \nu\,(Nz - N_h Q_h z) + T_h^\nu(Q_h z - \tilde z_h)
 \end{align}
 Next, we insert \eqref{eq:general_est_2} and \eqref{eq:general_est_3} into \eqref{eq:general_est_1},
 apply the triangle inequality, and use the abbreviations 
 \[u = Sz + Pf,\quad \partial_n p = S^*(u-u_d),\quad \partial_n(Sz) = Nz,\]
 as well as their discrete counterparts
 \[u_h(Q_h z) = S_hQ_h z + P_h f,\quad \partial_n^h p_h(u)= S_h^*(u-u_d),\quad \partial_n^h(S_hQ_h z) = N_h Q_h z.\]
 Insertion of \eqref{eq:general_est_2} and \eqref{eq:general_est_3} into
 \eqref{eq:general_est_1}, and exploiting the stability estimates
 \begin{align*}
   \|T^\nu v\|_{H^{-1/2}(\Gamma)}&\le c\, \|v\|_{H^{1/2}(\Gamma)}, &
   \|T_h^\nu v_h\|_{H^{-1/2}(\Gamma)} &\le c\, \|v_h\|_{H^{1/2}(\Gamma)},\\
   \|S_h^* v\|_{H^{-1/2}(\Gamma)} &\le c\, \|v\|_{L^2(\Omega)}, &&
 \end{align*}
  that can be concluded from Lemma \ref{lem:stability_normal}, as well as the stability of $Q_h$ in $H^{1/2}(\Gamma)$ \cite{Ste01}, leads to the estimate
 \begin{align*}
  \frac\gamma2\|w_h\|_{H^{1/2}(\Gamma)}^2 &\le c\,\|z-\tilde z_h\|_{H^{1/2}(\Gamma)}^2 + \|\partial_n p - \partial_n^h p_h(u)\|_{H^{-1/2}(\Gamma)}^2 \\
  &+ c\,\|u - u_h(Q_h z)\|_{L^{2}(\Omega)}^2
  + \|\partial_n(Sz) - \partial_n^h(S_hQ_h z)\|_{H^{-1/2}(\Gamma)}^2.  
 \end{align*}  
 With the triangle inequality $\|z-z_h\|_{H^{1/2}(\Gamma)} \le \|z-\tilde z_h\|_{H^{1/2}(\Gamma)} + \|w_h\|_{H^{1/2}(\Gamma)}$ we conclude the
 assertion.
\end{proof}

This general estimate and the estimates presented in Lemma \ref{lem:error_estimate_l2},
Theorems \ref{thm:main_result_nonconvex}, \ref{thm:error_estimate} and Lemma \ref{lem:cea} lead to the main result of this section.
\begin{theorem}\label{thm:ocp_h12_error}
  Let $\Omega\subset\mathbb R^2$ be an arbitrary polygonal domain and assume that
  $f, u_d\in L^2(\Omega)$.
  Let $(u,z,p)$ be the solution of \eqref{eq:optimality_system}, and $(u_h,z_h,p_h)$ the
  corresponding finite element approximation solving \eqref{eq:optimality_system_disc}.
  Then, the error estimate 
  \begin{equation}\label{eq:main_result_arbitrary}
    \|z-z_h\|_{H^{1/2}(\Gamma)} 
    \le c\, h^{\min\{1,\overline\lambda-\varepsilon\}}
  \end{equation}
  is valid for arbitrary $\varepsilon>0$. 
  
  Furthermore, if $\Omega$ is convex and $u_d\in C^{0,\sigma}(\overline\Omega)$ for some $\sigma\in(0,1)$,
  there holds the estimate
  \begin{equation}\label{eq:main_result_convex}
    \|z-z_h\|_{H^{1/2}(\Gamma)} 
    \le c\, h^{\min\{3/2,\overline\lambda-\varepsilon\}}.
  \end{equation}  
  Note that $\bar\lambda := \pi/\max_{j\in\mathcal C} \omega_j$.
  
  The constant $c$ depends linearly on the functions $z, Sz, Pf$ and $p$, more precisely,
  \[
    c = \begin{cases}
      c\left(\|z\|_{W^{3/2,2}_{\vec\alpha}(\Gamma)}
        +|Sz|_{W^{2,2}_{\vec\alpha}(\Omega)} + |Pf|_{W^{2,2}_{\vec\alpha}(\Omega)} + |p|_{W^{2,2}_{\vec\alpha}(\Omega)}\right), & \mbox{in }\eqref{eq:main_result_arbitrary},\\
      c\left(|z|_{W^{2,2}_{\vec\gamma}(\Gamma)}
        +|Sz|_{W^{2,\infty}_{\vec\beta}(\Omega)} +|Pf|_{W^{2,2}_{\vec\alpha}(\Omega)} +  |p|_{W^{2,\infty}_{\vec\beta}(\Omega)}\right), & \mbox{in }\eqref{eq:main_result_convex}.   
    \end{cases}
  \]
  The weights are defined by $\alpha_j:=\max\{0,1-\lambda_j+\varepsilon\}$, 
  $\beta_j:=\max\{0,2-\lambda_j+\varepsilon\}$ and $\gamma_j:=\max\{0,3/2-\lambda_j+\varepsilon\}$ for all $j\in\mathcal C$.
\end{theorem}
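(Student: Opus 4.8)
The plan is to invoke the general error estimate of the preceding lemma and to bound each of its four contributions by the sharp results already established, feeding in the regularity furnished by Lemma \ref{lem:regularity}. Concretely, in the general case we may use $Sz,Pf\in W^{2,2}_{\vec\alpha}(\Omega)$, $p\in V^{2,2}_{\vec\alpha}(\Omega)$ and $z\in W^{3/2,2}_{\vec\alpha}(\Gamma)$ with $\alpha_j>1-\lambda_j$, whereas for convex $\Omega$ and $u_d\in C^{0,\sigma}(\overline\Omega)$ we additionally have $Sz\in W^{2,\infty}_{\vec\beta}(\Omega)$, $p\in V^{2,\infty}_{\vec\beta}(\Omega)$ and $z\in W^{2,\infty}_{\vec\beta}(\Gamma)\cap W^{2,2}_{\vec\gamma}(\Gamma)$ with $\beta_j>2-\lambda_j$, $\gamma_j>3/2-\lambda_j$. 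Selecting the minimal admissible weights $\alpha_j=\max\{0,1-\lambda_j+\varepsilon\}$, $\beta_j=\max\{0,2-\lambda_j+\varepsilon\}$, $\gamma_j=\max\{0,3/2-\lambda_j+\varepsilon\}$ will translate the various weighted rates into powers of $h$ governed by $\overline\lambda$.

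For the arbitrary polygonal domain I would treat the four terms as follows. The C\'ea term $\|z-\tilde z_h\|_{H^{1/2}(\Gamma)}$ is bounded by Lemma \ref{lem:cea} (first branch), giving $h^{1-\overline{\vec\alpha}}$. The two normal-derivative terms are each handled by Theorem \ref{thm:main_result_nonconvex}: for $\|\partial_n p-\partial_n^h p_h(u)\|_{H^{-1/2}(\Gamma)}$ one applies it to the adjoint problem with data $u-u_d\in L^2(\Omega)$ and homogeneous boundary values, exploiting $p\in V^{2,2}_{\vec\alpha}(\Omega)\hookrightarrow W^{2,2}_{\vec\alpha}(\Omega)$; for $\|\partial_n(Sz)-\partial_n^h(S_hQ_hz)\|_{H^{-1/2}(\Gamma)}$ one applies it to the Laplace problem with $f\equiv 0$ and boundary datum $z$. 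Both yield $h^{1-\overline{\vec\alpha}}$. The state term $\|u-u_h(Q_hz)\|_{L^2(\Omega)}$ is controlled by Lemma \ref{lem:error_estimate_l2} and is of order $h^{3/2-\overline{\vec\alpha}+\varepsilon(\Omega)}$, hence subordinate. Since $1-\overline{\vec\alpha}=\min\{1,\overline\lambda-\varepsilon\}$ for this choice of weights, collecting the contributions yields \eqref{eq:main_result_arbitrary}.

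For convex $\Omega$ with H\"older continuous $u_d$ the same four terms are estimated with the sharper tools. The C\'ea term is now bounded by the second branch of Lemma \ref{lem:cea} by $h^{3/2-\overline{\vec\gamma}}$, and the state $L^2$ term by Lemma \ref{lem:error_estimate_l2} with $\varepsilon(\Omega)=1/2$, which is of order $h^{2-\overline{\vec\alpha}}$ and again subordinate. The two normal-derivative terms are estimated by Theorem \ref{thm:error_estimate}: the adjoint term with $y$ taken as $p$ (using $p\in V^{2,\infty}_{\vec\beta}(\Omega)\hookrightarrow W^{2,\infty}_{\vec\beta}(\Omega)$ and homogeneous boundary datum) and the state term with $y=Sz$, $g=z$. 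Both produce a factor of order $h^{3/2}\bigl(h^{-\max\{0,\overline{\vec\beta}-1/2\}}+h^{-\overline{\vec\gamma}}\bigr)\,\lvert\ln h\rvert^{z/2}$, and with the chosen weights one checks that $3/2-\max\{0,\overline{\vec\beta}-1/2\}=3/2-\overline{\vec\gamma}=\min\{3/2,\overline\lambda-\varepsilon\}$, which dominates, whence \eqref{eq:main_result_convex} follows.

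The heavy analysis having been done in the cited lemmas, the remaining work is bookkeeping, but two points require care and constitute the main obstacle. First, Theorem \ref{thm:error_estimate} demands continuity of $\partial_n y$ at the corners for both $y=Sz$ and $y=p$; this must be verified from the $W^{2,\infty}_{\vec\beta}$-regularity with $\beta_j<1$ (which holds precisely for convex corners), since then $\int_0^r s^{-\beta_j}\,\mathrm ds\to 0$ forces $\nabla y$ to extend continuously up to each corner. Second, one must confirm that the borderline logarithmic factors $\lvert\ln h\rvert^{z/2}$ arising when $\overline{\vec\beta}=1/2$ can be absorbed into the arbitrarily small $\varepsilon$ by enlarging the weights slightly, and that the $V$-regularity of the homogeneous adjoint state indeed embeds into the $W$-regularity the normal-derivative theorems require. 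Matching the minimal admissible weights to the clean $\overline\lambda-\varepsilon$ form then completes the argument.
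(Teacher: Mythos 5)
Your overall assembly is exactly the paper's intended proof: insert Lemma \ref{lem:cea}, Lemma \ref{lem:error_estimate_l2} and Theorems \ref{thm:main_result_nonconvex}, \ref{thm:error_estimate} into the four-term general estimate, using the regularity of Lemma \ref{lem:regularity} with the minimal admissible weights. Your exponent arithmetic, the observation that the $L^2(\Omega)$-state term and the C\'ea term are subordinate, the embedding $V^{2,p}_{\vec\beta}(\Omega)\hookrightarrow W^{2,p}_{\vec\beta}(\Omega)$, and the absorption of the logarithm at the borderline $\overline{\vec\beta}=1/2$ by perturbing $\varepsilon$ are all correct.

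However, your verification of the corner-continuity hypothesis of Theorem \ref{thm:error_estimate} contains a genuine gap. From $y\in W^{2,\infty}_{\vec\beta}(\Omega)$ with $\beta_j<1$ you correctly conclude that $\nabla y$ extends continuously to each corner $\boldsymbol c_j$, but this does \emph{not} imply that $\partial_n y$ is continuous at $\boldsymbol c_j$ as a function on $\Gamma$: the two edges meeting at $\boldsymbol c_j$ carry different outward normals $n_{j-1}\ne n_j$, so the one-sided limits are $\nabla y(\boldsymbol c_j)\cdot n_{j-1}$ and $\nabla y(\boldsymbol c_j)\cdot n_j$, which differ unless $\nabla y(\boldsymbol c_j)$ happens to be orthogonal to $n_{j-1}-n_j$. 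A counterexample is $y=x_1$ on the unit square: $y$ is harmonic and smooth up to the boundary, yet $\partial_n y$ jumps from $0$ to $-1$ at the origin. The correct verification exploits the structure of the optimality system, which is precisely why Lemma \ref{lem:regularity} records $p\in V^{2,\infty}_{\vec\beta}(\Omega)$ in the $V$-space rather than merely the $W$-space: this gives $|\nabla p(x)|\le c\,r_j(x)^{1-\beta_j}\to 0$, so $\partial_n p$ is continuous at every corner with value $0$ (alternatively, gradient continuity together with the homogeneous Dirichlet data forces $\nabla p(\boldsymbol c_j)$ to be parallel to both normals, hence zero). For the state, no such argument is available from the regularity of $Sz$ alone, since its Dirichlet datum $z$ does not vanish and the normal derivative of a harmonic function with smooth boundary data generically jumps at corners; instead one must invoke the third equation of \eqref{eq:optimality_system}, $\nu\,Nz+\partial_n p=0$, i.e.\ $\partial_n(Sz)=-\nu^{-1}\,\partial_n p$, which transfers the continuity (indeed the vanishing) of $\partial_n p$ at the corners to $\partial_n(Sz)$. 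Without this step your application of Theorem \ref{thm:error_estimate} to $y=Sz$ is unjustified.
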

As a simple conclusion we also obtain an error estimate for the state variable in the energy norm.
\begin{corollary}\label{cor:estimate_states}
  Assume that $f,\,u_d\in L^2(\Omega)$.
  Let $u\in H^1(\Omega)$ and $u_h\in V_h$ be the optimal states of
  \eqref{eq:target}-\eqref{eq:state_eq} and   \eqref{eq:target_discrete}-\eqref{eq:state_eq_discrete},
  respectively.
  Then, the error estimate
  \begin{equation*}
    \|u-u_h\|_{H^1(\Omega)} \le c\, h^{\min\{1,\overline\lambda-\varepsilon\}}
  \end{equation*}
  holds for arbitrary but sufficiently small $\varepsilon>0$.
  The constant $c>0$ is the same as in the previous theorem.
\end{corollary}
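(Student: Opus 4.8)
The plan is to reduce the state error to quantities that have already been estimated in the preceding results, by splitting $u-u_h$ according to the two solution operators and inserting the $L^2(\Gamma)$-projection of the control as an intermediate quantity. Since $u=Sz+Pf$ and $u_h=S_hz_h+P_hf$, I would write
\begin{equation*}
  u-u_h = (Pf-P_hf) + (Sz-S_hQ_hz) + S_h(Q_hz-z_h),
\end{equation*}
where $Q_hz\in V_h^\partial$ is the $L^2(\Gamma)$-projection of $z$, so that $S_hQ_hz$ is well defined (recall $S_h$ acts on $V_h^\partial$).

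For the first two terms I would apply the $H^1(\Omega)$-estimate of Lemma \ref{lem:error_estimate_l2}. The term $Pf-P_hf$ is the finite element error of \eqref{eq:bvp} with right-hand side $f$ and homogeneous Dirichlet data, while $Sz-S_hQ_hz$ is the error of the same problem with $f\equiv 0$, Dirichlet datum $g=z$ and $g_h=Q_hz$. Using the regularity $Sz,Pf\in W^{2,2}_{\vec\alpha}(\Omega)$ and $z\in W^{3/2,2}_{\vec\alpha}(\Gamma)$ from Lemma \ref{lem:regularity}, Lemma \ref{lem:error_estimate_l2} bounds both terms by $c\,h^{1-\overline{\vec\alpha}}$ times the respective weighted norms.

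For the third term I would exploit that $S_h$ coincides with the discrete harmonic extension $B_h$ from Lemma \ref{lem:stability_normal}, which satisfies $\|S_hv_h\|_{H^1(\Omega)}\le c\,\|v_h\|_{H^{1/2}(\Gamma)}$. Hence
\begin{equation*}
  \|S_h(Q_hz-z_h)\|_{H^1(\Omega)} \le c\,\|Q_hz-z_h\|_{H^{1/2}(\Gamma)}
  \le c\,\big(\|z-Q_hz\|_{H^{1/2}(\Gamma)} + \|z-z_h\|_{H^{1/2}(\Gamma)}\big).
\end{equation*}
The control error $\|z-z_h\|_{H^{1/2}(\Gamma)}$ is bounded by $c\,h^{\min\{1,\overline\lambda-\varepsilon\}}$ by Theorem \ref{thm:ocp_h12_error}, while the projection error is controlled by the $H^{1/2}(\Gamma)$-stability of $Q_h$ \cite{Ste01} together with Lemma \ref{lem:int_error_g}, giving $\|z-Q_hz\|_{H^{1/2}(\Gamma)}\le c\,\|z-I_hz\|_{H^{1/2}(\Gamma)}\le c\,h^{1-\overline{\vec\alpha}}\,\|z\|_{W^{3/2,2}_{\vec\alpha}(\Gamma)}$.

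Finally I would collect the three bounds and observe that, with the weights $\alpha_j=\max\{0,1-\lambda_j+\varepsilon\}$ and $\overline\lambda=\min_{j\in\mathcal C}\lambda_j$, one has $1-\overline{\vec\alpha}=\min\{1,\overline\lambda-\varepsilon\}$, so every contribution carries the same rate and the constant is exactly the one from Theorem \ref{thm:ocp_h12_error}. The only step requiring genuine care, and hence the main obstacle, is the third term: one must insert $Q_hz$ rather than $z$ so that $S_h$ is applied to a discrete function, and then rely on the $H^{1/2}(\Gamma)$-stability of both $Q_h$ and the discrete harmonic extension in order to transfer the already-established $H^{1/2}(\Gamma)$-control error into an $H^1(\Omega)$-state error without losing a power of $h$.
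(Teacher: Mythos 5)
Your proof is correct and follows essentially the same route as the paper: both insert $S_hQ_hz$ (plus the discrete lift of $f$) as the intermediate quantity, bound the resulting finite element error via Lemma~\ref{lem:error_estimate_l2} with the regularity from Lemma~\ref{lem:regularity}, and handle the remaining discrete term through the $H^{1/2}(\Gamma)$-stability of $S_h$ and $Q_h$ combined with Theorem~\ref{thm:ocp_h12_error}. The only cosmetic differences are that you apply Lemma~\ref{lem:error_estimate_l2} separately to the $Pf$ and $Sz$ pieces instead of once to the combined problem, and you insert the harmless extra term $\|z-Q_hz\|_{H^{1/2}(\Gamma)}$ where the paper bounds $\|Q_h(z-z_h)\|_{H^{1/2}(\Gamma)}\le c\,\|z-z_h\|_{H^{1/2}(\Gamma)}$ directly by stability.
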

\begin{proof}
  With the triangle inequality we get
  \begin{equation*}
    \|u-u_h\|_{H^1(\Omega)}
    \le \|Sz+u_f - (S_h Q_h z+u_{f,h})\|_{H^1(\Omega)} + \|S_hQ_h(z-z_h)\|_{H^1(\Omega)}.
  \end{equation*}
  Note that $S_h Q_h z + u_{f,h}$ is the finite element approximation of $u:=Sz + u_f$.
  Thus, we infer with Lemma~\ref{lem:error_estimate_l2}
  \begin{equation*}
    \|Sz+u_f - (S_h Q_h z+u_{f,h})\|_{H^1(\Omega)}
    \le c\, h^{\min\{1,\overline\lambda-\varepsilon\}}\,\left(|u|_{W^{2,2}_{\vec\alpha}(\Omega)}
        + \|z\|_{W^{3/2,2}_{\vec\alpha}(\Gamma)} \right).
    \end{equation*}
    Moreover, with stability properties of $S_h$ and $Q_h$ we get
    \begin{equation*}
      \|S_hQ_h(z-z_h)\|_{H^1(\Omega)} \le c\, \|Q_h(z-z_h)\|_{H^{1/2}(\Gamma)}
      \le c\, \|z-z_h\|_{H^{1/2}(\Gamma)}
    \end{equation*}
    and with \eqref{eq:main_result_arbitrary} we conclude the assertion.
  \end{proof}

\section{Numerical experiments}\label{sec:experiments}

In order to confirm the theoretically predicted convergence results
we present some numerical experiments measuring the convergence rates.
Thus, we computed the problem
\eqref{eq:target}--\eqref{eq:state_eq} in the domains
\begin{align*}
  \Omega_{90} &= (0,1)^2,\\
  \Omega_{135} &= (-1,1)^2 \cap \{(r\cos\varphi,r\sin\varphi)\colon r\in (0,\infty), \varphi\in (0,3\pi/4)\},\\
  \Omega_{270} &= (-1,1)^2\setminus [0,1]^2,
\end{align*}
with input data $\nu = 1$, $f\equiv 0$ and $u_d(x_1,x_2) = x_1+x_2$.

We start with a structured grid consisting of 2, 3 or 6 triangles, respectively, and 
compute the discrete solutions solving \eqref{eq:optimality_system_disc}
on a sequence of meshes obtained by bisection of each element so that new nodes 
of the grid are inserted at the midpoints of the longest edge of each element.
The solution was computed by a GMRES method applied to the system \eqref{eq:variational_form_discrete} and in each iteration the linearized state and adjoint equation have to be solved. This
was done by the parallel direct solver MUMPS which allows to reuse the factorization of the
stiffness matrix. The implementation is written in C++ and the tests were performed on a
Intel-Core-i7-4770 (4x 3400MHz) machine with 32GB RAM.

As an explicit representation of the exact solution is not available for the given input data
we measured the error by comparison with the solution on a very fine mesh
with maximal element diameter $h_{ref}=2^{-10}$.
In Tables \ref{tab:domain_90}, \ref{tab:domain_135} and \ref{tab:domain_270} we report the error of the state in $H^1(\Omega)$, and the control in the 
$L^2(\Gamma)$-norm and the $H^{1/2}(\Gamma)$-seminorm, respectively. 
The latter norm is realized by the discrete harmonic extension $S_h$, this is, 
\[
  |z-z_h|_{H^{1/2}(\Gamma)} \approx |z_{h_{ref}} - z_h|_{H^{1/2}(\Gamma)} \sim \|\nabla S_{h_{ref}} (z_{h_{ref}} - z_h)\|_{L^2(\Omega)}.
\]

The convergence rates measured for the domain $\Omega_{90}$ are the same as in the experiments
from \cite{OPS13}. These results confirm the rates predicted in Theorem~\ref{thm:ocp_h12_error}
and Corollary~\ref{cor:estimate_states}.
Note that the largest opening angle is $\bar\omega=\pi/2$ and thus, $\bar\lambda = 2$.
Our theory moreover claims that the convergence rate for the discrete control is reduced when
the largest opening angle
exceeds the limiting case $2\pi/3$.
This is the case for the domain $\Omega_{135}$, where we have 
$\bar\omega = 3\pi/4$ and $\bar\lambda = 4/3$. The rate $4/3$ for the the control in the
$H^{1/2}(\Gamma)$-norm claimed in Theorem~\ref{thm:ocp_h12_error} is the rate we also
observe numerically. The convergence rate for the discrete states is still $1$ as
proved in Corollary~\ref{cor:estimate_states}.
The fact that our error estimates are also valid and sharp for non-convex domains is confirmed
by the experiment for the domain $\Omega_{270}$. Here, the rate $\bar\lambda=2/3$ is almost observed numerically for the discrete states and controls in $H^1(\Omega)$ and $H^{1/2}(\Gamma)$, respectively.
Note that the convergence rates in the experiments are always slightly better than predicted which is due to the approximate computation of the error by comparison with a reference solution on a fine grid.

Moreover, we have to notice that we have not proved error estimates for the control in $L^2(\Gamma)$,
but the experiments confirm in all cases that this convergence rate is higher by $1/2$
compared to the rate obtained in the $H^{1/2}(\Gamma)$-norm.
In order to obtain estimates in $L^2(\Gamma)$ one has to establish a Nitsche trick for
the non-conforming approximation \eqref{eq:variational_form_discrete} of \eqref{eq:variational_form}.
This will be subject of future research.

\begin{table}[htbp]
  \centering
  \begin{tabular}{rrrrrrr}
    \toprule
     \multicolumn{1}{l}{$h\cdot\sqrt{2}$} & \multicolumn{1}{l}{$\#BdDof$} & \multicolumn{1}{l}{$\#Dof$} & \multicolumn{1}{l}{$|u-u_h|_{H^1(\Omega)}$} & \multicolumn{1}{l}{$\|z-z_h\|_{L^2(\Gamma)}$} & \multicolumn{1}{l}{$|z-z_h|_{H^{1/2}(\Gamma)}$} \\
    \midrule
    % $2^{-1}$ & 9 & 16 & 2.11e-02 (0.00) & 2.84e-03 (0.00) & 1.07e-02 (0.00) \\ 
    % $2^{-2}$ & 49 & 32 & 1.25e-02 (0.75) & 9.99e-04 (1.51) & 4.24e-03 (1.33) \\ 
    % $2^{-3}$ & 225 & 64 & 6.63e-03 (0.92) & 2.50e-04 (2.00) & 1.44e-03 (1.56) \\ 
    $2^{-4}$ & 961 & 128 & 3.37e-03 (0.98) & 6.27e-05 (2.00) & 4.93e-04 (1.55) \\ 
    $2^{-5}$ & 3969 & 256 & 1.69e-03 (1.00) & 1.57e-05 (2.00) & 1.71e-04 (1.53) \\ 
    $2^{-6}$ & 16129 & 512 & 8.43e-04 (1.00) & 3.92e-06 (2.00) & 6.04e-05 (1.50) \\ 
    $2^{-7}$ & 65025 & 1024 & 4.19e-04 (1.01) & 9.72e-07 (2.01) & 2.22e-05 (1.44) \\ 
    $2^{-8}$ & 261121 & 2048 & 2.04e-04 (1.04) & 2.35e-07 (2.05) & 8.64e-06 (1.36) \\ 
    $2^{-9}$ & 1046530 & 4096 & 9.13e-05 (1.16) & 5.07e-08 (2.22) & 3.30e-06 (1.39) \\     
    \midrule
    \multicolumn{3}{l}{Theory:} & \phantom{0.00e+00} (1.00) &  \phantom{0.00e+00} (2.00) &  \phantom{0.00e+00} (1.50)\\
    \bottomrule
\end{tabular}
\caption{Results of the numerical experiment for the domain $\Omega_{90}$ showing finite
  element error and corresponding experimental convergence rates (in parentheses) for the state and
control.}
\label{tab:domain_90}
\end{table}

\begin{table}[htbp]
  \centering
\begin{tabular}{rrrrrr}
\toprule
  \multicolumn{1}{l}{$h\cdot\sqrt2$} & \multicolumn{1}{l}{$\#BdDof$} & \multicolumn{1}{l}{$\#Dof$} & \multicolumn{1}{l}{$|u-u_h|_{H^1(\Omega)}$} & \multicolumn{1}{l}{$\|z-z_h\|_{L^2(\Gamma)}$} & \multicolumn{1}{l}{$|z-z_h|_{H^{1/2}(\Gamma)}$} \\
  \midrule
%   $2^{-1}$ & 15 & 20 & 4.41e-02 (0.00) & 8.27e-03 (0.00) & 2.00e-02 (0.00) \\ 
% $2^{-2}$ & 77 & 40 & 2.27e-02 (0.96) & 2.17e-03 (1.93) & 7.45e-03 (1.43) \\ 
% $2^{-3}$ & 345 & 80 & 1.17e-02 (0.95) & 5.79e-04 (1.90) & 2.79e-03 (1.42) \\ 
$2^{-4}$ & 1457 & 160 & 5.93e-03 (0.98) & 1.56e-04 (1.90) & 1.06e-03 (1.40) \\ 
$2^{-5}$ & 5985 & 320 & 2.98e-03 (0.99) & 4.19e-05 (1.89) & 4.05e-04 (1.38) \\ 
$2^{-6}$ & 24257 & 640 & 1.49e-03 (1.00) & 1.13e-05 (1.89) & 1.58e-04 (1.36) \\ 
$2^{-7}$ & 97665 & 1280 & 7.44e-04 (1.01) & 3.05e-06 (1.89) & 6.22e-05 (1.34) \\ 
$2^{-8}$ & 391937 & 2560 & 3.63e-04 (1.03) & 7.98e-07 (1.93) & 2.48e-05 (1.33) \\ 
$2^{-9}$ & 1570300 & 5120 & 1.63e-04 (1.16) & 1.80e-07 (2.15) & 9.35e-06 (1.41) \\ 
  \midrule
  \multicolumn{3}{l}{Theory:} & \phantom{0.00e+00} (1.00) &  \phantom{0.00e+00} (1.83) &  \phantom{0.00e+00} (1.33)\\
    \bottomrule
\end{tabular}
\caption{Results of the numerical experiment for the domain $\Omega_{135}$ showing finite
  element error and corresponding experimental convergence rates (in parentheses) for the state and
control.}
\label{tab:domain_135}
\end{table}

\begin{table}[htbp]
  \centering
\begin{tabular}{rrrrrr}
\toprule
  \multicolumn{1}{l}{$h\cdot\sqrt2$} & \multicolumn{1}{l}{$\#BdDof$} & \multicolumn{1}{l}{$\#Dof$} & \multicolumn{1}{l}{$|u-u_h|_{H^1(\Omega)}$} & \multicolumn{1}{l}{$\|z-z_h\|_{L^2(\Gamma)}$} & \multicolumn{1}{l}{$|z-z_h|_{H^{1/2}(\Gamma)}$} \\
  \midrule
  % $2^{-1}$ & 33 & 32 & 7.36e-01 (0.00) & 4.76e-01 (0.00) & 4.78e-01 (0.00) \\ 
  % $2^{-2}$ & 161 & 64 & 4.52e-01 (0.70) & 2.17e-01 (1.14) & 2.73e-01 (0.81) \\ 
  % $2^{-3}$ & 705 & 128 & 2.77e-01 (0.71) & 8.97e-02 (1.27) & 1.61e-01 (0.76) \\ 
  $2^{-4}$ & 2945 & 256 & 1.71e-01 (0.70) & 3.68e-02 (1.28) & 9.83e-02 (0.71) \\ 
  $2^{-5}$ & 12033 & 512 & 1.06e-01 (0.69) & 1.51e-02 (1.29) & 6.07e-02 (0.69) \\ 
  $2^{-6}$ & 48641 & 1024 & 6.53e-02 (0.69) & 6.11e-03 (1.30) & 3.74e-02 (0.70) \\ 
  $2^{-7}$ & 195585 & 2048 & 4.01e-02 (0.71) & 2.43e-03 (1.33) & 2.25e-02 (0.73) \\ 
  $2^{-8}$ & 784385 & 4096 & 2.38e-02 (0.75) & 9.03e-04 (1.43) & 1.27e-02 (0.82) \\ 
  $2^{-9}$ & 3141630 & 8192 & 1.27e-02 (0.91) & 2.68e-04 (1.75) & 6.05e-03 (1.07) \\ 
  \midrule
      \multicolumn{3}{l}{Theory:} & \phantom{0.00e+00} (0.67) &  \phantom{0.00e+00} (1.17) &  \phantom{0.00e+00} (0.67)\\
    \bottomrule
\end{tabular}
\caption{Results of the numerical experiment for the domain $\Omega_{270}$ showing finite
  element error and corresponding experimental convergence rates (in parentheses) for the state and
control.}
\label{tab:domain_270}
\end{table}
\bigskip

\textbf{Acknowledgement:}
  The author acknowledges the fruitful discussions with Johannes Pfefferer
  during the preparation of the manuscript.

\bibliographystyle{plain}
\bibliography{bibliography2}

\begin{thebibliography}{10}

\bibitem{AL85}
V.~I. Agoshkov and V.~I. Lebedev.
\newblock Poincar\'e-{S}teklov operators and methods of partition of the domain
  in variational problems.
\newblock In {\em Computational processes and systems, {N}o.\ 2}, pages
  173--227. ``Nauka'', Moscow, 1985.

\bibitem{AMPR15}
Th. Apel, M.~Mateos, J.~Pfefferer, and A.~R\"osch.
\newblock {On the Regularity of the Solutions of Dirichlet Optimal Control
  Problems in Polygonal Domains.}
\newblock {\em SIAM J. Control Optim.}, 53(6):3620--3641, 2015.

\bibitem{AMPR16}
Th. Apel, M.~Mateos, J.~Pfefferer, and A.~R\"osch.
\newblock Error estimates for {D}irichlet control problems in polygonal
  domains: {Q}uasi-uniform meshes.
\newblock {\em Math. Control Relat. F.}, 8(1):217--245, 2018.

\bibitem{APR12}
Th. Apel, J.~Pfefferer, and A.~R\"osch.
\newblock {Finite element error estimates for Neumann boundary control problems
  on graded meshes.}
\newblock {\em Comp. Opt. and Appl.}, 52(1):3--28, 2012.

\bibitem{APW17}
Th. Apel, J.~Pfefferer, and M.~Winkler.
\newblock Error estimates for the postprocessing approach applied to neumann
  boundary control problems in polyhedral domains.
\newblock {\em IMA J. Numer. Anal.}, published electronically:drx059, 2017.

\bibitem{ApelSteinbachWinkler2016}
Th. Apel, O.~Steinbach, and M.~Winkler.
\newblock Error estimates for {N}eumann boundary control problems with energy
  regularization.
\newblock {\em J. of Numer. Math.}, 24(4):207--233, 2016.

\bibitem{BCD04}
S.~{Bartels}, C.~{Carstensen}, and G.~{Dolzmann}.
\newblock {Inhomogeneous Dirichlet conditions in a priori and a posteriori
  finite element error analysis.}
\newblock {\em {Numer. Math.}}, 99(1):1--24, 2004.

\bibitem{CR06}
E.~Casas and J.-P. Raymond.
\newblock {Error estimates for the numerical approximation of Dirichlet
  boundary control of semilinear elliptic equations}.
\newblock {\em SIAM J. Control and Optim.}, 45:1586--1611, 2006.

\bibitem{Cia91}
P.~G. Ciarlet.
\newblock {Basic Error Estimates for Elliptic Problems.}
\newblock In {\em {Finite Element Methods}}, volume~2 of {\em {Handbook of
  Numerical Analysis}}, pages 17--352. Elsevier, North-Holland, 1991.

\bibitem{DGS11}
A.~Demlow, J.~Guzm\'an, and A.H. Schatz.
\newblock {Local energy estimates for the finite element method on sharply
  varying grids.}
\newblock {\em Math. Comp.}, 80(273):1--9, 2011.

\bibitem{GT98}
D.~{Gilbarg} and N.S. {Trudinger}.
\newblock {\em {Elliptic partial differential equations of second order. 2nd
  ed.}}, volume 224 of {\em {Grundlehren der Mathematischen Wissenschaften}}.
\newblock Springer, Berlin, 1998.

\bibitem{Gri85}
P.~Grisvard.
\newblock {\em {Elliptic problems in nonsmooth domains}}.
\newblock Pitman, Boston, 1985.

\bibitem{HMW14}
T.~Horger, M.~Melenk, and B.I. Wohlmuth.
\newblock {On optimal L2- and surface flux convergence in FEM}.
\newblock {\em Comput. Vis. Sci.}, 16(5):231--246, 2015.

\bibitem{JSW18}
L.~John, P.~Swierczynski, and B.~Wohlmuth.
\newblock Energy corrected {FEM} for optimal {D}irichlet boundary control
  problems.
\newblock {\em Numer. Math.}, 139(4):913--938, 2018.

\bibitem{John14}
L.~J. John.
\newblock {\em Optimal Boundary Control in Energy Spaces Preconditioning and
  Applications}.
\newblock PhD thesis, TU Graz, 2014.

\bibitem{KMR01}
V.~A. Kozlov, V.~G. Maz'ya, and J.~Rossmann.
\newblock {\em {Spectral problems associated with corner singularities of
  solutions to elliptic equations}}.
\newblock Number~85. American Mathematical Soc., 2001.

\bibitem{Lio71}
J.~L. Lions.
\newblock {\em {Optimal control of systems governed by partial differential
  equations}}.
\newblock {Grundlehren der mathematischen Wissenschaften}. Springer, New York,
  1971.

\bibitem{MRV13}
S.~May, R.~Rannacher, and B.~Vexler.
\newblock {Error Analysis for a Finite Element Approximation of Elliptic
  Dirichlet Boundary Control Problems.}
\newblock {\em SIAM J. Control Optim.}, 51(3):2585--2611, 2013.

\bibitem{MR10}
V.~G. Maz'ya and J.~Rossmann.
\newblock {\em {Elliptic Equations in Polyhedral Domains.}}
\newblock AMS, Providence, R.I., 2010.

\bibitem{MW12}
M.~Melenk and B.~I. Wohlmuth.
\newblock {Quasi-Optimal Approximation of Surface Based Lagrange Multipliers in
  Finite Element Methods.}
\newblock {\em SIAM J. Numer. Anal.}, 50(4):2064--2087, 2012.

\bibitem{Mik12}
S.~E. Mikhailov.
\newblock {Traces, extensions, co-normal derivatives and solution regularity of
  elliptic systems with smooth and non-smooth coefficients}.
\newblock {\em J. Math. Analysis Appl.}, 378:324--342, 2012.

\bibitem{NP94}
S.~A. Nazarov and B.~A. Plamenevskij.
\newblock {\em {Elliptic Problems in Domains with Piecewise Smooth
  Boundaries}}.
\newblock De Gruyter, Berlin, 1994.

\bibitem{OPS13}
G.~Of, T.~X. Phan, and O.~Steinbach.
\newblock An energy space finite element approach for elliptic {D}irichlet
  boundary control problems.
\newblock {\em Numer. Math.}, 129(4):723--748, 2015.

\bibitem{Pfe}
J.~Pfefferer.
\newblock {\em {Numerical analysis for elliptic Neumann boundary control
  problems on polygonal domains}}.
\newblock PhD thesis, Universit\"at der Bundeswehr M\"unchen, 2014.

\bibitem{PW17}
J.~Pfefferer and M.~Winkler.
\newblock Finite element error estimates for normal derivatives on boundary
  concentrated meshes, 2018.

\bibitem{QV99}
A.~Quarteroni and A.~Valli.
\newblock {\em Domain Decomposition Methods for Partial Differential
  Equations}.
\newblock Numerical Mathematics and Scientific Computing. Clarendon Press,
  1999.

\bibitem{Ste01}
O.~{Steinbach}.
\newblock {On the stability of the $L_2$ projection in fractional Sobolev
  spaces.}
\newblock {\em {Numer. Math.}}, 88(2):367--379, 2001.

\bibitem{Win15}
M.~Winkler.
\newblock {\em {Finite Element Error Analysis for Neumann Boundary Control
  Problems on Polygonal and Polyhedral Domains}}.
\newblock PhD thesis, Universit\"at der Bundeswehr M\"unchen, 2015.

\bibitem{XZ97}
J.~Xu and S.~Zhang.
\newblock Preconditioning the {P}oincar\'e-{S}teklov operator by using
  {G}reen's function.
\newblock {\em Math. Comp.}, 66(217):125--138, 1997.

\end{thebibliography}
\end{document}